\protected\def\ignorethis#1\endignorethis{}
\let\endignorethis\relax
\def\TOCstop{\addtocontents{toc}{\ignorethis}}
\def\TOCstart{\addtocontents{toc}{\endignorethis}}
\tikzstyle{vertex}=[circle, draw, inner sep=0pt, minimum size=6pt]
\newcommand{\vertex}{\node[vertex]}
\newcommand{\IR}{{\mathbb{R}}}
\newcommand{\IN}{{\mathbb{N}}}
\newcommand{\IZ}{{\mathbb{Z}}}
\newcommand{\eChar}{\begin{enumerate}[(i)]}
\newcommand{\eCharR}{\begin{enumerate}[(a)]}
\newcommand{\eBr}{\begin{enumerate}[(1)]}
\newcommand{\Deg}{\operatorname{Deg}}
\newcommand{\diam}{\operatorname{diam}}
\newcommand{\supp}{\operatorname{supp}}
\newcommand{\eps}{\varepsilon}
\newcommand{\Abstract}
\newcommand{\Hm}[1]{\leavevmode{\marginpar{\tiny%
$\hbox to 0mm{\hspace*{-0.5mm}$\leftarrow$\hss}%
\vcenter{\vrule depth 0.1mm height 0.1mm width \the\marginparwidth}%
\hbox to 0mm{\hss$\rightarrow$\hspace*{-0.5mm}}$\\\relax\raggedright
#1}}}
\newcommand{\eat}[1]{}
\title
{
Ollivier Ricci curvature for general graph Laplacians: Heat equation, Laplacian comparison, non-explosion and diameter bounds 
}
\author{Florentin M\"unch, Rados{\l}aw K. Wojciechowski}
\date{\today}
\theoremstyle{plain}
\newtheorem{lemma}{Lemma}[section]
\newtheorem{theorem}[lemma]{Theorem}
\newtheorem{proposition}[lemma]{Proposition}
\newtheorem{corollary}[lemma]{Corollary}
\theoremstyle{definition}
\newtheorem{example}[lemma]{Example}
\newtheorem{rem}[lemma]{Remark}
\newtheorem{defn}[lemma]{Definition}
\numberwithin{equation}{section}
\begin{document}

\maketitle





\pagestyle{plain}

\begin{abstract}
Discrete time random walks on a finite set naturally translate via a one-to-one correspondence to discrete Laplace operators.
Typically, Ollivier curvature has been investigated via random walks. 
We first extend the definition of Ollivier curvature to general weighted graphs and then 
give a strikingly simple representation of Ollivier curvature using the graph Laplacian.
Using the Laplacian as a generator of a continuous time Markov chain, we connect Ollivier curvature with the heat equation which is strongly related to continuous time random walks. In particular, we prove that a lower bound on the Ollivier curvature is equivalent to a certain Lipschitz decay of solutions to the heat equation. 
This is a discrete analogue to a celebrated Ricci curvature lower bound characterization by Renesse and Sturm.
Our representation of Ollivier curvature via the Laplacian allows us to deduce a Laplacian comparison principle by which we prove non-explosion and improved diameter bounds.
\end{abstract}

\tableofcontents

\section{Introduction}

Ricci curvature is strongly related to the heat equation.
In particular, lower Ricci curvature bounds can be characterized via gradient estimates as in the following theorem by Renesse and Sturm
(see \cite[Theorem~1.3 and Corollary~1.4]{renesse2005transport}). 
	
 \begin{theorem}[Ricci curvature and gradient estimates]\label{thm:Sturm}
For any smooth connected Riemannian manifold $M$ and any $K \in \IR$ the following properties are equivalent:
\begin{enumerate}
\item[(i)]
 $Ric(M) \geq K$.
 \item[(ii)] 
  For all $f \in C_c^{\infty}(M)$ and all $t > 0$
 $$
 \|\nabla P_t f \|_\infty \leq e^{-Kt} \|\nabla f \|_\infty.
 $$ 
 \item[(iii)]
 For all bounded $f \in C^{Lip}(M)$ and all $t>0$
 $$
 Lip(P_t f) \leq e^{-Kt} Lip(f).
 $$
 \item[(iv)] 
 For all $x,y \in M$ and all $t>0$
 \begin{align*}
 W(p^x_t,p^y_t) \leq e^{-Kt} d(x,y)
 \end{align*}
\end{enumerate}
where $P_t$ denotes the heat semigroup generated by the Laplace-Beltrami operator, $p_t^x$ denotes the heat kernel and $W$ denotes the $L_1$-Wasserstein distance.
\end{theorem}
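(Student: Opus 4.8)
The plan is to establish the cycle of implications (i) $\Rightarrow$ (ii) $\Rightarrow$ (iii) $\Leftrightarrow$ (iv) and then close the loop with (iii) $\Rightarrow$ (i). The conceptual backbone is the Bakry--\'Emery $\Gamma_2$-calculus built on Bochner's formula for the forward direction, and Kantorovich--Rubinstein duality for the Wasserstein reformulation.

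For (i) $\Rightarrow$ (ii), I would run the standard semigroup interpolation argument. Fix $f \in C_c^\infty(M)$ and $t > 0$, set $u_s := P_{t-s}f$, and define $\phi(s) := P_s |\nabla u_s|^2$ on $[0,t]$. Differentiating and using $\partial_s P_s = \Delta P_s$ together with $\partial_s u_s = -\Delta u_s$ yields
\[
\phi'(s) = P_s\!\left(\Delta |\nabla u_s|^2 - 2\langle \nabla u_s, \nabla \Delta u_s\rangle\right) = 2\,P_s\,\Gamma_2(u_s),
\]
where $\Gamma_2(u) = \tfrac12\Delta|\nabla u|^2 - \langle \nabla u,\nabla\Delta u\rangle$. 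Bochner's formula gives $\Gamma_2(u) = |\nabla^2 u|^2 + Ric(\nabla u,\nabla u) \geq K|\nabla u|^2$, so $\phi'(s) \geq 2K\phi(s)$, whence $e^{-2Ks}\phi(s)$ is nondecreasing. Comparing the endpoints $s=0$ and $s=t$ produces the pointwise estimate $|\nabla P_t f|^2 \leq e^{-2Kt} P_t|\nabla f|^2$, and taking the supremum over $M$, using that $P_t$ is a contraction on $L^\infty$, gives (ii).

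The two soft equivalences come next. For (ii) $\Rightarrow$ (iii) I would approximate a bounded $f \in C^{Lip}(M)$ by smooth functions with controlled gradient, noting $Lip(g) = \|\nabla g\|_\infty$ for smooth $g$, and pass to the limit; the reverse inclusion $C_c^\infty(M) \subset C^{Lip}(M)$ makes (iii) $\Rightarrow$ (ii) immediate. For (iii) $\Leftrightarrow$ (iv) I would invoke the dual formulation
\[
W(p_t^x,p_t^y) = \sup\{P_t f(x) - P_t f(y) : Lip(f) \leq 1\},
\]
which holds because $P_t f(z) = \int f\,dp_t^z$. Condition (iii) restricted to $Lip(f)\le 1$ is then precisely a bound on the supremand, so reading the supremum converts each statement into the other.

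The genuinely hard step is the converse (iii) $\Rightarrow$ (i): recovering the infinitesimal Ricci bound from a global semigroup estimate. Here I would Taylor-expand in $t$ at $t = 0^+$. Using $P_t f = f + t\Delta f + \tfrac{t^2}{2}\Delta^2 f + o(t^2)$, one expands $|\nabla P_t f|^2$ to second order; the estimate $\|\nabla P_t f\|_\infty^2 \le e^{-2Kt}\|\nabla f\|_\infty^2$ then forces the pointwise Bochner inequality $\Gamma_2(f) \geq K|\nabla f|^2$ at a point where $|\nabla f|$ concentrates. The delicate part is the localization: for a prescribed unit tangent vector at a point $x_0$ one must construct test functions whose gradient at $x_0$ is that vector while the Hessian term $|\nabla^2 f(x_0)|^2$ is made to vanish, so that Bochner isolates $Ric(\nabla f,\nabla f) \geq K|\nabla f|^2$ and hence $Ric(M)\geq K$. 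Making the short-time asymptotics uniform enough to survive the supremum over $M$ is where the main analytic care is required.
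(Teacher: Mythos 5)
This theorem is not proved in the paper at all: it is quoted from von Renesse and Sturm \cite{renesse2005transport} purely as motivation for the discrete analogue, Theorem~\ref{thm:gradientGraphs}, so there is no in-paper argument to measure yours against. Judged on its own terms, your outline follows the classical Bakry--\'Emery route, and the forward chain is essentially right: the interpolation $\phi(s)=P_s|\nabla P_{t-s}f|^2$ together with Bochner's inequality yields the pointwise bound $|\nabla P_tf|^2\le e^{-2Kt}P_t|\nabla f|^2$, which is stronger than (ii), and the passage (iii) $\Leftrightarrow$ (iv) by Kantorovich--Rubinstein duality is the same mechanism the paper uses for the equivalence of its conditions (3) and (4) in the graph setting. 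One caveat on the forward direction: on a non-compact $M$ the differentiation of $s\mapsto P_s|\nabla u_s|^2$ and the commutation of $P_s$ with $\Delta$ require justification (completeness and integrability enter here), and you pass over this silently.

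The genuine gap is in (iii) $\Rightarrow$ (i), which in your write-up is a plan rather than a proof. From the global sup-norm estimate you must extract a pointwise second-order inequality at a prescribed point $x_0$ in a prescribed direction $v$; this needs test functions $f$ with $\nabla f(x_0)=v$ and $\nabla^2f(x_0)=0$ whose gradient norm attains its global maximum at $x_0$, plus uniform-in-$x$ control of the $o(t^2)$ error so that the supremum over $M$ and the limit $t\to0^+$ can be interchanged. You correctly name this as the delicate step but do not execute it, and it is exactly where \cite{renesse2005transport} does the real work. It is also worth noting, as a point of contrast, that the paper's proof of the graph analogue avoids $\Gamma_2$-calculus entirely: because the curvature is characterized directly through the Laplacian by $\kappa(x,y)=\inf\nabla_{xy}\Delta f$ (Theorem~\ref{thm:nablaDelta}), the converse direction there reduces to a first-order Taylor expansion of $t\mapsto\nabla_{yx}P_tf$ at $t=0$ with no Hessian term to eliminate, while the forward direction is run through a maximum principle for a cutoff semigroup rather than a commutation identity.
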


We prove that the same holds true on graphs (see Theorem~\ref{thm:gradientGraphs}). 
To do so, we employ a new method intertwining the heat semigroup with a cutoff function, which we call the perpetual cutoff method.
Our curvature notion will be a generalized Ollivier curvature based on its modification by Lin, Lu and Yau (see \cite{lin2011ricci}) which we extend to the case of general graph Laplacians.  In particular, we now apply this curvature notion to graphs with unbounded vertex degree.
For an introduction to Ollivier curvature, see \cite{ollivier2007ricci, ollivier2009ricci}.  
A relation between curvature and the number of triangles in a graph is given in \cite{jost2014ollivier}.
Methods to compute the curvature can be found in \cite{loisel2014ricci}.
Ollivier curvature has been applied to describe the internet topology \cite{ni2015ricci,wang2016interference}, wireless network theory \cite{wang2014wireless}, economic market analysis \cite{sandhu2016ricci} and cancer networks \cite{sandhu2015graph,tannenbaum2015ricci,sandhu2015analytical}.

The breakthrough paper of Renesse and Sturm mentioned above motivated a generalization of Ollivier curvature to semiproups compatible with Lipschitz functions and Wasserstein metrics. Approaches in this direction have been made in \cite{bass1981markov,joulin2007poisson,joulin2009new,veysseire2012coarse}). However, the problem suggested by Ollivier (see Problem~D in \cite{ollivier2010survey}), namely, if a lower curvature bound implies non-explosion (also known as stochastic completeness), is still open.  
Non-explosion in this context means that the process remains in the state space for all time. 
We prove non-explosion for all locally finite graph Laplacians with Ollivier curvature decaying not faster than $-\log  R$ (see Theorem~\ref{thm:StochComplete}). Therefore, this theorem can be seen as an initial step to solve Ollivier's problem in a general setting. Moreover, we prove that the curvature decay rate $-\log R$ is optimal.

One of the main observations of our paper is that, on graphs, the limit expression for Lin, Lu and Yau's Ollivier curvature simplifies to the limit-free expression
\begin{align*}
\kappa(x,y) = \inf_{\substack {f\in Lip(1) \\ \nabla_{yx}f=1}}  \nabla_{xy} \Delta f
\end{align*}
where $\nabla_{xy} f = \frac{f(x)-f(y)}{d(x,y)}$, $d$ is the usual combinatorial graph distance, $Lip(1)$ denotes the functions with Lipschitz constant 1 with respect to this metric and $\Delta$ is the graph Laplacian (see Theorem~\ref{thm:nablaDelta}).
Furthermore, the curvature admits another limit-free expression in terms of transport costs (see Proposition~\ref{pro:CharTransport}) which simplifies to give an explicit formula in the case of combinatorial graph (see Theorem~\ref{thm:TransprtCombinatorial}). 
These simplifications give the starting point for proving the semigroup characterization of a lower Ricci curvature bound in analogy to the work of Renesse and Sturm.
Our gradient estimates for the continuous time heat equation seem to be the first result of this kind for Ollivier curvature.
Indeed, these gradient estimates have been previously used as a globally defined Wasserstein curvature bound (see \cite[Definition~2.1]{joulin2007poisson}).

In contrast to Ollivier curvature, there are various gradient estimates on graphs under Bakry-Emery curvature bounds \cite{horn2014volume,gong2015properties,munch2014li,lin2015equivalent,bauer2015li} and under entropic Ricci curvature \cite[Theorem~3.1]{erbar2016poincar}.
Using a modification of Ollivier curvature, gradient estimates have been established for continuous time Markov processes in \cite{veysseire2012coarse}. In Section~\ref{sec:MarkovProcesses}, we prove that this modification coincides with our curvature notion on locally finite graphs with a lower curvature bound (see Corollary~\ref{cor:MarkovChains}).

The control of the Lipschitz constant of the semigroup yields stochastic completeness
 for all graphs with a constant lower Ollivier curvature bound (see Lemma~\ref{lem:StochComplete}).  However, as already mentioned above, one can get even better results by employing different techniques.
To do so, we first establish a Laplacian comparison principle which seems to be the first of its kind for any discrete Ricci curvature notion. 
The simplest version (Theorem~\ref{thm:LaplaceCompare}) states that under the assumption of a lower curvature bound $K \in \IR$, we have
\begin{align*}
\Delta d(x_0,\cdot) \leq \Deg(x_0) -Kd(x_0,\cdot)
\end{align*}
where $\Deg(x_0)$ is the weighted vertex degree of a fixed vertex $x_0$.
This Laplacian comparison can be extended to the case of decaying curvature (see Theorem~\ref{thm:LaplaceComparisonNonConst}).
Via the Laplacian comparison, we compare the curvature of a given graph to the curvature of the birth-death chain associated to the graph (see Corollary~\ref{cor:CurvCompare}). 
Birth-death chains are Markov processes on weighted path graphs. For our purposes, we identify the birth-death chain with its associated weighted path graph, see Section~\ref{sec:OllivierBirthDeath}.
The comparison to birth-death chains allows us to reduce many problems to the case of weighted path graphs where the curvature can be easily calculated (see Theorem~\ref{thm:line}).

Using the Laplacian comparison principle and Khas'minskii's criterion (see \cite{huang2011stochastic}), we will prove stochastic completeness under the assumption that the Ollivier curvature does not decay to $-\infty$ faster than $-\log R$ (see Theorem~\ref{thm:StochComplete}).
This result is remarkable when comparing to known stochastic completeness results for graphs which use the Bakry-Emery curvature instead and require a constant lower bound
as well as additional assumptions, such as a non-local completeness condition and a lower bounded vertex measure (see \cite[Theorem~1.2]{hua2017stochastic}).
As such, this article may also give inspiration to transfer the new methods presented here to discrete Bakry-Emery theory.

 As another application of the Laplacian comparison principle, we can prove finiteness and improved diameter bounds. 
  For diameter bounds under uniformly positive Ollivier curvature, see \cite{lin2011ricci, ollivier2009ricci}. Diameter bounds under a positive average Ollivier curvature can be found in \cite{paeng2012volume}. Diameter bounds under uniformly positive Bakry-Emery curvature are proven in \cite{liu2016bakry,fathi2015curvature,horn2014volume}.
 In this article we show that if the vertex degree is bounded and the curvature decays not faster than $1/R$, then the graph is finite (see Theorem~\ref{thm:ImprovedDiamBound} and Corollary~\ref{cor:finite}). 
Surprisingly,  uniformly positive Ricci curvature alone does not imply finiteness (see Example~\ref{Ex:positiveCurvInfiniteDiam}). However, if we additionally assume a lower bound on the vertex measure, then uniformly positive Ricci curvature indeed implies finiteness (see Corollary~\ref{cor:FiniteDiamBoundedMeasure}).

\subsection{Discussion and comparison to manifolds}

The reader familiar with the manifold case might be surprised at the optimal curvature decay rates: $- \log R$ for stochastic completeness and $1/R$ for compactness in the case of graphs.

In \cite[Theorem~15.4]{grigor1999analytic}, it is shown that, for manifolds, the optimal curvature decay rate for stochastic completeness is $-R^2$ which was proven in \cite{varopoulos1983potential, hsu1989heat}.
One tempting explanation for the discrepancy of the decay rate between manifolds and graphs is the choice of the metric. Frequently, intrinsic metrics introduced in \cite{frank2014intrinsic} are used to describe the geometry of graphs with unbounded vertex degrees and to give analogues to results on manifolds (see, for example, \cite{bauer2012cheeger,huang2013note,folz2015volume,keller2015intrinsic}).  
However, we give an example of a stochastically incomplete graph with curvature decaying like $-(\log \sigma)^{1+\eps}$ with respect to an intrinsic metric $\sigma$, even if the curvature is defined by incorporating the intrinsic metric, see Example~\ref{ex:incompleteIntrinsic}.

The optimal decay rate on manifolds to guarantee compactness is $C/R^2$ for some constant $C$. Interestingly, for $C>\frac{n-1}4$, compactness holds, but for $C \leq \frac{n-1}4$, non-compact manifolds are known with the corresponding Ricci curvature decay (see \cite{cheeger1982finite,holcman2005boundary}).
The discrepancy of the decay rate between manifolds and graphs here also cannot be explained via intrinsic metrics since we assume a bounded vertex degree for our result and, therefore, the combinatorial distance is intrinsic up to a factor. Hence, it might be interesting to ask for the deeper reasons for these two discrepancies.

Before introducing the setup and notations, we give a brief summary of the subsequent sections.
In Section~\ref{sec:Ollivier}, we prove the limit-free simplifications of the curvature formula and compute the curvature of combinatorial graphs and birth-death chains. 
In Section~\ref{sec:GradEstimates}, we introduce the perpetual cutoff method and non-linear cutoff semigroups which turn out to perfectly intermesh with Ollivier curvature, yielding the desired gradient estimate for the semigroup.
In Section~\ref{sec:LaplaceCompare}, we present the Laplacian comparison theorem and, as applications, we prove a birth-death chain reduction, stochastic completeness and improved diameter bounds which lead to our finiteness results.
Finally in Section~\ref{sec:MarkovProcesses}, we prove that on graphs with a lower curvature bound, our curvature notion coincides with the curvature introduced in \cite{veysseire2012coarse}.

\subsection{Setup and Notation}

A triple $G=(V,w,m)$ is called a \emph{graph} if $V$ is a countable set, $w:V^2 \to [0,\infty)$ is symmetric and zero on the diagonal and $m:V \to (0,\infty)$. In the following, we only consider \emph{locally finite} graphs, i.e., for every $x \in V$ there are only finitely many $y \in V$ with $w(x,y) >0$.
We call $V$ the \emph{vertex set} with elements of $V$ called \emph{vertices}, $w$ the \emph{edge weight} and $m$ the \emph{vertex measure}. 
We will write $x \sim y$ if $w(x,y)>0$ and say that $(x,y)$ is an \emph{edge} in the graph.
We say that $G$ is a \emph{combinatorial graph} if $w(x,y) \in \{0,1\}$ for all $x,y \in V$ and if $m \equiv 1$.

We define the \emph{graph Laplacian} $\Delta: \IR^V \to \IR^V$ via
$$ \Delta f(x) := \frac 1 {m(x)} \sum_{y\in V} w(x,y)(f(y) - f(x)). $$ 
We define the function spaces
\begin{align*}
C(V)&:=\{f:V \to \IR\}= \IR^V, \\
\ell_\infty(V)&:=\{f \in C(V): f \mbox{ is bounded}\}, \\
C_c(V) &:= \{f \in C(V): f \mbox{ is finitely supported}\},
\end{align*}
all endowed with the supremum norm $\|\cdot \|_\infty$. 
We let $\ell^p(V,m)$ denote the $\ell^p$ spaces with respect to $m$, that is,
$\ell^p(V,m) = \{ f \in C(V) : \sum_{x \in V} |f(x)|^p m(x) <\infty\}.$ 
We let 
$$\Deg(x) := \frac1  {m(x)} \sum_{y \in V} {w(x,y)}$$ 
denote the \emph{vertex degree} and let $\Deg_{\max} := \sup_x \Deg(x) \in (0,\infty]$. We remark that the Laplace operator is bounded on $\ell_\infty(V)$ and $\ell^p(V,m)$ for any $p\geq 1$ if and only if $\Deg_{\max} < \infty$  (see \cite[Theorem 11]{keller2010unbounded}, \cite[Theorem 9.3]{haeseler2011laplacians}).
In this case, we will say that $G$ has \emph{bounded vertex degree}.

For a non-negative $f \in \ell_\infty(V)$, we denote by $P_t f$ the smallest  non-negative bounded continuous solution $u(x,t)$ to the heat equation 
\begin{align*}
\begin{cases}
\begin{tabular}{rll}
$\Delta u(x,t)$  & $=\partial_t u(x,t)$ & $x \in V$, $t\geq0$ \\
$u(x,0)$  & $= f(x)$ &$x \in V$.
\end{tabular}
\end{cases}
\end{align*}
A proof of the existence and uniqueness of $P_t f$ and further details can be found in \cite{wojciechowski2008heat, keller2012dirichlet}.  Note, in particular, that $P_{s+t} f = P_s P_t f$ which is referred to as the \emph{semigroup property} and that $P_t$ is \emph{positivity preserving}, i.e., $P_t f\geq0$ for $f\geq0$.  
A graph is called \emph{stochastically complete} or \emph{non-explosive} if $P_t \mathbf 1 =  \mathbf 1$  for all $t>0$ where $\mathbf 1$ is the constant  function which is 1 on $V$. 

We define the \emph{combinatorial graph distance} $d$ on $V\times V$ via
$d(x,y):= \inf\{n:x=x_0\sim \ldots \sim x_n=y\}$. A graph is said to be \emph{connected} if $d(x,y) < \infty$ for all $x,y$ in $V$. We will always assume that graphs are connected.  We write $B_r(x) = \{ y \in V \ | \ d(x,y) \leq r \}$ and $S_r(x) = \{ y \in V \ | \ d(x,y)=r \}$.
We note that $G$ is connected if and only if $P_t$ is a positivity improving semigroup, that is, $P_t f >0$ if $f \geq 0$ for $t>0$, see \cite{keller2012dirichlet}.

We write $f \in Lip(1)$ if $|f(x)-f(y)| \leq d(x,y)$ for all $x,y \in V$.
The \emph{Wasserstein distance} $W(\mu,\nu)$ for probability measures $\mu$ and $\nu$ on $V$ is given by
\begin{align*}
W(\mu,\nu) &:= \sup_{f \in Lip(1) \cap \ell_\infty(V)} \int f d\mu - \int f d\nu \\
&= \sup_{f \in Lip(1) \cap \ell_\infty(V)} \sum_{x \in V} f(x) (\mu(x) - \nu(x)).
\end{align*}
We note that the supremum is well defined due to the boundedness of the functions and that it suffices to take the supremum over functions in $Lip(1)$ when the measures are finitely supported.
Equivalently (see e.g.  \cite[Theorem 1.14]{villani2003topics}), the Wasserstein metric can be defined as 
$$
W(\mu,\nu) := \inf_{\rho} \sum_{x,y \in V} \rho(x,y) d(x,y)
$$
where the infimum is taken over all $\rho: V^2 \to [0,1]$ which satisfy $\sum_{y \in V} \rho(x,y) = \mu(x)$ and $\sum_{x \in V} \rho(x,y) = \nu(y)$ for all $x,y \in V$.  We call such a $\rho$ a \emph{coupling} between $\mu$ and $\nu$.

\section{Ollivier curvature and graph Laplacians}\label{sec:Ollivier}
Ollivier curvature is a powerful and easy to calculate tool used to study analytic and geometric properties of a space. Until now, Ollivier curvature for graphs has only been used in the case of bounded Laplacians. In this section, we extend the definition of Ollivier curvature to the case of unbounded graph Laplacians.  We then present a strikingly simple expression for calculating the curvature using only the Laplacian as well as a formula involving transport costs.  Along the way, we illustrate how to calculate the curvature in a variety of situations including graphs without cycles, combinatorial graphs and birth-death chains.

\bigskip

For $\eps>0$, we let
\begin{align*}
m_x^\eps(y) := 1_y(x) + \eps \Delta 1_y (x)
\end{align*}
which is a finitely supported probability measure and, in particular, non-negative if $\eps$ is sufficiently small. 
This can be seen as
\begin{align*}
m_x^\eps (y) = \begin{cases}
1 - \eps \Deg(x) &: y=x \\
\eps w(x,y)/m(x) &: \mbox{otherwise}.
\end{cases}
\end{align*}
In particular,
\begin{align*}
\int f dm_x^\eps = \sum_{y \in V} f(y) m_x^\eps(y) = (f + \eps \Delta f)  (x).
\end{align*}
We remark that $m_x^\eps$ can be seen as a first order approximation to the heat kernel $P_\eps 1_x$.  This connection will be further explored in Section~\ref{sec:MarkovProcesses}.

In the case of the normalized Laplacian, that is, when $w: V^2 \to \{ 0,1\}$ and $m(x) = d_x := \# \{y \sim x \}$, for $\alpha := 1-\eps$ one has
\begin{align*}
m_x^\eps (y) = \begin{cases}
\alpha &: y=x \\
(1-\alpha)/d_x&:  y \sim x \\
0 &: \mbox{otherwise}
\end{cases}
\end{align*}
which corresponds to the definition of Lin, Lu and Yau (see \cite{bauer2011ollivier,lin2011ricci}).  Note that, in this case, $\Deg=1$ so that the normalized Laplacian is always a bounded operator.

Following the standard definition, we let, for $x \not = y$
\begin{align*}
\kappa_\eps(x,y) := 1 - \frac{W(m_x^\eps,m_y^\eps)}{d(x,y)}
\end{align*}
where $W$ denotes the Wasserstein distance.
In \cite{bourne2017ollivier} it is shown that for the normalized Laplacian and $x \sim y$, the function $\kappa_\eps(x,y)$ is concave  and piecewise linear in $\eps \in [0,1]$ with at most three linear parts.
Analogous to Lin, Lu and Yau, one can prove the existence of
\begin{align*}
\kappa(x,y) := \lim_{\eps \to 0^+} \frac 1 \eps \kappa_\eps(x,y)
\end{align*}
by which we extend Lin, Lu and Yau's curvature definition to arbitrary graph Laplacians.

Using standard theory, the curvature $\kappa(x,y)$ is uniquely determined by the induced subgraph $B_1(x) \cup B_1(y)$ for $x \sim y$.
We write $Ric(G) \geq K$ if $\kappa(x,y) \geq K$ for all $x,y$.
We remark that to show $Ric(G) \geq K$ it suffices to show that $\kappa(x,y) \geq K$ for adjacent vertices $x \sim y$ as in \cite{lin2011ricci}.

As a first example, we mention that it is well-known that, in the case of the normalized Laplacian, Abelian Cayley graphs have non-negative Ollivier curvature (see e.g. \cite[Theorem~2]{lin2014ricci}).  We will give further examples later in this section.

\subsection{Bypassing the limit}

Our first aim is to express the curvature without the limit which turns out to be surprisingly simple. To do so, we introduce the notation of the gradient

$$
\nabla_{xy} f := 
\frac{f(x) - f(y)}{d(x,y)} 
$$

for $x\neq y \in V$ and $f \in C(V)$ and the associated Lipschitz constant
$$
\|\nabla f \|_\infty := \sup_{x \neq y} |\nabla_{xy} f| = \sup_{x \sim y} |\nabla_{xy} f| \in [0,\infty].
$$
For $K \geq 0$, we let $Lip(K) = \{f \in C(V):  \|\nabla f \|_\infty \leq K \},$ that is, the set of functions with Lipschitz constant $K$ or $K$-Lipschitz functions.
We are now prepared to present our limit-free curvature formula.
\begin{theorem}[Curvature via the Laplacian]\label{thm:nablaDelta}
Let $G=(V,w,m)$ be a graph and let $x \neq y$ be vertices. Then,
\begin{align*}
\kappa(x,y) = \inf_{\substack {f\in Lip(1) \cap C_c(V) \\ \nabla_{yx}f=1}}  \nabla_{xy} \Delta f.
\end{align*}
\end{theorem}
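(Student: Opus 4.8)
The plan is to unwind the definition of $\kappa_\eps$ through Kantorovich duality and then extract the first order term in $\eps$. Since $m_x^\eps$ and $m_y^\eps$ are finitely supported probability measures for $\eps$ small, I would use the dual form of the Wasserstein distance over $Lip(1)$ together with the identity $\int g\, dm_x^\eps = (g+\eps\Delta g)(x)$. Writing the supremum with $g(y)-g(x)$ in the lead (equivalently, replacing the test function by its negative) this gives
\begin{align*}
\frac{W(m_x^\eps,m_y^\eps)}{d(x,y)} = \sup_{g\in Lip(1)}\big[\nabla_{yx} g + \eps\,\nabla_{yx}\Delta g\big],
\end{align*}
and hence
\begin{align*}
\frac1\eps\kappa_\eps(x,y) = \inf_{g\in Lip(1)}\Big[\frac{1-\nabla_{yx} g}{\eps} - \nabla_{yx}\Delta g\Big].
\end{align*}
Since $\nabla_{yx}g\le 1$ for $g\in Lip(1)$, the bracket is a penalized objective in which the constraint $\nabla_{yx}g=1$ is enforced as $\eps\to 0^+$, and on the constraint set $-\nabla_{yx}\Delta g = \nabla_{xy}\Delta g$; this already explains the shape of the claimed formula.

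Next I would reduce to a finite dimensional problem. Because $g(x)$, $g(y)$, $\Delta g(x)$ and $\Delta g(y)$ depend only on the restriction of $g$ to the finite set $A:=B_1(x)\cup B_1(y)$, both $\nabla_{yx}g$ and $\nabla_{yx}\Delta g$ depend only on $g|_A$; conversely, every function that is $Lip(1)$ on $A$ extends to a global one via the McShane formula, so the infimum may be taken over $Lip(1)$ functions on $A$. Normalizing by $g(x)=0$, which changes neither gradient, yields a compact set $K\subset\IR^A$ on which $a(g):=1-\nabla_{yx}g\ge 0$ and $b(g):=\nabla_{yx}\Delta g$ are continuous, and $K_0:=\{g\in K: a(g)=0\}$ is nonempty (it contains the restriction of $d(y,\cdot)$).

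The main step, and the only real obstacle, is to justify interchanging the limit and the infimum, that is, to prove
\begin{align*}
\lim_{\eps\to0^+}\ \inf_{g\in K}\Big[\frac{a(g)}{\eps}-b(g)\Big] = \inf_{g\in K_0}\big(-b(g)\big).
\end{align*}
For the upper bound I would test with any $g\in K_0$, giving $F(\eps):=\inf_{g \in K}[a(g)/\eps-b(g)]\le -b(g)$ and hence $\limsup_{\eps\to0}F(\eps)\le\inf_{K_0}(-b)$. For the lower bound I would use compactness: from $a\ge0$ we get $F(\eps)\ge\inf_K(-b)>-\infty$, the infimum defining $F(\eps)$ is attained at some $g_\eps\in K$, and along any sequence $\eps_n\to0$ a convergent subsequence $g_{\eps_n}\to g^*$ exists. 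Boundedness of $F(\eps_n)$ and of $b$ forces $a(g_{\eps_n})/\eps_n$ to remain bounded, so $a(g^*)=0$, i.e. $g^*\in K_0$; then $F(\eps_n)\ge -b(g_{\eps_n})\to -b(g^*)\ge\inf_{K_0}(-b)$. This gives $\liminf_{\eps\to0}F(\eps)\ge\inf_{K_0}(-b)$, shows the limit exists (so $\kappa(x,y)$ is well defined), and identifies it with $\inf_{K_0}(-b)=\inf_{g\in Lip(1),\,\nabla_{yx}g=1}\nabla_{xy}\Delta g$.

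Finally I would upgrade the constraint from $Lip(1)$ to $Lip(1)\cap C_c(V)$. Given $g$ that is $Lip(1)$ on $A$ with $\nabla_{yx}g=1$, I would shift it by a constant so that $\max_A g=0$, extend by McShane to $\tilde g(z)=\min_{a\in A}[g(a)+d(a,z)]$, and set $f=\max\big(\min_A g,\ \min(\tilde g,0)\big)$. This $f$ is $Lip(1)$ as a max and min of $Lip(1)$ functions with constants, it agrees with $g$ on $A$ (so both gradients are unchanged), and it vanishes whenever $d(z,A)>|\min_A g|$; since bounded sets in a locally finite connected graph are finite, $f\in C_c(V)$. Thus every competitor in the unconstrained infimum is matched by one in $Lip(1)\cap C_c(V)$ of the same value, so the two infima coincide and the theorem follows.
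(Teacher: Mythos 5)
Your proposal is correct and follows essentially the same route as the paper's proof (Kantorovich duality to get the penalized objective $\frac{1}{\eps}(1-\nabla_{yx}g)+\nabla_{xy}\Delta g$, a compactness argument to interchange the limit and the infimum, and an extension/truncation to pass to $Lip(1)\cap C_c(V)$); the one genuine refinement is that your lower-bound argument derives the existence of $\lim_{\eps\to0^+}\frac1\eps\kappa_\eps(x,y)$ rather than invoking it to force $\nabla_{yx}g_\eps\to1$, which makes the step self-contained. One small slip: your witness for $K_0\neq\emptyset$ should be (the normalization of) $d(x,\cdot)$, e.g.\ $g=d(x,y)-d(y,\cdot)$, since $\nabla_{yx}\,d(y,\cdot)=-1$, not $1$.
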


\begin{proof}
	By definition, one has

\begin{align*}
W(m_x^\eps,m_y^\eps) &=  \sup_{f \in Lip(1)} \sum_z f(z)(m_y^\eps(z) - m_x^\eps(z))\\ 
&= \sup_{f \in Lip(1)} [(f(y) + \eps \Delta f(y)) - (f(x) + \eps \Delta f(x))]\\
&= d(x,y)\sup_{f \in Lip(1)} \nabla_{yx}(f + \eps \Delta f).
\end{align*}
Hence,
\begin{align*}
\frac 1 \eps   \kappa_\eps(x,y) 
&= \frac 1 \eps  \left( 1- \frac{W(m_x^\eps,m_y^\eps)}{d(x,y)} \right) \\
&=  \frac 1 \eps  \left( \inf_{f \in Lip(1)} ( 1- \nabla_{yx}(f + \eps \Delta f) ) \right)\\
& = \inf_{f \in Lip(1)}  \left(\frac 1 \eps(1- \nabla_{yx}f)    +\nabla_{xy} \Delta f \right)\\
&\leq  \inf_{\substack {f\in Lip(1) \cap C_c(V) \\ \nabla_{yx}f=1}}  \nabla_{xy} \Delta f 
\end{align*}

To prove the other inequality, we first show the existence of a minimizer $f_\eps \in Lip(1) \cap C_c(V)$ of the expression $\frac 1 \eps(1- \nabla_{yx}f)    +\nabla_{xy} \Delta f$ found above for every $\eps >0$ satisfying $f_\eps(x)=0$.
This follows as, for every $f \in Lip(1)$, we construct $\widetilde f \in Lip(1)$ supported on $B_{2r}(x)$ with $r:=d(x,y)+1$ which satisfies 
\begin{align}
\frac 1 \eps(1- \nabla_{yx}f)    +\nabla_{xy} \Delta f = \frac 1 \eps(1- \nabla_{yx}\widetilde f)    +\nabla_{xy} \Delta \widetilde f. \label{eq:fwidetildefMaxExist}
\end{align}
 By adding a constant to $f$, we can assume that $f(x)=0$.
This yields that $|f(z)|\leq r$ for all $z \in B_1(x) \cup B_1(y)$ since $f \in Lip(1)$.
Let $\phi:V \to \IR$ be given by 
\[
\phi(z) =  \left[r \wedge (2r - d(x,z)) \right]_+.
\] 
Observe that $\phi(z)=r$ for all $z \in B_1(x)\cup B_1(y)$.
Therefore,
$\widetilde f := -\phi \vee f \wedge \phi$
satisfies (\ref{eq:fwidetildefMaxExist}) as it agrees with $f$ on  $B_1(x)\cup B_1(y)$. Moreover, $\phi$ and thus $\widetilde f$ are supported on $B_{2r}(x)$.
This construction of $\widetilde f$ shows that we can restrict the infimum to functions supported on the compact set $B_{2r}(x)$ which yields the existence of a minimizer $f_\eps$ with $f_\eps(x)=0$ for all $\eps>0$ due to continuity.

Due to the compactness of $B_{2r}(x)$ and since $f_\eps(x)=0$ and $f_\eps \in Lip(1)$ for all $\eps>0$, there exists a sequence $\eps_n \to 0$ such that $f_0 := \lim_{n\to \infty} f_{\eps_n}$ exists.
Since $\frac 1 \eps \kappa_\eps(x,y) = \frac{1}{\eps} (1-\nabla_{yx}f_\eps)+\nabla_{xy}\Delta f_\eps$ and $\lim_{\eps \to 0^+} \frac 1 \eps \kappa_\eps(x,y)$ exists,
we get that $\nabla_{yx}f_\eps \to 1$ as $\eps \to 0^+$.
Therefore, $f_0 \in Lip(1) \cap C_c(V)$,  $\nabla_{yx}f_0 = 1$ and since $\nabla_{yx} f_\eps \leq 1$, we get 
\begin{align*}
\kappa(x,y) &= \lim_{\eps \to 0^+}   \frac 1 \eps(1- \nabla_{yx}f_\eps)    +\nabla_{xy} \Delta f_\eps \\
&\geq \lim_{n \to \infty} \nabla_{xy} \Delta f_{\eps_n} \\
& = \nabla_{xy} \Delta  f_0\\
&\geq \inf_{\substack {f\in Lip(1) \cap C_c(V) \\ \nabla_{yx}f=1}}  \nabla_{xy} \Delta f.
\end{align*} 

Putting together the upper and lower estimates yields
\begin{align*}
\kappa(x,y) = \inf_{\substack {f\in Lip(1) \cap C_c(V) \\ \nabla_{yx}f=1}}  \nabla_{xy} \Delta f
\end{align*}
as desired.	
\end{proof}

Following \cite[Lemma~2.2]{bhattacharya2015exact}, it suffices to optimize over all integer valued Lipschitz functions $f$ which yields the following corollary.

\begin{corollary}
Let $G=(V,w,m)$ be a graph and let $x \neq y$ be vertices. Then,
\begin{align*}
\kappa(x,y) = \inf_{\substack {f: B_1(x) \cup B_1(y) \to \IZ \\ f\in Lip(1)  \\ \nabla_{yx}f=1}}  \nabla_{xy} \Delta f
\end{align*}
Moreover, on combinatorial graphs, the curvature $\kappa(x,y)$ is integer valued for all $x\sim y$.
\end{corollary}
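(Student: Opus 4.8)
The plan is to start from the limit-free formula of Theorem~\ref{thm:nablaDelta}, namely $\kappa(x,y) = \inf_{f \in Lip(1) \cap C_c(V),\, \nabla_{yx}f=1} \nabla_{xy}\Delta f$, and to perform two reductions: first localize the infimum to functions on $A := B_1(x) \cup B_1(y)$, and then show that an integer-valued minimizer can be chosen. For the localization I would observe that $\Delta f(x)$ and $\Delta f(y)$ depend only on the values of $f$ on $B_1(x)$ and $B_1(y)$ respectively, so both the objective $\nabla_{xy}\Delta f = (\Delta f(x) - \Delta f(y))/d(x,y)$ and the constraint $\nabla_{yx}f=1$ involve only $f|_A$. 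It then remains to check that every $g : A \to \IR$ that is $Lip(1)$ on $A$ (i.e.\ $|g(u)-g(v)| \leq d(u,v)$ for $u,v \in A$) and satisfies $\nabla_{yx}g=1$ extends to some $f \in Lip(1) \cap C_c(V)$ agreeing with $g$ on $A$. I would take the McShane extension $\hat g(z) = \min_{a \in A}[g(a) + d(z,a)]$, which is globally $Lip(1)$ and equals $g$ on $A$, and then cut it off with the tent function $\phi(z) = [r \wedge (2r - d(x,z))]_+$ for $r = d(x,y)+1$ exactly as in the proof of Theorem~\ref{thm:nablaDelta}; after normalizing $g(x)=0$ one has $\phi \geq |\hat g|$ on $A$, so $f := -\phi \vee \hat g \wedge \phi$ is $Lip(1)$, supported on $B_{2r}(x)$, and unchanged on $A$. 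Hence the infimum over $Lip(1) \cap C_c(V)$ equals the infimum over $Lip(1)$ functions on $A$.

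For the integrality step, after normalizing $g(x)=0$ (the objective is invariant under adding constants, and then $\nabla_{yx}g=1$ reads $g(y)=d(x,y)$) the feasible set is the polytope $P = \{g : A \to \IR : g(x)=0,\ g(y)=d(x,y),\ |g(u)-g(v)| \leq d(u,v)\ \forall u,v \in A\}$. This set is compact, and the objective $g \mapsto \nabla_{xy}\Delta g$ is linear, so the minimum is attained at a vertex of $P$. The defining inequalities $g(u)-g(v) \leq d(u,v)$ form a system of difference constraints whose coefficient matrix is a submatrix of the incidence matrix of a directed graph and is therefore totally unimodular; since the combinatorial distance $d$ is integer-valued, every vertex of $P$ is integer-valued. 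This is exactly the mechanism invoked from \cite[Lemma~2.2]{bhattacharya2015exact}, and it produces the asserted formula with the infimum over integer-valued $f$ on $B_1(x) \cup B_1(y)$. I expect this to be the main obstacle, since it is the only place where more than elementary manipulation of Lipschitz functions is required; everything else is localization and extension.

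Finally, for the second assertion, on a combinatorial graph one has $m \equiv 1$ and $w \in \{0,1\}$, so for $x \sim y$ we have $d(x,y)=1$ and $\nabla_{xy}\Delta f = \Delta f(x) - \Delta f(y)$ with $\Delta f(x) = \sum_{z \sim x}(f(z)-f(x))$. For integer-valued $f$ this quantity is an integer, and since by the above the infimum is attained at such an $f$ (the polytope $P$ being compact), it follows that $\kappa(x,y) \in \IZ$.
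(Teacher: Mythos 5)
Your argument is correct and follows essentially the route the paper takes: the paper's own proof of the first assertion is simply a citation to \cite[Lemma~2.2]{bhattacharya2015exact}, and your localization via the McShane extension plus the tent-function cutoff, followed by the total-unimodularity/vertex-integrality argument for the difference-constraint polytope, supplies exactly the details that citation covers. The integrality of $\kappa(x,y)$ on combinatorial graphs is then the same one-line observation as in the paper, namely that $\nabla_{xy}\Delta f \in \IZ$ for integer-valued $f$ and $x \sim y$, combined with attainment of the infimum.
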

\begin{proof}
The proof of the first part follows \cite[Lemma~2.2]{bhattacharya2015exact}. For the integrality of the curvature in the case of combinatorial graphs, observe that $\nabla_{xy}\Delta f \in \IZ$ whenever $f$ is integer valued, $x\sim y$ and $\Delta$ is the combinatorial graph Laplacian.
\end{proof}
We now explicitly calculate the curvature of large girth graphs in our setting by using Theorem~\ref{thm:nablaDelta}. 
\begin{example}\label{ex:NoCycles}
Let $G=(V,w,m)$ be a graph and let $x\sim y$ be vertices.
Suppose that the edge $(x,y)$ is not contained in any 3-,4- or 5-cycles. Then, an optimal 1-Lipschitz function $f$ is given by an extension of
\[
f(z) = \begin{cases}
0 &: z \sim x \mbox{ and } z \neq y\\
1 &: z=x\\
2 &: z=y\\
3 &: z \sim y \mbox{ and } z \neq x
\end{cases}
\]
yielding the curvature
\[
\kappa(x,y) = 2w(x,y) \left(\frac 1{m(x)} + \frac 1 {m(y)} \right) - \Deg(x) - \Deg(y).
\]
\end{example}

\bigskip

We now give another limit-free expression of our extension of Lin-Lu-Yau's Ollivier curvature via transport costs.

\begin{proposition}[Curvature via transport cost]\label{pro:CharTransport}
Let $G=(V,w,m)$ be a graph and let $x_0 \neq y_0$ be vertices. Then,
\begin{align}
\kappa(x_0,y_0) &= \sup_{\rho}  \sum_{\substack{x \in B_1(x_0) \\ y \in B_1(y_0)}}\rho(x,y) \left[1 - \frac{d(x,y)}{d(x_0,y_0)}\right] \label{eq:PropTransport} 
\end{align}
where the supremum is taken over all $\rho: B_1(x_0) \times B_1(y_0) \to [0,\infty)$ such that
\begin{align}
\sum_{y \in B_1(y_0)} \rho(x,y) &= \frac {w(x_0,x)}{m(x_0)}  \qquad \mbox{ for all } x \in S_1(x_0) \mbox{ and} \label{eq:rhoXProp}\\
\sum_{x \in B_1(x_0)} \rho(x,y) &= \frac {w(y_0,y)}{m(y_0)} \qquad \mbox{ for all } y \in S_1(y_0)  \label{eq:rhoYProp}.
\end{align}
\end{proposition}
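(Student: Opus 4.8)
The plan is to recognize the right-hand side of (\ref{eq:PropTransport}) as the linear-programming dual of the limit-free formula from Theorem~\ref{thm:nablaDelta}. By that theorem, $\kappa(x_0,y_0)$ is an infimum of $\nabla_{x_0y_0}\Delta f$, and this quantity depends only on the values of $f$ on the finite set $B_1(x_0)\cup B_1(y_0)$ (here $G$ is locally finite); as in the proof of Theorem~\ref{thm:nablaDelta}, one may restrict the infimum to functions determined by their values there, so everything reduces to a finite-dimensional LP. Writing $d_0:=d(x_0,y_0)$ and normalizing $f(x_0)=0$, the constraint $\nabla_{y_0x_0}f=1$ forces $f(y_0)=d_0$, and expanding the Laplacian gives
$$\nabla_{x_0y_0}\Delta f = \frac{1}{d_0}\left[\sum_{x\in S_1(x_0)}\frac{w(x_0,x)}{m(x_0)}f(x) - \sum_{y\in S_1(y_0)}\frac{w(y_0,y)}{m(y_0)}\big(f(y)-d_0\big)\right].$$
The weights $w(x_0,x)/m(x_0)$ and $w(y_0,y)/m(y_0)$ appearing here are exactly the prescribed marginals (\ref{eq:rhoXProp}) and (\ref{eq:rhoYProp}), so the objective of Theorem~\ref{thm:nablaDelta} and the transport objective are genuinely dual linear functionals.

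First I would prove the easy inequality $\sup_\rho(\cdots)\le\kappa(x_0,y_0)$ by a direct weak-duality computation. For any feasible $\rho$ and any admissible $f$ (normalized as above), I would substitute the marginal identities into the displayed expression and regroup the result as a single double sum over $B_1(x_0)\times B_1(y_0)$. Since $\rho\ge 0$, it suffices to verify pointwise that the coefficient of $\rho(x,y)$ dominates $d_0-d(x,y)$; splitting into the four cases according to whether $x=x_0$ and/or $y=y_0$, each reduces immediately to the Lipschitz bound $|f(u)-f(v)|\le d(u,v)$ together with $f(x_0)=0$, $f(y_0)=d_0$. Dividing by $d_0$ and taking $\sup_\rho$ and $\inf_f$ then yields this direction.

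For the reverse inequality I would pass to the LP dual of the transport problem. Its dual variables $a_x$ ($x\in S_1(x_0)$) and $b_y$ ($y\in S_1(y_0)$) are unconstrained, the dual constraints are precisely $a_x+b_y\ge 1-d(x,y)/d_0$, $a_x\ge 1-d(x,y_0)/d_0$ and $b_y\ge 1-d(x_0,y)/d_0$, and the dual objective equals the expression above under $f(x)=d_0a_x$, $f(y)=d_0(1-b_y)$. The transport primal is feasible (place all row mass of $x\in S_1(x_0)$ on the free entry $\rho(x,y_0)$ and all column mass of $y\in S_1(y_0)$ on $\rho(x_0,y)$, the unconstrained center entries absorbing the mismatch between $\Deg(x_0)$ and $\Deg(y_0)$) and bounded, so strong duality applies and an optimal dual pair exists. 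Replacing it by its $c$-transform $a_x=\max\{1-d(x,y_0)/d_0,\ \max_{y\in S_1(y_0)}(1-d(x,y)/d_0-b_y)\}$, and symmetrically for $b$, leaves the optimal value unchanged, and I would then check that the associated $f$ is $1$-Lipschitz on $B_1(x_0)\cup B_1(y_0)$: Lipschitzness among same-side vertices holds because $a$ and $b$ are maxima of functions that are $(1/d_0)$-Lipschitz in their free variable, while $|f(x)-f(x_0)|\le d(x_0,x)$ and $|f(y)-f(y_0)|\le d(y_0,y)$ follow from dual feasibility together with the triangle inequality. Extending $f$ to a $Lip(1)\cap C_c(V)$ function by a McShane extension followed by the truncation used in the proof of Theorem~\ref{thm:nablaDelta} produces an admissible competitor with $\nabla_{x_0y_0}\Delta f$ equal to the dual optimum, hence to the transport supremum; thus $\kappa(x_0,y_0)\le\sup_\rho(\cdots)$.

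I expect the main obstacle to be exactly this reverse inequality, and within it the verification that the optimal dual potentials assemble into a genuine $1$-Lipschitz function on the two balls rather than merely satisfying the cross constraints $a_x+b_y\ge 1-d(x,y)/d_0$. This is where the $c$-transform structure and the triangle inequality are indispensable, and where the unconstrained center entries $\rho(x_0,\cdot)$, $\rho(\cdot,y_0)$, $\rho(x_0,y_0)$, which are what allow the two marginals to carry the different total masses $\Deg(x_0)$ and $\Deg(y_0)$, must be handled correctly.
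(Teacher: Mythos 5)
Your proposal is correct, but it takes a genuinely different route from the paper. The paper never leaves the $\eps$-picture: it writes $\frac1\eps\kappa_\eps(x_0,y_0)$ as a transport problem with the exact marginals $\frac1\eps 1_x(x_0)+\Delta 1_x(x_0)$ and $\frac1\eps 1_y(y_0)+\Delta 1_y(y_0)$ (obtained by rescaling the coupling by $1/\eps$), observes that on the spheres these are precisely \eqref{eq:rhoXProp}--\eqref{eq:rhoYProp}, and shows that any $\rho$ satisfying only the sphere constraints can be upgraded to satisfy the center constraints by dumping the excess mass $\frac1\eps-\sum\rho$ onto the zero-cost entry $(x_0,y_0)$; letting $\eps\to0^+$ then gives \eqref{eq:PropTransport} directly, with no appeal to Theorem~\ref{thm:nablaDelta}. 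You instead treat \eqref{eq:PropTransport} as the finite-dimensional LP dual of the limit-free formula of Theorem~\ref{thm:nablaDelta}: weak duality (which I checked — regrouping into a single sum using $f(x_0)=0$, $f(y_0)=d_0$ gives $d_0\nabla_{x_0y_0}\Delta f=\sum_{x,y}\rho(x,y)\left[f(x)-f(y)+d_0\right]\ge\sum_{x,y}\rho(x,y)\left[d_0-d(x,y)\right]$) handles one direction, and strong duality plus a $c$-transform handles the other. You correctly flag the real difficulty of your route: dual feasibility only yields the one-sided cross constraints $f(y)-f(x)\le d(x,y)$, and one must upgrade the optimal potentials to a genuine $1$-Lipschitz function on $B_1(x_0)\cup B_1(y_0)$. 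Your $c$-transform fix does work — the same-side Lipschitz bounds, the bounds $|f(x)-f(x_0)|\le d(x_0,x)$, and the reverse cross inequality $f(x)-f(y)\le d(x,y)$ all reduce to triangle inequalities once \emph{both} potentials are $c$-transforms (so you should perform the transforms in sequence, $b':=a^c$ then $a':=(b')^c$, with the center potentials pinned at $0$, rather than transforming each against the original partner). The paper's argument is shorter and explains structurally why the center entries of $\rho$ are unconstrained (they absorb the diverging lazy mass); yours makes the primal--dual relationship between the two limit-free formulas explicit and is independent of the $\kappa_\eps$ limit, at the cost of the Lipschitz-extension bookkeeping.
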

\begin{rem}
We remark that $\rho$ is defined on balls, but we only require the coupling property on spheres.  We additionally do not assume anything concerning
$\sum_{x,y} \rho(x,y)$.
\end{rem}
\begin{proof}
We will write
\[ F(\rho) = \sum_{x \in B_1(x_0)} \sum_{y \in B_1(y_0)} \rho(x,y) \left[1 - \frac{d(x,y)}{d(x_0,y_0)}\right] \]
for any coupling $\rho$.   We wish to show that $\kappa(x_0,y_0) = \sup_\rho F(\rho)$ where the supremum is taken over all couplings $\rho$ satisfying (\ref{eq:rhoXProp}) and (\ref{eq:rhoYProp}).

Using that $\sum_{x,y}\rho(x,y)=1$ for all couplings considered in the transport definition of $W$, we have
	\begin{align*}
     \kappa_\eps(x_0,y_0) =  1 - 	\frac {W(m_{x_0}^\eps,m_{y_0}^\eps)}{d(x_0,y_0)} = 1 - \frac{\inf_\rho \sum_{x,y} \rho(x,y)d(x,y)}{d(x_0,y_0)} = \sup_\rho F(\rho)	
	\end{align*}
	where the supremum is taken over all  $\rho: B_1(x_0) \times B_1(y_0) \to [0,\infty)$ such that
	\begin{align*}
		\sum_{y \in B_1(y_0)} \rho(x,y) &= m_{x_0}^\eps(x) = 1_x(x_0) + \eps \Delta 1_x(x_0) \qquad  \mbox{ for all } x \in B_1(x_0) \mbox{ and} \\
		\sum_{x \in B_1(x_0)} \rho(x,y) &= m_{y_0}^\eps(y) = 1_y(y_0) + \eps \Delta 1_y(y_0) \qquad  \mbox{ for all } y \in B_1(y_0). 
	\end{align*}	
	Dividing $\rho$ by $\eps$ yields	
	\begin{align*}
	\frac{1}{\eps} \kappa_\eps(x_0,y_0) = \frac 1 {\eps} \left(1 - 	\frac {W(m_{x_0}^\eps,m_{y_0}^\eps)}{d(x_0,y_0)} \right) =
	\sup_\rho F(\rho)
	\end{align*}
	with the supremum taken over all  $\rho: B_1(x_0) \times B_1(y_0) \to [0,\infty)$ such that
	\begin{align}
	\sum_{y \in B_1(y_0)} \rho(x,y) &= \frac 1 \eps 1_x(x_0) + \Delta 1_x(x_0)  \qquad \mbox{ for all } x \in B_1(x_0) \mbox{ and}  \label{eq:rhoXProof} \\
	\sum_{x \in B_1(x_0)} \rho(x,y) &= \frac 1 \eps 1_y(y_0) + \Delta 1_y(y_0) \qquad \mbox{ for all } y \in B_1(y_0) \label{eq:rhoYProof}.
	\end{align}	
	
	We remark that (\ref{eq:rhoXProp}) and (\ref{eq:rhoYProp}) hold for all $\rho$ satisfying (\ref{eq:rhoXProof}) and (\ref{eq:rhoYProof}) as  
	$\Delta 1_x (x_0)= \frac {w(x_0,x)}{m(x_0)}$ for $x \not = x_0$.
	Therefore, $\frac 1 \eps \kappa_\eps(x_0,y_0)$ is less than or equal to the right hand side of (\ref{eq:PropTransport}).
	
	We now show that if we modify $\rho$ satisfying (\ref{eq:rhoXProp}) and (\ref{eq:rhoYProp}) appropriately, then the value of $F(\rho)$ in the right hand side of  (\ref{eq:PropTransport}) does not change and the modified $\rho$ satisfies  (\ref{eq:rhoXProof}) and (\ref{eq:rhoYProof}) which will show that $\frac 1 \eps \kappa_\eps(x_0,y_0)$ is larger than or equal to the right hand side of (\ref{eq:PropTransport}) for small $\eps$.
	
	Suppose that $\rho$ satisfies (\ref{eq:rhoXProp}) and (\ref{eq:rhoYProp}).
	We define
	\begin{align*}
	\rho_\eps(x,y) := \rho(x,y) + 1_{x}(x_0)1_{y}(y_0) \left( \frac 1 \eps - \sum_{u,v} \rho(u,v) \right)
	\end{align*}
	which is non-negative if $\eps$ is small.
	We observe that
		\begin{align*}
		F(\rho)= \sum_{x,y} \rho(x,y) \left[1 - \frac{d(x,y)}{d(x_0,y_0)} \right] = \sum_{x,y} \rho_\eps(x,y) \left[1 - \frac{d(x,y)}{d(x_0,y_0)} \right] = F(\rho_\eps)
		\end{align*}
		since $\rho_\eps(x,y)$ and $\rho(x,y)$ only differ at $(x_0,y_0)$ where the latter factor in the sums vanishes.
	Moreover,	
	\begin{align*}
	\sum_{x \in B_1(x_0)} \sum_{y \in B_1(y_0)} \rho_\eps(x,y) = \frac 1 \eps.
	\end{align*}
	
	We now show that $\rho_\eps$ satisfies (\ref{eq:rhoXProof}).
	Since $\frac 1 \eps 1_x(x_0) = 0$ on $S_1(x_0)$, we see that (\ref{eq:rhoXProp}) implies (\ref{eq:rhoXProof}) for $x\in S_1(x_0)$.
	For the remaining case $x=x_0$, equation (\ref{eq:rhoXProof}) follows since by (\ref{eq:rhoXProp}),
	\begin{align*}
	\sum_{y\in B_1(y_0)} \rho_\eps(x_0,y) = \frac 1 \eps - \sum_{x \in S_1(x_0)} \sum_{y \in B_1(y_0)} \rho(x,y) = \frac 1 \eps - \sum_{x \in S_1(x_0)} \frac{w(x_0,x)}{m(x_0)}	= \frac 1 \eps + \Delta 1_{x_0} (x_0).
	\end{align*}
Due to an analogous argument, $\rho_\eps$ also satisfies (\ref{eq:rhoYProof}).
Putting everything together proves that $\frac 1 \eps \kappa_\eps(x_0,y_0)$ equals the right hand side of (\ref{eq:PropTransport}) for small $\eps$.	
Taking $\eps \to 0^+$ finishes the proof.
\end{proof}

\subsection{Ollivier curvature on combinatorial graphs}

We now show how the transport cost expression for the curvature simplifies on combinatorial graphs.
We first describe the curvature on combinatorial graphs intuitively.  We note how this case complements Example~\ref{ex:NoCycles} 
which considered the case of graphs with no cycles.
\begin{itemize}
\item
Given an edge $x \sim y$, we have initial curvature $\kappa(x,y)=2$.
\item
Every triangle containing $x,y$ increases $\kappa(x,y)$ by one.
\item
Adding 4-cycles containing $x,y$ does not change $\kappa(x,y)$.
\item
Adding 5-cycles containing $x,y$ decreases $\kappa(x,y)$ by one.
\item
Every additional neighbor of both $x$ and $y$ decreases $\kappa(x,y)$ by one.
\end{itemize}
The following theorem gives a precise expression for the curvature of combinatorial graphs making the above intuition explicit.

\begin{theorem}[Transport and combinatorial graphs]\label{thm:TransprtCombinatorial}
Let $G=(V,w,m)$ be a combinatorial graph and let $x_0 \sim y_0$ be adjacent vertices. 
Let $B_{x_0y_0}:= B_1(x_0) \cap B_1(y_0)$, $B_{x_0}^{y_0}:=B_1(x_0) \setminus B_1(y_0)$ and  $B_{y_0}^{x_0}:=B_1(y_0) \setminus B_1(x_0).$
Let $\Phi_{x_0y_0} := \{\phi: D(\phi) \subseteq B_{x_0}^{y_0} \to R(\phi) \subseteq B_{y_0}^{x_0} :  \phi \mbox{ bijective}\}. $
 For $\phi \in \Phi_{x_0y_0}$ write  $D(\phi)^c := B_{x_0}^{y_0} \setminus D(\phi)$ and
$R(\phi)^c :=  B_{y_0}^{x_0} \setminus R(\phi).$
Then,
\begin{align*}
\kappa(x_0,y_0) = \# B_{x_0y_0} - \inf_{\phi \in \Phi_{x_0y_0}} \left( \#D(\phi)^c + \# R(\phi)^c + \sum_{x\in D(\phi)}   [d(x,\phi(x)) - 1]  \right).
\end{align*}
\end{theorem}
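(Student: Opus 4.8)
The plan is to specialise the transport-cost formula of Proposition~\ref{pro:CharTransport} to the combinatorial setting and then solve the resulting finite transportation problem combinatorially. Since $G$ is combinatorial and $x_0 \sim y_0$, we have $m \equiv 1$, $w(x_0,\cdot),w(y_0,\cdot) \in \{0,1\}$ and $d(x_0,y_0)=1$, so the proposition reads $\kappa(x_0,y_0)=\sup_\rho F(\rho)$ with $F(\rho)=\sum_{x,y}\rho(x,y)\,[1-d(x,y)]$, where $\rho\colon B_1(x_0)\times B_1(y_0)\to[0,\infty)$ has row sum $1$ on each $x\in S_1(x_0)$ and column sum $1$ on each $y\in S_1(y_0)$. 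First I would record the structure of the index sets: writing $T:=B_{x_0y_0}\setminus\{x_0,y_0\}$ for the common neighbours, the diagonal-eligible vertices are exactly $B_{x_0y_0}=\{x_0,y_0\}\cup T$; the set $A:=B_{x_0}^{y_0}$ consists of constrained rows that are not columns (so they cannot carry diagonal mass), and $C:=B_{y_0}^{x_0}$ consists of constrained columns that are not rows. Crucially, the row $x_0$ and the column $y_0$ are unconstrained.

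For the lower bound $\kappa(x_0,y_0)\geq \#B_{x_0y_0}-\inf_\phi \mathrm{cost}(\phi)$, where $\mathrm{cost}(\phi):=\#D(\phi)^c+\#R(\phi)^c+\sum_{x\in D(\phi)}[d(x,\phi(x))-1]$, I would turn each $\phi\in\Phi_{x_0y_0}$ into an explicit coupling: set $\rho(v,v)=1$ for every $v\in B_{x_0y_0}$, set $\rho(x,\phi(x))=1$ for $x\in D(\phi)$, dump each unmatched $x\in D(\phi)^c$ onto the free column via $\rho(x,y_0)=1$, and fill each unmatched $y\in R(\phi)^c$ from the free row via $\rho(x_0,y)=1$. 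A direct check shows this $\rho$ is feasible. Since $1-d(v,v)=1$ on the diagonal, the matched pairs contribute $1-d(x,\phi(x))$, and each dumped or filled pair lies at distance $2$ and hence contributes $-1$, one computes $F(\rho)=\#B_{x_0y_0}-\mathrm{cost}(\phi)$. Taking the supremum over $\phi$ yields the desired inequality.

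For the reverse inequality I would pass to an integral optimiser. Discarding the zero-reward mass on $(x_0,y_0)$, the supremum is attained on a bounded transportation polytope with integer margins; by total unimodularity there is an integral optimal $\rho$, necessarily $\{0,1\}$-valued on all constrained rows and columns. I would then analyse its off-diagonal edges $\{(x,y):x\neq y,\ \rho(x,y)=1\}$ as a directed graph $H$, in which the vertices of $A$ are pure sources, those of $C$ pure sinks, and those of $B_{x_0y_0}$ act as intermediates. First, $H$ has no directed cycle: deleting a cycle and self-looping its vertices strictly increases $F$, since it adds one unit of diagonal reward per cycle vertex while removing non-negative edge costs, contradicting optimality. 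Hence $H$ is a union of directed paths, and each path may be short-cut to a single edge joining its endpoints while restoring the self-loops of all intermediate vertices of $B_{x_0y_0}$; the triangle inequality applied along the path guarantees that this rerouting does not decrease $F$, with a free source endpoint routed from $x_0$ and a free sink endpoint routed to $y_0$. The resulting canonical coupling has a full diagonal on $B_{x_0y_0}$ together with off-diagonal edges that join $A$ to $C$ injectively (plus dumps to $y_0$ and fills from $x_0$); reading off the injection produces $\phi\in\Phi_{x_0y_0}$ with $F(\rho)=\#B_{x_0y_0}-\mathrm{cost}(\phi)\leq \#B_{x_0y_0}-\inf_\phi\mathrm{cost}(\phi)$. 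Combining the two bounds gives the claimed identity.

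I expect the short-cutting and rerouting step to be the main obstacle. The naive approach of bounding $F$ by separating the diagonal gain $\sum_v\rho(v,v)\leq \#B_{x_0y_0}$ from the off-diagonal cost fails, because the off-diagonal cost alone can be strictly smaller than $\inf_\phi\mathrm{cost}(\phi)$: routing a source $a\in A$ into the column of $x_0$, or the row $y_0$ into a sink in $C$, is free yet silently ``steals'' a self-loop. The correct accounting must therefore treat diagonal usage and off-diagonal transport jointly, which is precisely what the integrality-plus-rerouting argument achieves; carrying out the path decomposition and verifying the sign of each exchange via the triangle inequality is the technical heart of the proof.
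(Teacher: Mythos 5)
Your proof is correct and shares the paper's skeleton: specialize Proposition~\ref{pro:CharTransport} to the combinatorial case, pass to an integral optimal coupling whose diagonal is fully occupied on $B_{x_0y_0}$, and read off the bijection $\phi$; your lower-bound direction (building $\rho_\phi$ from $\phi$ and computing $F(\rho_\phi)=\#B_{x_0y_0}-\mathrm{cost}(\phi)$) is identical to the paper's. Where you diverge is in how the normalized coupling is produced. The paper's Lemma~\ref{lem:01lemma} proves integrality by an explicit alternating-sequence/loop perturbation (relegating total unimodularity to a remark) and then saturates the diagonal by iterated local swaps, with one case for a common neighbour $z$ and one for $z\in\{x_0,y_0\}$, each controlled by the triangle inequality. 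You instead invoke total unimodularity outright and then run a global argument on the directed graph of off-diagonal unit edges: cycles are excluded because contracting them to self-loops strictly increases $F$, and each maximal path is short-cut to a single source-to-sink edge, the triangle inequality along the path showing that $F$ does not decrease. Your version buys a one-pass normalization that makes the final shape of the coupling (full diagonal on $B_{x_0y_0}$ plus an injection of $B_{x_0}^{y_0}$ into $B_{y_0}^{x_0}\cup\{y_0\}$ and fills from $x_0$) immediately visible, at the cost of having to treat separately the endpoint cases where a path terminates at $x_0$ or originates at $y_0$ and thereby ``steals'' a diagonal; you correctly identify and handle exactly these cases, so I see no gap, provided the path-contraction bookkeeping is written out as you indicate.
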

We remark that $\Phi_{x_0y_0} \neq \emptyset$ since $\Phi_{x_0y_0}$ always contains the empty function.

\begin{figure}[h]
	\centering
\begin{tikzpicture}[scale=0.8, transform shape]

\vertex[label={[name=lx] below:{$x_0$}}](x)at (-1,-1) {};
\vertex[label={[name=ly] below:{$y_0$}}](y) at (1,-1) {};
\node(common)[draw,rectangle] at (0,0.5) {$S_1(x_0) \cap S_1(y_0)$};
\Edge(x)(y)
\Edge(x)(common)
\Edge(y)(common)

\node[draw, circle, fit=(x)(y)(common)(lx)(ly),minimum size=4cm, label=below:{$B_{x_0y_0}$}](BI){};


\node[draw, rectangle](D) at 	(-3,2)		{$D(\phi)$};
\node(Dc) at 				(-3,3)		{$D(\phi)^c$};
\node[label=left:{$B_{x_0}^{y_0}$}, fit=(D)(Dc)](Bxy)[draw, circle, minimum size=2.8cm] {};

\node[draw, rectangle](R) at 	(3,2)			{$R(\phi)$};
\node(Rc) at 				(3,3)			{$R(\phi)^c$};
\node[label=right:{$B_{y_0}^{x_0}$}, fit=(R)(Rc)](Byx)[draw, circle, minimum size=2.8cm] {};

\Edge(x)(y)
\Edge(x)(common)
\Edge(y)(common)
\Edge(x)(Bxy)
\Edge(y)(Byx)

\draw[->,>=stealth,line width = 1.5pt] (D) -- node[label=$\phi$] {}(R);
				
\end{tikzpicture}
\caption{The figure is a scheme of  the terms used in Theorem~\ref{thm:TransprtCombinatorial} providing a simple method to compute the curvature for combinatorial graphs.}
\label{fig:transportCombinatorial}
\end{figure}
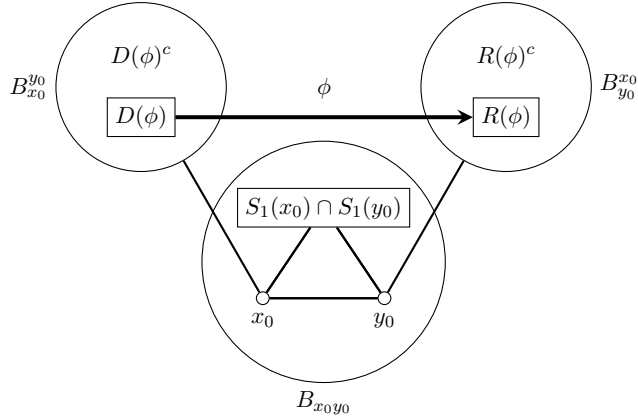
To prove the theorem, we first show that the coupling function $\rho$ which gives $\kappa(x_0,y_0)$ via the expression found in Proposition~\ref{pro:CharTransport} can be assumed to be integer valued for combinatorial graphs.  As in the proof of Proposition~\ref{pro:CharTransport} and since we assume that $x_0 \sim y_0$, we let 
\[ F(\rho)= \sum_{x,y} \rho(x,y)(1 - d(x,y)) \]
for $\rho: B_1(x_0) \times B_1(y_0) \to [0,\infty)$.
We note that in the case of combinatorial graphs, (\ref{eq:rhoXProp}) and (\ref{eq:rhoYProp}) become
\begin{equation}\label{eq:TransportCombinatorial}
\sum_{y \in B_1(y_0)} \rho(x,y) = 1 \quad \forall x \sim x_0 \qquad \mbox{ and}  \qquad \sum_{x \in B_1(x_0)} \rho(x,y) = 1 \quad \forall y \sim y_0. 
\end{equation}
In particular, as we assume that $x_0 \sim y_0$, we have $\sum_x \rho(x,x_0)= \sum_y \rho(y_0,y) =1$.
\begin{lemma}\label{lem:01lemma}
Let $G=(V,w,m)$ be a combinatorial graph and let $x_0 \sim y_0$. Then, there exists $\rho:B_1(x_0) \times B_1(y_0) \to \{0,1\}$ satisfying \eqref{eq:TransportCombinatorial}   such that
$\kappa(x_0,y_0) = F(\rho)$.
Furthermore, $\rho$ can be chosen to satisfy $\rho(z,z)=1$ for all $z \in B_{x_0y_0}$.
\end{lemma}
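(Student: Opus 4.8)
The plan is to read the claim through Proposition~\ref{pro:CharTransport}, which already gives $\kappa(x_0,y_0) = \sup_\rho F(\rho)$ with the supremum over all $\rho \colon B_1(x_0)\times B_1(y_0)\to[0,\infty)$ satisfying \eqref{eq:TransportCombinatorial}; it then remains to show this supremum is attained by a $\{0,1\}$-valued coupling which, moreover, can be taken to be the identity on $B_{x_0y_0}$. First I would record that every constrained row (those indexed by $x\sim x_0$) and every constrained column (those indexed by $y\sim y_0$) sums to $1$, so each entry of $\rho$ lying in such a row or column is bounded by $1$; the only entry escaping this bound is $\rho(x_0,y_0)$, whose coefficient in $F$ is $1-d(x_0,y_0)=0$ since $x_0\sim y_0$, and which occurs in no constraint of \eqref{eq:TransportCombinatorial}. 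Hence I may set $\rho(x_0,y_0)=0$ without changing $F$ or feasibility, and work inside the compact polytope $P\subseteq[0,1]^{B_1(x_0)\times B_1(y_0)}$ cut out by \eqref{eq:TransportCombinatorial} together with $\rho(x_0,y_0)=0$. Since $F$ is linear and $P$ is compact and nonempty (the supremum equals the finite number $\kappa(x_0,y_0)$), a maximizer exists.

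Next I would upgrade the maximizer to a $\{0,1\}$-valued one. The constraints \eqref{eq:TransportCombinatorial} are exactly the row/column mass constraints of a (partial) bipartite transportation problem: each variable $\rho(x,y)$ occurs in at most one row constraint and at most one column constraint, both with coefficient $1$. The associated constraint matrix is therefore totally unimodular, and since all right-hand sides equal $1$, every vertex of $P$ is integral. A linear functional attains its maximum over a polytope at a vertex, so I obtain an integral maximizer; combined with the entrywise bound $0\le\rho\le1$ established above, this maximizer takes values in $\{0,1\}$. This already yields a $\rho\in\{0,1\}$ satisfying \eqref{eq:TransportCombinatorial} with $\kappa(x_0,y_0)=F(\rho)$.

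For the diagonal normalization I would argue by an exchange (augmenting-cycle) argument: among all $\{0,1\}$-valued maximizers I choose one maximizing the diagonal mass $\sum_{z\in B_{x_0y_0}}\rho(z,z)$ and suppose, for contradiction, that $\rho(z,z)=0$ for some $z\in B_{x_0y_0}$. If $z$ is a common neighbor of $x_0$ and $y_0$, then row $z$ and column $z$ are both constrained, so there are unique $b\neq z$ and $a\neq z$ with $\rho(z,b)=\rho(a,z)=1$; rerouting via $\rho(z,z)\mapsto1$, $\rho(z,b)\mapsto0$, $\rho(a,z)\mapsto0$, $\rho(a,b)\mapsto\rho(a,b)+1$ preserves all four affected marginals, stays in $\{0,1\}$ (here one checks that $\rho(a,b)=1$ would force $(a,b)=(x_0,y_0)$, which is excluded by $\rho(x_0,y_0)=0$), and changes $F$ by $d(z,b)+d(a,z)-d(a,b)\geq0$ by the triangle inequality through $z$. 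If instead $z=x_0$, then column $x_0$ is constrained with its unique $1$ at some $\rho(a,x_0)$ with $a\neq x_0$, hence $a\sim x_0$, and I reroute through the unconstrained row $x_0$ and column $y_0$ via $\rho(x_0,x_0)\mapsto1$, $\rho(a,x_0)\mapsto0$, $\rho(a,y_0)\mapsto1$, again preserving feasibility and the $\{0,1\}$ property and changing $F$ by $2-d(a,y_0)\geq0$; the case $z=y_0$ is symmetric under transposing the roles of rows and columns. In every case no diagonal entry decreases while $\rho(z,z)$ strictly increases, so the diagonal mass strictly increases, contradicting maximality. Hence $\rho(z,z)=1$ for all $z\in B_{x_0y_0}$.

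The main obstacle is precisely the diagonal normalization at the center vertices $x_0$ and $y_0$: there the relevant row (resp.\ column) carries no marginal constraint, so the simple four-cycle rotation that works for common neighbors is unavailable, and the rerouting must instead push the displaced unit mass into the unconstrained column $y_0$ (resp.\ row $x_0$); verifying that this target entry is free to receive the mass is exactly where the normalization $\rho(x_0,y_0)=0$ and the bound $0\le\rho\le1$ are used.
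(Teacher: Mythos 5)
Your proof is correct, but the first half takes a genuinely different route from the paper's main argument. For the integrality of the optimizer, the paper's proof is a hands-on combinatorial one: it takes a maximizer with the fewest fractional entries, builds a maximal alternating sequence or cycle inside the fractional support using \eqref{eq:TransportCombinatorial}, and perturbs $\rho$ by $\pm C$ along it until some entry hits $0$ or $1$, contradicting minimality. You instead normalize $\rho(x_0,y_0)=0$ (correctly observing that this entry lies in no constraint and has coefficient $1-d(x_0,y_0)=0$ in $F$), deduce that the remaining feasible set is a compact polytope, and invoke total unimodularity of the bipartite transportation constraint matrix to get an integral vertex maximizer. This is precisely the alternative the authors flag in the remark following the lemma; your version outsources the combinatorics to standard LP theory and is shorter, while also being more explicit than the paper about why a maximizer exists at all (boundedness after fixing $\rho(x_0,y_0)$). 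For the diagonal normalization your argument is essentially the paper's: the same two cases (common neighbor $z$, and $z\in\{x_0,y_0\}$), the same reroutings, and the same cost computations $d(a,z)+d(z,b)-d(a,b)\ge 0$ and $2-d(a,y_0)\ge 0$. Two small improvements on your side: choosing a maximizer of the diagonal mass makes termination of the exchange process immediate (the paper just says ``repeating the argument''), and you explicitly verify that $\rho(a,b)=0$ before rerouting --- using that $\rho(a,b)=1$ would force $(a,b)=(x_0,y_0)$, excluded by your normalization --- where the paper asserts this more tersely from \eqref{eq:TransportCombinatorial}.
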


\begin{proof}
We first show that $\rho$ can be chosen to take values in $\{0,1\}$.
Suppose not. Let $\rho$ be a coupling which satisfies \eqref{eq:TransportCombinatorial} such that $\kappa(x_0,y_0) = F(\rho)$ and so that $\rho$ has the minimal number of non-$\{0,1\}$ entries.
Denote by 
\[ M=\{(x,y) \in B_1(x_0) \times B_1(y_0):\rho(x,y) \notin \{0,1\}\}. \] 
By assumption $M \not = \emptyset$.
We first note that $(x_0,y_0) \not \in M$ as, if $(x_0,y_0) \in M$, then we could replace $\rho$ by a coupling whose value at $(x_0,y_0)$ is 0 without changing the value of $F(\rho)$.

By using \eqref{eq:TransportCombinatorial} repeatedly, 
we can then construct a maximal sequence $S=((x_A,y_A),\ldots,(x_B,y_B))$ in $M$ with $B\geq A \geq 0$ which has the following properties:
\begin{enumerate}
\item
$x_{2n+1} = x_{2n} \neq x_k$ for all $k \notin \{2n,2n+1\}$.
\item
$y_{2n}=y_{2n-1} \neq y_k$ for all $k \notin \{2n,2n-1\}$.
\end{enumerate}
Without loss of generality, we may assume that either $x_B=x_{B-1}$ or $A=B$.

Now, suppose that $y_B \neq y_0$.
Then, by \eqref{eq:TransportCombinatorial} there exists $(x_{B+1},y_B) \in M$ with $x_{B+1} \neq x_B$ since $\sum_x \rho(x,y_B) = 1$.
Due to the maximality of $S$, we cannot add $(x_{B+1},y_B)$ to $S$ and the only possible reason for this is that there exists $A'<B$ with $x_{A'} = x_{B+1}$ where we choose $A'$ to be maximal.
In this case, we replace $S$ by the loop $L=((x_{A'},y_{A'}), \ldots, (x_{B},y_B),(x_{B+1},y_B))$.
We proceed analogously if $y_B=y_0$ and $x_A \neq x_0$ and replace $S$ by the loop $L=((x_{A-1},y_A),\ldots,(x_{B'},y_{B'}))$.
In case we do not replace $S$ by a loop, the sequence starts with $(x_0,y_A)$ or $(x_A,y_0)$ and ends with $(x_0,y_B)$ or $(x_B,y_0)$.

Given a sequence $S$ or a loop $L$ as constructed above, we can change $\rho$ on $S$ or $L$ while preserving \eqref{eq:TransportCombinatorial}. We do this by letting $\rho_C(x_n,y_n) := \rho(x_n,y_n) + C(-1)^n$ and $\rho_C(x,y) := \rho(x,y)$ otherwise.
It is easy to check that \eqref{eq:TransportCombinatorial} also holds for $\rho_C$.
The objective function $F$ is linear. Therefore, $F(\rho_C) \geq F(\rho)$ for all negative or all positive $C$. Without loss of generality, we assume that $F(\rho_C) \geq F(\rho)$ for all positive $C$.
We choose $C$ maximal such that $\rho_C \geq 0$. Then, there exists $(x,y)$ in the sequence with $\rho_C(x,y) = \rho(x,y)-C = 0$ so that $\rho_C(x,y) \in \{0,1\}$ but $\rho(x,y) \notin \{0,1\}$.
This contradicts the minimality of the number of $\{0,1\}$ entries of $\rho$.
The contradiction finishes the proof of the first part of the statement.

We now show the furthermore statement, that is, that $\rho$ can additionally be chosen so that $\rho(z,z) =1$ for all $z \in B_{x_0y_0}=B_1(x_0) \cap B_1(y_0)$.  Suppose that $\rho(z,z) \neq 1$ for some $z \in B_{x_0y_0}$. Then, $\rho(z,z)=0$.

Case 1. 
We first assume that $z \in S_1(x_0) \cap S_1(y_0)$.
Then, there exists $x_z \in B_1(x_0)$ and $y_z \in B_1(y_0)$ with $\rho(x_z,z)=1=\rho(z,y_z)$ and thus $\rho(x_z,y_z)=0$ by \eqref{eq:TransportCombinatorial}.
Define $\widetilde\rho(z,z)=\widetilde\rho (x_z,y_z)=1$ and $\widetilde \rho(x_z,z) = \widetilde \rho (z,y_z)=0$ and $\widetilde\rho(x,y) = \rho(x,y)$ otherwise.
Then, $\widetilde \rho$ also satisfies \eqref{eq:TransportCombinatorial}.
Moreover, $F(\tilde \rho) = F(\rho) + d(x_z,z) + d(z,y_z) - d(x_z,y_z) \geq F(\rho)$.

Case 2.
If $z=x_0$, there exists $x_z\sim x_0$ with $\rho(x_z,x_0)=1$.
Now, set $\widetilde \rho(x_0,x_0)=\widetilde \rho(x_z,y_0) = 1$ and $\widetilde \rho(x_z,x_0)=0$ and $\widetilde{\rho}(x,y)=\rho(x,y)$ otherwise.
Then, $\widetilde \rho$ also satisfies \eqref{eq:TransportCombinatorial}.
Moreover, $F(\tilde \rho) = F(\rho) + 2 - d(x_z,y_0) \geq F(\rho)$.  An analogous argument works in the case $z=y_0$.

Therefore, in both cases, $\widetilde \rho$ is also a $\{0,1\}$-valued function satisfying \eqref{eq:TransportCombinatorial} such that $\kappa(x_0,y_0) = F(\widetilde \rho)$ and $\widetilde{\rho}(z,z)=0$. Repeating the argument yields the existence of a $\widetilde \rho$ such that
$\widetilde\rho(z,z) = 1$ for all $z \in B_{x_0y_0}$.
\end{proof}
\begin{rem}
One can also prove the integrality of the transport function $\rho$ in  Lemma~\ref{lem:01lemma} by using the theory of linear programming. In particular, the constraint matrix is a submatrix of the constraint matrix of a classical assignment problem and, therefore, totally unimodular. By standard theory and due to the integrality of all parameters, this implies the existence of an integral optimal solution $\rho$.
\end{rem}

We are now prepared to prove Theorem~\ref{thm:TransprtCombinatorial} expressing the curvature for combinatorial graphs via transport costs.
\begin{proof}[Proof of Theorem~\ref{thm:TransprtCombinatorial}]
Due to Lemma~\ref{lem:01lemma}, we can assume that the optimizing function $\rho$ satisfying \eqref{eq:TransportCombinatorial} and $\kappa(x_0,y_0)=F(\rho)=\sum_{x,y} \rho(x,y)(1-d(x,y))$ takes values in $\{0,1\}$ and satisfies
$\rho(z,z) = 1$ for all $z \in B_{x_0y_0}$.  Therefore, $\rho(x,x_0)=0$ for all $x \sim x_0$, $\rho(y_0,y)=0$ for all $y \sim y_0$ and $\rho(z,y)=\rho(x,z)=0$ for all $z \in S_1(x_0) \cap S_1(y_0)$ where $x,y \not = z$.
Thus,
\begin{align*}
F(\rho) &= \sum_{x,y} \rho(x,y)(1 - d(x,y)) \\
&= \sum_{x\neq x_0, y\neq y_0} \rho(x,y) (1-d(x,y)) + \rho(x_0,x_0) - \sum_{y \in B_{y_0}^{x_0}} \rho(x_0,y) + \rho(y_0,y_0)  - \sum_{x \in B_{x_0}^{y_0}} \rho(x,y_0)  \\
&=\# B_{x_0y_0} -   \sum_{x \in B_{x_0}^{y_0}} \rho(x,y_0) - \sum_{y \in B_{y_0}^{x_0}} \rho(x_0,y)
+ \sum_{x\in B_{x_0}^{y_0}, y \in B_{y_0}^{x_0}} \rho(x,y)(1-d(x,y)).   \end{align*}

If $x \in B_{x_0}^{y_0}$, then $x \sim x_0$ so that $\sum_y \rho(x, y)=1$ by \eqref{eq:TransportCombinatorial}.  Therefore, as $\rho(x,z)=0$ for all $z \in S_1(x_0) \cap S_1(y_0)$, either $\rho(x,y_0)=1$ or there exists a unique $y \in B_{y_0}^{x_0}$ such that $\rho(x,y)=1$.  In the second case, $y \in B_{y_0}^{x_0}$ is unique as $\sum_x \rho(x,y)=1$ by \eqref{eq:TransportCombinatorial}. Hence, $\rho$ can be uniquely associated with a bijection $\phi_\rho \in \Phi_{x_0y_0}$ by letting 
\[D(\phi_\rho) = \{ x \in B_{x_0}^{y_0} : \mbox{ there exists a unique } y \in B_{y_0}^{x_0} \mbox{ such that } \rho(x,y)=1 \} \] 
and $\phi_\rho(x) = y$ for $x \in D(\phi)$.

Note, by the dichotomy above, that $D(\phi_\rho)^c = \{ x \in B_{x_0}^{y_0} : \rho(x,y_0) =1 \}$ and $R(\phi_\rho)^c=\{ y \in B_{y_0}^{x_0} : \rho(x_0,y)=1\}$.  Therefore,
\begin{align*}
 \kappa(x_0,y_0) = F(\rho) &= \# B_{x_0y_0} - \left( \#D(\phi_\rho)^c + \#R(\phi_\rho)^c + \sum_{x \in D(\phi_\rho)} [d(x, \phi_\rho(x))-1] \right) \\
 &\leq \# B_{x_0y_0} - \inf_{\phi \in \Phi_{x_0y_0}} \left( \#D(\phi)^c + \# R(\phi)^c + \sum_{x\in D(\phi)}   [d(x,\phi(x)) - 1]  \right).
\end{align*}
On the other hand,  if $\phi \in \Phi_{x_0y_0}$, we can reverse the process above to define $\rho_\phi:B_1(x_0) \times B_1(y_0) \to \{0,1\}$ by letting $\rho_\phi(z,z)=1$ for all $z \in B_{x_0y_0}$, $\rho_\phi(x, \phi(x))=1$ for all $x \in D(\phi)$, $\rho_\phi(x,y_0)=1$ for all $x \in D(\phi)^c$, $\rho(x_0,y)=1$ for all $y \in R(\phi)^c$ and $\rho_\phi(x,y)=0$ otherwise.  As above, it follows that $\rho_\phi$ satisfies \eqref{eq:TransportCombinatorial} and that  $F(\rho_\phi) =  \# B_{x_0y_0} - \left( \#D(\phi)^c + \#R(\phi)^c + \sum_{x \in D(\phi)} [d(x, \phi(x))-1] \right).$
Therefore,
\begin{align*}
 \kappa(x_0,y_0) = \sup_\rho F(\rho) \geq F(\rho_\phi) &= \# B_{x_0y_0} - \left( \#D(\phi)^c + \#R(\phi)^c + \sum_{x \in D(\phi)} [d(x, \phi(x))-1] \right) 
 \end{align*}
 for all $\phi \in \Phi_{x_0y_0}$.
 Combining the two inequalities completes the proof.
\end{proof}

\subsection{Ollivier curvature on birth-death chains}\label{sec:OllivierBirthDeath}

The curvature of birth-death chains is easy to compute. Moreover, as we will see later, many problems of interest concerning Ollivier curvature can be reduced to the case of birth-death chains.

\begin{defn}
A graph $G=(\IN_0,w,m)$ is called a \emph{birth-death chain} if
$$
w(m,n) =0 \quad \mbox{ whenever } \quad |m-n|\neq 1.
$$
\end{defn}
\begin{theorem}[Curvature of a birth-death chain]\label{thm:line}
	Let $G=(\IN_0,w,m)$ be a birth-death chain and let $f(r):=d(0,r)=r$. Then for $0\leq r<R$,
\begin{align*}
	\kappa(r,R) &= \nabla_{rR}\Delta f = \frac{\Delta f(r) - \Delta f(R)}{R-r}  \\&= \frac{w(r,r+1) - w(r,r-1)}{(R-r)m(r)}   -  \frac{w(R,R+1) - w(R,R-1)}{(R-r)m(R)}
\end{align*}
where we set $w(r,r-1) :=0$ if $r=0$.	
\end{theorem}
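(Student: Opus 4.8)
The plan is to apply the limit-free formula of Theorem~\ref{thm:nablaDelta}, which in the present situation (taking $x=r$, $y=R$) reads
$$\kappa(r,R) = \inf_{\substack{f \in Lip(1) \cap C_c(V) \\ \nabla_{Rr}f = 1}} \nabla_{rR}\Delta f,$$
and to show that the geodesic function $f_0 := d(0,\cdot)$, i.e. $f_0(s)=s$, is a minimizer. Once this is established, the explicit formula is a routine computation: since $f_0(s+1)-f_0(s)=1$ and $f_0(s-1)-f_0(s)=-1$, one has $\Delta f_0(s) = \frac{1}{m(s)}[w(s,s+1) - w(s,s-1)]$, and substituting this into $\nabla_{rR}\Delta f_0 = (\Delta f_0(r) - \Delta f_0(R))/(R-r)$ produces exactly the claimed right-hand side. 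So the entire content is to prove that $f_0$ attains the infimum.

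For the upper bound I observe that $f_0$ meets the constraints: it lies in $Lip(1)$ and $\nabla_{Rr}f_0 = (R-r)/(R-r) = 1$. Strictly $f_0 \notin C_c(V)$, but since $\Delta f$ evaluated at $r$ and at $R$ depends only on the values of $f$ on $B_1(r)\cup B_1(R)$, I may replace $f_0$ by any finitely supported $Lip(1)$ function agreeing with it there — precisely the truncation $\widetilde f$ constructed in the proof of Theorem~\ref{thm:nablaDelta} — without altering $\nabla_{rR}\Delta f_0$. This yields $\kappa(r,R) \le \nabla_{rR}\Delta f_0$.

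The main work is the matching lower bound $\nabla_{rR}\Delta f \ge \nabla_{rR}\Delta f_0$ for every feasible $f$. The key observation is that the constraint $f(R)-f(r)=R-r$ together with $f\in Lip(1)$ forces $f$ to increase at unit speed along the geodesic $r, r+1, \dots, R$: by telescoping, $\sum_{s=r}^{R-1}(f(s+1)-f(s)) = R-r$ is a sum of $R-r$ terms each at most $1$, so each equals $1$; in particular $f(r+1)-f(r)=1$ and $f(R)-f(R-1)=1$. Combining these identities with the Lipschitz estimates $f(r-1)-f(r)\ge -1$ and $f(R+1)-f(R)\le 1$ and with the non-negativity of the edge weights $w$, I obtain $\Delta f(r) \ge \Delta f_0(r)$ and $\Delta f(R) \le \Delta f_0(R)$, whence $\nabla_{rR}\Delta f = (\Delta f(r)-\Delta f(R))/(R-r) \ge \nabla_{rR}\Delta f_0$. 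Taking the infimum gives $\kappa(r,R) \ge \nabla_{rR}\Delta f_0$, and combining with the upper bound yields equality. I expect the only delicate points to be this lower bound — specifically the interplay between the telescoping argument that pins down the geodesic increments and the sign control supplied by $w \ge 0$ — together with the bookkeeping of the boundary case $r=0$, where the stated convention $w(0,-1)=0$ keeps all formulas consistent.
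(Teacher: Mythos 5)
Your proposal is correct and follows essentially the same route as the paper: apply Theorem~\ref{thm:nablaDelta}, note that the constraint $\nabla_{Rr}f=1$ together with $f\in Lip(1)$ forces unit increments along the geodesic from $r$ to $R$, and use $w\geq 0$ with the remaining Lipschitz bounds at $r-1$ and $R+1$ to get $\Delta f(r)\geq\Delta f_0(r)$ and $\Delta f(R)\leq\Delta f_0(R)$. Your explicit truncation of $f_0$ to a compactly supported function is a small point of extra care that the paper's proof glosses over.
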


\begin{proof}
	The last equality is a straightforward computation. We now prove the first equality.
	Due to Theorem~\ref{thm:nablaDelta}, as $f \in Lip(1)$ and $\nabla_{Rr}f=1$, it is clear that
	\begin{align*}
	\kappa(r,R) \leq \nabla_{rR} \Delta f = \frac{\Delta f(r) - \Delta f(R)}{R-r}.
	\end{align*}

We will now show the other inequality to complete the proof.	
Let $g \in Lip(1)$ be such that $\nabla_{Rr}g=1$, i.e., $g(R)-g(r)=R-r$.  Therefore, $g(n+1)-g(n)=1$ for all $r \leq n \leq R-1$ so that, in particular,
	$g(r+1)-g(r) = 1 = g(R)-g(R-1)$.
	Moreover, $a:=g(r)-g(r-1) \leq 1$ and $b:=g(R+1)-g(R) \leq 1$ since $g \in Lip(1)$.
	As
	$$
	m(r)\Delta g(r) = w(r,r+1) - a w(r,r-1) \geq m(r)\Delta f(r) 
	$$
	and
	$$
	m(R)\Delta g(R) = bw(R,R+1) -  w(R,R-1) \leq m(R)\Delta f(R)
	$$	
it follows that 
\begin{align*}
\frac{\Delta g(r) - \Delta g(R)}{R-r} \geq \frac{\Delta f(r) - \Delta f(R)}{R-r}.
\end{align*}
Therefore, Theorem~\ref{thm:nablaDelta} yields that
$$
	\kappa(r,R) \geq \frac{\Delta f(r) - \Delta f(R)}{R-r}
$$
which implies the claim of the theorem.
\end{proof}

\begin{rem}\label{rem:line}
We note that it is easy to see from the above that
\[ \kappa(0,r) = \frac{1}{r} \sum_{n=0}^{r-1} \kappa(n,n+1).\] 
In particular, $\kappa(r-1,r)=K$ if and only if $\kappa(0,r)=K$ for all $r \geq 1$.
\end{rem}

\section{Gradient estimates}\label{sec:GradEstimates}
Our proof of the gradient estimate of the semigroup under a Ricci curvature bound deeply relies on the maximum principle which requires taking maxima over compact sets. For applying this technique to infinite, and hence, non-compact graphs, we employ a cutoff method.
However, standard cutoff techniques like taking Dirichlet boundary conditions on a finite subgraph do not work since the gradient of a function may leave the subgraph.
Also cutting off with a finitely supported function after taking the semigroup appears to be not successful since we do not have control over the semigroup before taking the cutoff.

The idea to overcome these difficulties is to deeply intertwine the semigroup with a finitely supported cutoff function.
 We call this the perpetual cutoff method which will result in a non-linear cutoff semigroup whose general properties we first develop below.
For general theory on non-linear semigroups, see e.g. \cite{barbu1976nonlinear,kato1967nonlinear, miyadera1992nonlinear}.
We will then apply this general theory to prove our main characterization which connects a lower Ricci curvature bound with a gradient decay of the semigroup.

\subsection{The perpetual cutoff method}

The intuition of the non-linear cutoff semigroup presented below is that it behaves exactly as the heat semigroup whenever the heat does not surpass the cutoff threshold.
The name perpetual cutoff method comes from the fact that the cutoff threshold is not only applied once, but perpetually for all times $t>0$.

\begin{defn}[Cutoff semigroup]
Let $\phi \in C_c(V)$ be a non-negative function and let $f \in [0,\phi] := \{g \in C_c(V) : 0 \leq g \leq \phi\}$. 

For $t\geq0$, we define
\[
Q_t^\phi f :=  P_t f \wedge \phi
\]
and the \emph{cutoff semigroup}
\[
P_t^\phi f := \inf_{t_1 + \ldots + t_n = t} Q_{t_1}^\phi \ldots Q_{t_n}^\phi f.
\]
\end{defn}

We note, by checking cases, that $Q_t^\phi Q_s^\phi \leq Q_{t+s}^\phi$ and, as $P_t$ is positivity preserving, the infimum exists.

Let $W \subset V$ be finite.
We will show that $P_t^\phi f$ is a generalization of the semigroup $e^{t \Delta_W}$ with $\Delta_W f := 1_W \Delta (1_W f)$ corresponding to the Dirichlet problem $\partial_t u = \Delta u$ on $W$ and $u=0$ on $V \setminus W$.  In particular, $P_t^\phi = e^{t \Delta_W}$ when we take $\phi = 1_W$ as the cutoff function.  Furthermore, $P_t^\phi$ solves the heat equation at all vertices $x$ where $P_t^\phi(x) < \phi(x)$.

We collect these and some other useful properties of $P_t^\phi$ in the following theorem.
We write 
\[\overline {\partial_t^\pm} G(t) := \limsup_{h \to 0^\pm} \frac{G(t+h) -G(t)} h\] and \[\underline {\partial_t^\pm} G(t) := \liminf_{h \to 0^\pm} \frac{G(t+h) -G(t)} h\] for a function $G$ depending on $t$.  

\begin{theorem}[Properties of the cutoff semigroup]\label{thm:Cutoff}
Let $G=(V,w,m)$ be a graph and let $\phi \in C_c(V)$ be non-negative.
The family $P_t^\phi : [0,\phi] \to [0,\phi]$ is a nonlinear contraction semigroup with respect to $\|\cdot\|_p$ for all $p \in [1,\infty]$ and $t\geq0$. In particular, for $f,g \in [0,\phi]$ and $s,t \geq0$, we have:
\begin{enumerate}[(i)]
\item $P_t^\phi P_s^\phi = P_{t+s}^\phi$,
\item $\| P_t^\phi f - P_t^\phi g \|_p  \leq \|f-g\|_p$,
\item $P_0^\phi f = f$,
\item
$
P_t^\phi f \geq P_t^\psi g \quad  \mbox{ whenever } \quad  \phi \geq \psi \geq f \geq g,
$
\item $e^{-t \Deg} f   \leq  P_t^\phi f \leq P_t f$,
\item $P_t^\phi f$ is Lipschitz in $t$,
\item
$
\overline{\partial_t^\pm} P_t^\phi f  \leq \Delta  P_t^\phi f,
$
\item
$
\partial_t P_t^\phi f (x) = \Delta  P_t^\phi f (x)  \quad \mbox{ whenever } \quad    P_t^\phi f (x) <  \phi(x),
$
\item
$
P_t^\phi f = e^{t\Delta_W} f  \quad \mbox{ whenever } \quad \phi = 1_W \mbox{ for } W \subset V \mbox{ finite}.
$

\end{enumerate}
\end{theorem}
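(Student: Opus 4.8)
The plan is to exploit that, since $f\in[0,\phi]$ and every $Q^\phi_t$ ends with a truncation against $\phi\in C_c(V)$, all of the functions $Q^\phi_{t_1}\cdots Q^\phi_{t_n}f$ and hence $P^\phi_t f$ are non-negative, bounded by $\phi$, and therefore supported on the finite set $W_0:=\supp\phi$. This turns every pointwise infimum into an infimum of functions on a finite set: refining a partition only decreases the value (by $Q^\phi_sQ^\phi_u\le Q^\phi_{s+u}$), any two partitions have a common refinement, so the defining infimum is a decreasing directed limit. Moreover $P^\phi_t$ is order preserving and continuous from above, because each block composition $Q^\phi_{t_1}\cdots Q^\phi_{t_n}$ is (a composition of the monotone, dominated-convergence-continuous maps $g\mapsto P_sg$ and $g\mapsto g\wedge\phi$) and infima commute. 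The elementary items follow at once: (iii) holds as $Q^\phi_0f=f\wedge\phi=f$; (iv) because $g\mapsto P_sg\wedge\psi$ is monotone in both $g$ and the obstacle, so $Q^\phi_\sigma f\ge Q^\psi_\sigma g$ block by block and one takes infima; in (v) the upper bound $P^\phi_t f\le Q^\phi_t f\le P_tf$ is the one-block partition, while the lower bound is obtained by inducting $P_sg\ge e^{-s\Deg}g$ (a Gronwall argument on $\partial_s(e^{s\Deg}P_sg)\ge0$) through the blocks, using $e^{-s\Deg}g\le g\le\phi$ so the truncation never cuts below $e^{-s\Deg}g$. Finally (ii) follows since each block composition is an $\|\cdot\|_p$-contraction ($P_s$ is $\ell^p(V,m)$-contractive and $h\mapsto h\wedge\phi$ is pointwise $1$-Lipschitz), and the decreasing directed limit converges in every $\ell^p$ norm as it converges pointwise on the finite set $W_0$.

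For (i) I would use the partition calculus together with continuity from above. Concatenating a partition of $[0,s]$ with one of $[0,t]$ gives a partition of $[0,s+t]$, so $P^\phi_{t+s}f\le Q^\phi_\sigma(Q^\phi_\tau f)$ for all $\sigma,\tau$; taking infima and using that $P^\phi_t$ commutes with the decreasing net $Q^\phi_\tau f\downarrow P^\phi_s f$ yields $P^\phi_{t+s}f\le P^\phi_tP^\phi_s f$. Conversely, refining an arbitrary partition of $[0,t+s]$ so that it passes through the instant $s$ only decreases its value and exhibits it as $Q^\phi_\sigma Q^\phi_\tau f\ge P^\phi_t(Q^\phi_\tau f)\ge P^\phi_tP^\phi_s f$, and the infimum over all partitions gives the reverse inequality. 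Then (vi) follows by writing $P^\phi_{t+h}f=P^\phi_h(P^\phi_t f)$ and bounding $\|P^\phi_h g-g\|_\infty$ for $g\in[0,\phi]$ via the sandwich $e^{-h\Deg}g\le P^\phi_h g\le P_h g$, both sides being $O(h)$ on $W_0$ since $\Deg$ is finite there and $\|g\|_\infty\le\|\phi\|_\infty$ uniformly in $t$. For (vii), $u_{t+h}:=P^\phi_{t+h}f=P^\phi_h u_t\le P_h u_t$ gives $h^{-1}(u_{t+h}-u_t)\le h^{-1}(P_h u_t-u_t)\to\Delta u_t$ as $h\downarrow0$; the statement for $h\uparrow0$ comes the same way from $u_t=P^\phi_{-h}u_{t+h}\le P_{-h}u_{t+h}$, using (vi) to control the heat difference quotient along the moving base point $u_{t-k}\to u_t$.

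The crux is (viii). Fix $x_0,t_0$ with $u_{t_0}(x_0)<\phi(x_0)$; (vii) supplies the upper bound on the derivatives, so I must produce $\underline{\partial_t^\pm}u_{t_0}(x_0)\ge\Delta u_{t_0}(x_0)$. For a partition of $[0,h]$ with blocks $s_1,\dots,s_N$, partial sums $T_k=s_1+\cdots+s_k$, and truncation losses $r_k:=(P_{s_k}g_{k-1}-\phi)_+\ge0$ (where $g_0=u_{t_0}$, $g_k=P_{s_k}g_{k-1}-r_k$), unrolling the recursion yields the discrete Duhamel identity
\[
g_N = P_h u_{t_0}-\sum_{k=1}^N P_{h-T_k}r_k,
\]
with $g_N=Q^\phi_{s_N}\cdots Q^\phi_{s_1}u_{t_0}$. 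The key is that the loss reaching $x_0$ is $O(h^2)$ uniformly in the partition: each $r_k$ lives where the flow meets $\phi$, so $r_k(y)\le(P_{s_k}g_{k-1}(y)-g_{k-1}(y))_+=O(s_k)$ since $g_{k-1}\le\phi$; at $x_0$ one has $r_k(x_0)=0$ for $h$ small because $g_{k-1}(x_0)\le P_{T_k}u_{t_0}(x_0)<\phi(x_0)$; and the short-time heat kernel satisfies $P_{h-T_k}1_y(x_0)=O(h)$ for $y\sim x_0$ and $O(h^2)$ for farther $y$. Hence each term contributes $O(h\,s_k)$ and the total is $O(h\sum_k s_k)=O(h^2)$, giving $u_{t_0+h}(x_0)\ge P_h u_{t_0}(x_0)-O(h^2)$ and thus $\underline{\partial_t^+}u_{t_0}(x_0)\ge\Delta u_{t_0}(x_0)$; the left derivative is analogous. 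Making this loss estimate uniform over \emph{all} partitions, with correct bookkeeping of which vertices sit at the obstacle, is where the real work lies, and I expect this to be the principal obstacle of the whole theorem.

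Finally (ix): with $\phi=1_W$ one has $P^\phi_t f=0$ off $W$ (forced by $0\le P^\phi_t f\le1_W$), so on $W$ the full Laplacian in (vii)/(viii) equals $\Delta_W$. The bound $P^\phi_t f\ge e^{t\Delta_W}f$ follows by inducting $P_sg\ge e^{s\Delta_W}g$ through a partition, the extra cap at $1$ never activating since $e^{s\Delta_W}g\le1_W$. For the reverse, set $u=P^\phi_t f$, $v=e^{t\Delta_W}f$; then $\mu(t):=\max_x(u_t-v_t)$ is Lipschitz with $\mu(0)=0$, and at any positive maximizing vertex $x^\ast\in W$ the subsolution property (vii) gives $\overline{\partial_t^+}(u-v)(x^\ast)\le\Delta_W(u-v)(x^\ast)\le0$ by the discrete maximum principle, whence $\mu\le0$ and $u\le v$. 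Combining the two inequalities gives $P^\phi_t f=e^{t\Delta_W}f$, and the remaining items are routine once the finite-support reduction, the semigroup law, and the Duhamel identity are in place.
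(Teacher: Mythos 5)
Your proposal is correct in architecture and runs parallel to the paper on items (i)--(vii) (same one-block upper bound and $e^{-t\Deg}$ lower bound for (v), same Taylor-with-remainder argument for (vi) and (vii), same refinement/continuity-from-above argument for (i)). The genuine divergence is in (viii) and (ix). For (viii) the paper sidesteps your Duhamel bookkeeping entirely: it introduces the asymmetrized generator $\Delta^x$ (equal to $\Delta$ in the $x$-coordinate and to multiplication by $-\Deg(y)$ elsewhere) and the semigroup $P^x_s:=e^{s\Delta^x}$, observes that $P^x_s g\le\phi$ whenever $P_sg(x)\le\phi(x)$ (off $x$ it is pure exponential decay, so the obstacle is never hit), hence $P^x_t g\le Q^\phi_{s_1}\cdots Q^\phi_{s_n}g$ for \emph{every} partition by a one-line induction, and then reads off $\underline{\partial_t^\pm}P^\phi_tf(x)\ge\Delta P^\phi_tf(x)$ from a second-order Taylor expansion of $P^x_\eps$ at the single vertex $x$. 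This buys automatic uniformity over partitions, which is exactly the step you flag as the real work. For (ix) the paper takes $f\le\mathbf 1-\eps$ so the obstacle is never touched, applies (viii) to see that $P^\phi_tf$ solves the Dirichlet problem exactly, invokes uniqueness, and passes to general $f$ by $\|\cdot\|_\infty$-contractivity; your two-sided argument (domain monotonicity below, parabolic maximum principle via (vii) above) is a valid alternative that avoids the $\eps$-approximation but imports the standard fact $P_sg\ge e^{s\Delta_W}g$.

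One concrete soft spot in your (viii): the loss $r_k$ is \emph{not} supported near $x_0$ or even in $\supp\phi$ --- since $\phi=0$ off its support, $r_k$ includes all the heat that leaks out of $\supp\phi$, an infinite set --- so the per-vertex bounds $r_k(y)=O(s_k)$ and $P_{h-T_k}1_y(x_0)=O(h)$ cannot simply be multiplied and summed over $y$. The estimate does close, but via the symmetrized kernel: for non-negative $g$ with $g(x_0)=0$ one has $P_sg(x_0)=\frac1{m(x_0)}\sum_{y\ne x_0}P_s1_{x_0}(y)\,g(y)\,m(y)\le s\Deg(x_0)\|g\|_\infty$, using $\sum_{y\ne x_0}P_s1_{x_0}(y)m(y)\le s\Deg(x_0)m(x_0)$; combined with $\|r_k\|_\infty\le Cs_k$ (where $C$ depends only on $\phi$ and the finite set $B_1(\supp\phi)$, not on the partition) and $r_k(x_0)=0$ for small $h$, each term is $O(s_kh)$ and the total is $O(h^2)$ uniformly. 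With that repair your route is complete; the paper's $P^x_t$ comparison is simply the shorter path to the same inequality.
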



\begin{proof}
By definition, $Q_t^\phi$ maps $[0,\phi]$ to $[0,\phi]$, and so does $P_t^\phi$.
We prove the semigroup property $(i)$ by observing that
\begin{align*}
P_t^\phi P_s^\phi f =  \inf_{t_1 + \ldots + t_n = t} Q_{t_1}^\phi \ldots Q_{t_n}^\phi  \inf_{s_1 + \ldots + s_m = s} Q_{s_1}^\phi \ldots Q_{s_m}^\phi f &= 
\inf_{\substack{t_1 + \ldots + t_n = t \\ s_1 + \ldots + s_m = s}} Q_{t_1}^\phi \ldots Q_{t_n}^\phi Q_{s_1}^\phi \ldots Q_{s_m}^\phi 
\\
&= \inf_{t_1 + \ldots + t_{n+m} = t+s} Q_{t_1}^\phi \ldots Q_{t_{n+m}}^\phi f\\
&=P_{t+s}^\phi f.
\end{align*}
where the second equality follows from the montone convergence of $Q_t^\phi$ and the 
third equality follows from $Q_t^\phi Q_s^\phi f \leq Q_{t+s}^\phi f$.

To prove the contraction property $(ii)$, observe that for $p \in [1,\infty]$, $P_t$ is contracting on $\ell^p(V,m)$ so that
\[
\| Q_t^\phi f - Q_t^\phi g \|_p \leq \| P_t f - P_t g \|_p \leq \|f-g\|_p
\]
implying that
\[
\| P_t^\phi f - P_t^\phi g \|_p  \leq \|f-g\|_p.
\]
It is clear that $P_0^\phi f = f$ since $f \leq \phi$. This proves $(iii)$.

To prove $(iv)$, observe that $Q_t^\phi f \geq Q_t^\psi g$ whenever $\phi \geq \psi \geq f \geq g$ as $P_t$ is positivity preserving. This property is immediately transmitted to $P_t^\phi$.

To prove $(v)$, i.e., the lower and upper estimate of $P_t^\phi f$, we use
\[
e^{-t \Deg} f   \leq  Q_t^\phi f \leq P_t f
\]
which implies that
\[
e^{-t \Deg} f   \leq  P_t^\phi f \leq P_t f.
\]
as desired.

By using the estimates directly above and applying Taylor's theorem to $P_t f$ at $t=0$, we can deduce the existence of a constant $C_\phi>0$ such that for all $f \in [0,\phi]$ and $t\geq0$,
\begin{align*}
-C_\phi t \leq 1_{\supp \phi} (e^{-t\Deg} - 1) f \leq P^\phi_{t} f -  f \leq 1_{\supp \phi} (P_t -1) f \leq C_\phi t
\end{align*}
implying that $P_t^\phi f$ is Lipschitz in $t$ by using the semigroup property $(i)$, thus proving $(vi)$.

Furthermore, due to Taylor's theorem again, there exists a constant $C_\phi'>0$, such that for all $t>0$ and all $f \in [0,\phi]$,
\[
\frac 1 t \left( P^\phi_{t} f -  f   \right) \leq  1_{\supp \phi}  \frac 1 t (P_t f -f) \leq \Delta f + C_\phi't
\]
since $1_{\supp \phi} \Delta f \leq \Delta f$ for $f \in [0,\phi]$.  This directly implies that 
\[ \overline{\partial_t^+} P_t^\phi f = \limsup_{\eps \to 0^+} \frac 1 \eps \left({P_{\eps}^\phi P_t^\phi f - P_t^\phi f} \right) \leq \Delta P_t^\phi f \] 
by using the semigroup property and the fact that $P_t^\phi f \in [0,\phi]$.
Similarly,
\[ \overline{\partial_t^-} P_t^\phi f = \limsup_{\eps \to 0^-} \frac{1}{-\eps} \left( {P_{-\eps}^\phi P_{t+\eps}^\phi f - P_{t+\eps}^\phi f} \right) \leq \limsup_{\eps \to0^-} \left(\Delta P_{t+\eps}^\phi f - C_\phi' \eps \right) = \Delta P_{t}^\phi f. \] 
\eat{
Therefore,
\begin{align}
\frac1 \eps \left(P_t^\phi f - P_{t - \eps}^\phi f \right) \leq 1_{\supp \phi} \Delta P_{t-\eps}^\phi f + C_\phi'\eps \stackrel{\eps \to 0}{\longrightarrow} \Delta P_t^\phi f
\end{align}
This implies for $t>0$ and $f \in [0,\phi]$,
\[
\overline{\partial_t^-} P_t^\phi f \leq  \Delta P_t^\phi f.
\]
}
Putting these two inequalities together yields $(vii)$.

In order to do prove $(viii)$, we first define $\Delta^x: C(V)\to C(V)$ via
\[
\Delta^x f (y) := \begin{cases} \Delta f(x) & \mbox{if } y=x \\
-\Deg(y) f(y) & \mbox{otherwise}
\end{cases}
\]
and let $P_t^x := e^{t\Delta^x}$.
We remark that $\Delta^x$ is an asymmetrization of $\Delta$ and that $P_t^x u$  does \emph{not} give a solution to the Dirichlet problem $\partial_t u(x)=\Delta u(x)$ and $u=0$ on $V\setminus \{x\}$.  We also note that $P_t^x$ is positivity preserving.

\begin{lemma} \label{lem:Ptx}
Let $t>0$ and let $f \in [0,\phi]$.
If $P_s f(x) \leq \phi(x)$ for $0\leq s \leq t$, then
\[P_t^x f \leq P_t^\phi f. \]
\end{lemma}

\begin{proof}
Obviously, $P_s^x f \leq P_s f$.
Observe that $P_s^x f \leq \phi$ since
$P_s^x f(y) = e^{-s\Deg(y)}f(y) \leq f(y) \leq \phi(y)$  for $y \not = x$
and since $P_s^x f (x) = P_s f (x) \leq \phi(x)$ by assumption. Hence, $P_t^x f \leq Q_t^\phi f$.  Induction over $n$ for $s_1 + \ldots + s_n = t$ yields
\[
P_t^x f= P_{s_1 +\ldots + s_n}^xf = P_{s_1}^x \ldots P_{s_n}^x f\leq Q_{s_1}^\phi \ldots Q_{s_n}^\phi f \leq Q_t^\phi f. \]
Taking the infimum over all such $s_1, \ldots, s_n$ finishes the proof of the lemma.
\end{proof}

We now prove $(viii)$. Since we already proved $(vii)$,
it suffices to show that
\[
\underline{\partial_t^\pm} P_t^\phi f(x) \geq \Delta P_t^\phi f(x).
\]
whenever $P_t^\phi f(x) < \phi(x)$.

Due to Taylor's theorem with Lagrange remainder term, there exists a constant $C_\phi''>0$ such that for all $g \in [0,\phi]$ and all $\eps \in(0,t]$, there exists $\delta \in [0,\eps]$ such that
\begin{align}\label{eq:Taylor}
\frac{1}{\eps} (P_\eps^x g - g)(x) &= \Delta^x g(x) + \frac \eps 2  \partial_s^2 P_s^x g(x)|_{s=\delta} \nonumber
\\
&\geq \Delta g(x) - C_\phi'' \eps. 
\end{align}
since $\partial_s^2 P_s^x g(x) =  \Delta^x \Delta^x P^x_s g(x)$ is uniformly bounded on $[0,t]\times [0,\phi]$ and $x$ is fixed.

Choose $g=P_t^\phi f$.  Since we have assumed that $g(x)=P_t^\phi f(x) < \phi(x)$, by continuity of $P_s$, there exists $\eps \in (0,t]$ such that $P_sg(x) \leq \phi(x)$ for all $s \in [0,\eps]$.
By Lemma~\ref{lem:Ptx}, we then have that $P_\eps^x g \leq P_\eps^\phi g$
proving that $\underline{\partial_t^+} P_t^\phi f(x) \geq \Delta P_t^\phi f(x)$ by using \eqref{eq:Taylor}.

We next prove the same inequality for the left derivative.

We note that for $\eps<0$ small enough, we have that $P_{t+\eps}^\phi f(x)<\phi(x)$ so by using continuity of $P_s$ as above, we may apply Lemma~\ref{lem:Ptx} and \eqref{eq:Taylor} again to get
\begin{align*}
 \underline{\partial_t^-} P_t^\phi f(x) &\geq \liminf_{\eps \to 0^-} \frac{1}{-\eps} \left({P_{-\eps}^x P_{t+\eps}^\phi f(x) - P_{t+\eps}^\phi f(x)}\right) \\
 & \geq \liminf_{\eps \to 0^-} \left( \Delta P_{t +\eps}^\phi f(x) - C_\phi'' \eps \right) = \Delta P_{t}^\phi f(x).
 \end{align*}

\eat{
Choose $\eps$ small enough s.t. $P_s P_{t-\delta}^\phi f(x) < \phi(x)$ for $0< \delta,s < \eps$.
At vertex $x$, we calculate
\begin{align}
\frac{1}{\eps} \left( P_t^\phi f - P_{t-\eps}^\phi f \right) \geq \frac{1}{\eps} (P_\eps^x - 1) P_{t-\eps}^\phi f = \Delta P_{t-\eps}^\phi f - C_\phi''(\eps) \stackrel{\eps \to 0}{\longrightarrow} \Delta P_t^\phi f
\end{align}
where the estimate follows from Lemma~\ref{lem:Ptx}.
This proves $\underline{\partial_t^-} P_t^\phi f(x) \geq \Delta P_t^\phi f(x)$. }
Putting this together with $\underline{\partial_t^+} P_t^\phi f(x) \geq \Delta P_t^\phi f(x)$ and $(vii)$ yields $(viii)$.

We finally prove $(ix)$.
Let $\phi=1_W$.
First, we suppose that $f\leq \mathbf 1-\eps$. Then, $P_t^\phi f$ solves the Dirichlet problem $\partial_t u = \Delta u$ on $W$ and $u=0$ on $V \setminus W $ due to $(viii)$ as $P_sf(x) < 1 = 1_W(x)$ for $x \in W$ and $s \in [0,t]$ implies that $P_s^\phi f(x) < \phi(x)$ for all $x \in W$. 
This shows that $P_t^\phi f = e^{t\Delta_W} f$ since $e^{t\Delta_W} f$ is the unique solution to the Dirichlet problem.

For a general function $f \in [0,\phi]$, the claim follows by approximation since both $P_t^\phi$ and $e^{t\Delta_W}$
are contraction semigroups with respect to $\|\cdot\|_\infty$.
This proves $(ix)$ and finishes the proof of the theorem.
\end{proof}

\subsection{Cutoff semigroups and Ricci curvature}

Using the above observations, we can deduce a Lipschitz decay of the cutoff semigroup under lower curvature bounds.
We observe that the cutoff semigroup $P_t^\phi$ defined on $[0, \phi]=\{g \in C_c(V) : 0 \leq g \leq \phi \}$ canonically extends to functions $f: V \to [0,\infty)$ via $P_t^\phi f := P_t^\phi (f \wedge \phi)$.  In particular, $P_0^\phi f = f \wedge \phi$ whenever we do not assume that $f \leq \phi$.

\begin{lemma}\label{lem:CutoffRicci}
Let $G=(V,w,m)$ be a graph with $Ric(G) \geq K$.
Let $f:V \to [0,1]$ be non-constant, $T>0$ and $\phi:V \to [0,1]$ be compactly supported such that $\|\nabla \phi\|_\infty < \|\nabla f\|_\infty (1 \wedge e^{-KT})$. Then, for $t \in [0,T]$,
\[
\|\nabla P_t^\phi f\|_\infty \leq  e^{-Kt} \|\nabla f\|_\infty.  
\]
\end{lemma}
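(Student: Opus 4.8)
The plan is to prove the Lipschitz decay estimate via the perpetual cutoff method combined with a maximum principle argument, exploiting the fact that $P_t^\phi$ solves the heat equation wherever it lies strictly below the cutoff threshold (Theorem~\ref{thm:Cutoff}(viii)) and that Ollivier curvature controls $\nabla_{xy}\Delta f$ through Theorem~\ref{thm:nablaDelta}. First I would fix adjacent vertices $x\sim y$ and study the quantity
\[
G(t) := e^{Kt}\,\nabla_{xy} P_t^\phi f,
\]
with the aim of showing that its supremum over all edges stays below $\|\nabla f\|_\infty$. The idea is that, at an edge $(x,y)$ realizing (or nearly realizing) the maximal gradient at time $t$, the function $P_t^\phi f$ is $1$-Lipschitz after rescaling by $\|\nabla P_t^\phi f\|_\infty$, and the curvature bound $\kappa(x,y)\geq K$ feeds into an upper estimate for $\partial_t \nabla_{xy}P_t^\phi f$ in terms of $\nabla_{xy}\Delta P_t^\phi f$.

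The key steps, in order, would be: (1) reduce to a bound on $\sup_{x\sim y}\nabla_{xy}P_t^\phi f$ rather than over all pairs, using that the sup over adjacent vertices equals the Lipschitz constant. (2) Using Theorem~\ref{thm:Cutoff}(vii)--(viii), compute a one-sided upper derivative $\overline{\partial_t^+}\,\nabla_{xy}P_t^\phi f \leq \nabla_{xy}\Delta P_t^\phi f$ at an edge $(x,y)$ where the spatial supremum is attained at time $t$. (3) Invoke the Laplacian-based curvature formula: if $g := P_t^\phi f/\|\nabla g\|_\infty$ is renormalized so that the edge $(x,y)$ with $\nabla_{yx} g = \|\nabla g\|_\infty$ becomes a unit gradient direction, then Theorem~\ref{thm:nablaDelta} gives $\nabla_{xy}\Delta g \leq -\kappa(x,y)\|\nabla g\|_\infty \leq -K\|\nabla g\|_\infty$, whence $\nabla_{xy}\Delta P_t^\phi f \leq -K\,\|\nabla P_t^\phi f\|_\infty$. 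Combining with step (2) gives the differential inequality $\overline{\partial_t^+}\|\nabla P_t^\phi f\|_\infty \leq -K\|\nabla P_t^\phi f\|_\infty$, which integrates to the claimed exponential decay.

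The main obstacle I anticipate is \emph{controlling the boundary effect of the cutoff}, i.e., ensuring that the maximizing edge $(x,y)$ at time $t$ actually lies where $P_t^\phi f < \phi$ so that the heat equation (viii) genuinely holds there and Theorem~\ref{thm:nablaDelta} is applicable. This is precisely where the hypothesis $\|\nabla\phi\|_\infty < \|\nabla f\|_\infty(1\wedge e^{-KT})$ must be used: it guarantees that $\phi$ is strictly flatter than the decaying bound $e^{-Kt}\|\nabla f\|_\infty$ for all $t\in[0,T]$, so that an edge carrying the maximal gradient of $P_t^\phi f$ cannot be pinned at the cutoff (an edge where $P_t^\phi f$ touches $\phi$ would inherit a gradient no larger than $\|\nabla\phi\|_\infty$, contradicting maximality). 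A secondary technical point is the lack of classical differentiability of $\|\nabla P_t^\phi f\|_\infty$; I would handle this by working with the $\limsup$-derivative $\overline{\partial_t^+}$ as in Theorem~\ref{thm:Cutoff}, applying a Gr\"onwall-type lemma valid for upper Dini derivatives, and noting that the supremum over edges is attained since $\phi$ (hence the support of $P_t^\phi f$) is compact. Finally, I would need the rescaling in step (3) to respect the $Lip(1)\cap C_c(V)$ requirement of Theorem~\ref{thm:nablaDelta}, which again follows from compact support of $\phi$.
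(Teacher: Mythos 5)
Your proposal is correct and follows essentially the same route as the paper: a maximum-principle argument on $e^{Kt}\nabla P_t^\phi f$ over the (finite) support of $\phi$, using Theorem~\ref{thm:Cutoff}~(vii) at the high endpoint and (viii) at the low endpoint of a maximizing edge, the rescaling trick to feed $\|\nabla P_t^\phi f\|_\infty$ into Theorem~\ref{thm:nablaDelta}, and the hypothesis on $\|\nabla\phi\|_\infty$ exactly as you describe to rule out the maximizing edge being pinned at the cutoff (the paper phrases the Gr\"onwall step as a contradiction at a space-time maximum of $e^{Kt}\nabla_{yx}P_t^\phi f$). The only blemish is an index inconsistency between your steps (2) and (3) about which endpoint of the maximizing edge is the ``high'' one (Theorem~\ref{thm:nablaDelta} with $\nabla_{yx}g=1$ gives $\nabla_{xy}\Delta g\geq\kappa(x,y)$, not $\leq-\kappa(x,y)\|\nabla g\|_\infty$); once a consistent orientation is fixed, the computation goes through as intended.
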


\begin{proof}
Without loss of generality, we can assume that $\kappa(x,y) > K$ for all $x,y \in V$ instead of $\kappa(x,y) \geq K$.  Furthermore, as we assume that $f$ is non-constant, it follows that $\|\nabla f\|_\infty>0$.

For $t \in [0,T], x,y \in V$ with $x \sim y$ we define
\begin{align*}
F(t,x,y) := e^{Kt} \nabla_{yx} P_t^\phi f.
\end{align*}
We aim to show that $F \leq \|\nabla f\|_\infty$.
Suppose not.

Since the support of $P_t^\phi f$ is contained in the finite support of $\phi$, the continuous function $F$ attains its maximum $F_{\max}$ at some $(t_0,x_0,y_0)$ where $y_0$ is in the support of $\phi$.  Therefore, $F(t_0,x_0,y_0) = F_{\max}  > \| \nabla f \|_\infty$.

Since \[F(0,x_0,y_0) = \nabla_{y_0x_0}(f \wedge \phi) \leq \|\nabla f\|_\infty \vee \|\nabla \phi\|_\infty = \|\nabla f\|_\infty < F(t_0,x_0,y_0),\]
we obtain that $t_0>0$.
Furthermore, observe that
\[
P_{t_0}^\phi f(x_0) < \phi(x_0)
\]
since otherwise 
\[
\nabla_{y_0x_0} P_{t_0}^\phi f \leq \nabla_{y_0x_0} \phi < \|\nabla f\|_\infty (1 \wedge e^{-KT}) \leq \|\nabla f\|_\infty e^{-Kt_0}
\]
which would imply that $F(t_0,x_0,y_0) < \|\nabla f\|_\infty$.

This yields $\partial_t P_{t}^\phi f(x_0)|_{t=t_0} = \Delta P_{t_0}^\phi f(x_0)$ due to Theorem~\ref{thm:Cutoff} $(viii)$. 
Moreover at $y_0$, Theorem~\ref{thm:Cutoff} $(vii)$ gives that $\overline{\partial_t^-} P_{t}^\phi f(y_0)|_{t=t_0} \leq  \Delta P_{t_0}^\phi f(y_0)$.
Subtracting yields
\begin{align*}
\underline{\partial_t^-} \nabla_{x_0y_0} P_t^\phi f |_{t=t_0} \geq \nabla_{x_0y_0} \Delta P_{t_0}^\phi f.
\end{align*}

Observe that $\|\nabla P_{t_0}^\phi f\|_\infty \leq F_{\max}e^{-Kt_0}$ and $\nabla_{y_0x_0}P_{t_0}^\phi f = F_{\max}e^{-Kt_0}$ due to maximality. Hence, due to Theorem~\ref{thm:nablaDelta}, we get that $F_{\max}e^{-Kt_0} \cdot \kappa(x_0,y_0) \leq \nabla_{x_0y_0} P_{t_0}^\phi f.$  Therefore, by our curvature assumption,
\begin{align*}
\underline{\partial_t^-}  \nabla_{x_0y_0} P_t^\phi f |_{t=t_0} \geq \nabla_{x_0y_0}\Delta P_{t_0}^\phi f \geq  F_{\max}e^{-Kt_0} \cdot \kappa(x_0,y_0) > F_{\max}e^{-Kt_0} K.
\end{align*}
Thus,
\begin{align*}
\overline{\partial_t^-}  F(t_0,x_0,y_0) &= \overline{\partial_t^-} \left(e^{Kt}\nabla_{y_0x_0} P_{t}^\phi f\right)|_{t=t_0} \\
&= KF_{\max} - e^{Kt_0} \underline{\partial_t^-} \nabla_{x_0y_0} P_t^\phi f |_{t=t_0} \\
&< KF_{\max} - KF_{\max} =0.
\end{align*}

Due to maximality in time of $F$ at $(t_0,x_0,y_0)$, since $t_0>0$, we have
$\overline{\partial_t^-}  F(t_0,x_0,y_0) \geq 0$ which contradicts the above inequality. 
Hence, $F\leq \|\nabla f\|_\infty$ which finishes the proof.
\end{proof}

\begin{lemma}\label{lem:RicImpliesGradient}
Let $G=(V,w,m)$ be a graph with $Ric(G) \geq K$.
Let $f : V \to [0,1]$ be non-constant. Then, for all $t>0$,
\[
\|\nabla P_t f\|_\infty \leq  e^{-Kt} \|\nabla f\|_\infty.  
\]
\end{lemma}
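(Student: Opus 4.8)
The plan is to deduce the estimate for the free semigroup $P_t$ by exhausting the graph with cutoff functions of vanishing gradient and passing to the limit in Lemma~\ref{lem:CutoffRicci}. Fix $T>0$; since $T$ is arbitrary it suffices to prove the bound for $t\in[0,T]$. As $f$ is non-constant, $\|\nabla f\|_\infty>0$. I would fix a base vertex $x_0$ and, for $n\in\IN$, use the slowly decaying tents
\[
\phi_n(z) := \left[ 1 \wedge (1 - \tfrac1n(d(x_0,z)-n))\right]_+,
\]
so that $\phi_n\equiv 1$ on $W_n:=B_n(x_0)$, $\phi_n$ is supported on the finite set $B_{2n}(x_0)$, $\|\nabla\phi_n\|_\infty\le 1/n$, and $\phi_n\uparrow\mathbf 1$. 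For $n$ large we have $\|\nabla\phi_n\|_\infty<\|\nabla f\|_\infty(1\wedge e^{-KT})$, so Lemma~\ref{lem:CutoffRicci} applies and gives, for all $t\in[0,T]$ and all $x\sim y$,
\[
|\nabla_{xy}P_t^{\phi_n}f|\le\|\nabla P_t^{\phi_n}f\|_\infty\le e^{-Kt}\|\nabla f\|_\infty .
\]

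The crux is to show $P_t^{\phi_n}f\to P_tf$ pointwise. The upper bound $P_t^{\phi_n}f=P_t^{\phi_n}(f\wedge\phi_n)\le P_t(f\wedge\phi_n)\le P_tf$ is immediate from Theorem~\ref{thm:Cutoff}$(v)$ and positivity of $P_t$. For the lower bound I would first observe that non-constancy gives $f\not\equiv\mathbf 1$, and since the connected graph makes $P_s$ positivity improving, $P_s(\mathbf 1-f)>0$, whence $P_sf<P_s\mathbf 1\le\mathbf 1$ for $s>0$. Thus on $W_n$, where $\phi_n=1$, we have $P_s^{\phi_n}f\le P_sf<1=\phi_n$, so Theorem~\ref{thm:Cutoff}$(viii)$ shows that $s\mapsto P_s^{\phi_n}f$ solves the heat equation on $W_n$ for $s\in[0,t]$.

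Because $P_s^{\phi_n}f\ge 0$, for $x\in W_n$ one has
\[
\Delta P_s^{\phi_n}f(x)=\Delta_{W_n}P_s^{\phi_n}f(x)+\tfrac1{m(x)}\sum_{y\notin W_n}w(x,y)P_s^{\phi_n}f(y)\ge\Delta_{W_n}P_s^{\phi_n}f(x),
\]
so $P_s^{\phi_n}f$ is a supersolution of the Dirichlet heat equation on $W_n$ with initial data $f\wedge\phi_n\ge f1_{W_n}$. The minimum principle on the finite set $W_n$ then yields $P_s^{\phi_n}f\ge e^{s\Delta_{W_n}}(f1_{W_n})$ on $W_n$, the right-hand side being identified via Theorem~\ref{thm:Cutoff}$(ix)$. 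Since $W_n\uparrow V$, the Dirichlet heat semigroups increase to the minimal heat semigroup, i.e. $e^{s\Delta_{W_n}}(f1_{W_n})\to P_sf$ (see \cite{wojciechowski2008heat, keller2012dirichlet}); squeezing between this and the upper bound gives $P_t^{\phi_n}f\to P_tf$ pointwise.

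Finally, since $\nabla_{xy}$ depends only on the two values at $x$ and $y$, pointwise convergence gives $\nabla_{xy}P_t^{\phi_n}f\to\nabla_{xy}P_tf$ for each edge; passing to the limit in the cutoff estimate and taking the supremum over edges yields $\|\nabla P_tf\|_\infty\le e^{-Kt}\|\nabla f\|_\infty$ for $t\in[0,T]$, and letting $T\to\infty$ completes the proof. I expect the main obstacle to be the convergence $P_t^{\phi_n}f\to P_tf$, and in particular the lower comparison with the Dirichlet semigroups on the exhaustion $W_n$ together with their convergence to $P_t$; the gradient transfer and the choice of cutoff are routine once this is in hand.
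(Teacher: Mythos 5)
Your proof is correct and follows essentially the same route as the paper: exhaust $V$ by finite sets, apply Lemma~\ref{lem:CutoffRicci} with cutoff functions of sufficiently small gradient, and squeeze $P_t^{\phi_n}f$ between the Dirichlet semigroup $e^{t\Delta_{W_n}}(f1_{W_n})$ and $P_tf$ before passing to the limit edgewise. The only difference is that you re-derive the lower comparison $e^{t\Delta_{W_n}}(f1_{W_n})\le P_t^{\phi_n}f$ by hand via a positivity-improving and minimum-principle argument, whereas the paper gets it in one line from the monotonicity property of Theorem~\ref{thm:Cutoff}$(iv)$ combined with $(ix)$, using $1_{W_n}\le\phi_n$.
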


\begin{proof}
Let $T>0$. We prove the statement for all $t \in [0,T]$ which will prove the lemma.
Let $W_1 \subset W_2 \subset \ldots$ be finite subsets of $V$ such that $\bigcup W_n = V$.
Let $\phi_n :V \to [0,1]$ be functions such that $\phi_n = 1$ on $W_n$ and such that $\|\nabla \phi_n\|_\infty < \|\nabla f\|_\infty(1 \wedge e^{-KT})$.
Let $x\neq y \in V$ and $t \in [0,T]$. For all $n \in \IN$, Lemma~\ref{lem:CutoffRicci} yields
\[
\nabla_{xy} P_t^{\phi_n} f \leq  e^{-Kt} \|\nabla f\|_\infty.  
\]
Due to Theorem~\ref{thm:Cutoff} $(ix)$, we have $e^{t\Delta_{W_n}}= P_t^{1_{W_n}}$ on $[0,1_{W_n}]$, and since $1_{W_n} \leq \phi_n$, Theorem~\ref{thm:Cutoff} $(iv)$ yields
\[
e^{t\Delta_{W_n}}f = P_t^{1_{W_n}} f\leq P_t^{\phi_n} f \leq P_t f.
\]
Since $e^{t\Delta_{W_n}}f$ converges to $P_t f$ pointwise as $n \to \infty$, we infer that
\[
\nabla_{xy} P_t f = \lim_{n \to \infty} \nabla_{xy} P_t^{\phi_n} f \leq e^{-Kt} \|\nabla f\|_\infty.
\] 
Now the claim follows immediately since $x,y$ and $t$ are arbitrary.
\end{proof}

Using semigroup methods, we can now show that a lower curvature bound implies stochastic completeness. We want to point out that we will later independently prove stochastic completeness under even weaker assumptions using the Laplacian comparison principle (see Theorem~\ref{thm:StochComplete}).
\begin{lemma}\label{lem:StochComplete}
If $G=(V,w,m)$ is a graph with $Ric(G) \geq K$, then $G$ is stochastically complete.
\end{lemma}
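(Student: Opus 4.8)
The plan is to deduce stochastic completeness, i.e.\ $P_t\mathbf 1 = \mathbf 1$, from the gradient estimate in Lemma~\ref{lem:RicImpliesGradient}. Since $P_t$ is positivity preserving and $f\le\mathbf 1$ implies $P_tf\le P_t\mathbf 1\le\mathbf 1$, one always has $P_t\mathbf 1\le\mathbf 1$, so it suffices to prove the reverse inequality. The idea is to apply the Lipschitz decay estimate not to $\mathbf 1$ itself (whose gradient is trivially zero, making the estimate vacuous) but to a family of functions in $[0,1]$ that increase to $\mathbf 1$ while having Lipschitz constants tending to $0$. The estimate then forces $P_t\mathbf 1$ to have vanishing gradient, and a short argument using the heat equation pins the resulting spatial constant to $1$.

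Concretely, I would first dispose of the finite case, where $\Delta\mathbf 1=0$ gives $P_t\mathbf 1=e^{t\Delta}\mathbf 1=\mathbf 1$ at once. Assuming $G$ infinite, fix $x_0\in V$ and for $R>0$ set $f_R := \left(1 - d(x_0,\cdot)/R\right)_+$. Then $f_R:V\to[0,1]$ is non-constant, compactly supported on $B_R(x_0)$, and satisfies $\|\nabla f_R\|_\infty \le 1/R$, since $z\mapsto(1-z/R)_+$ is $(1/R)$-Lipschitz and $d(x_0,\cdot)\in Lip(1)$. Lemma~\ref{lem:RicImpliesGradient} then yields, for every $t>0$ and every $R>0$,
\[
\|\nabla P_t f_R\|_\infty \le e^{-Kt}\|\nabla f_R\|_\infty \le \frac{e^{-Kt}}{R}.
\]

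Next I would let $R\to\infty$. As $f_R\nearrow\mathbf 1$ pointwise and $P_t$ is the minimal nonnegative semigroup, monotone convergence gives $P_tf_R\to P_t\mathbf 1$ pointwise. Hence, for adjacent $x\sim y$,
\[
|\nabla_{xy}P_t\mathbf 1| = \lim_{R\to\infty}|\nabla_{xy}P_tf_R| \le \lim_{R\to\infty}\frac{e^{-Kt}}{R} = 0,
\]
so that $P_t\mathbf 1$ is constant on $V$ for each $t>0$ by connectedness. Writing $P_t\mathbf 1=c(t)\mathbf 1$, the function $c$ is continuous with $c(0)=1$, and since $u(x,t)=P_t\mathbf 1(x)$ solves $\partial_t u=\Delta u$ in $C^1$, local finiteness gives $c'(t)=\Delta(c(t)\mathbf 1)=c(t)\Delta\mathbf 1=0$ on $(0,\infty)$. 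Therefore $c$ is constant there, and continuity at $t=0$ forces $c\equiv 1$, i.e.\ $P_t\mathbf 1=\mathbf 1$.

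The main obstacle, and the only genuinely delicate point, is the interchange of limits in the passage $R\to\infty$: one must justify both the monotone convergence $P_tf_R\to P_t\mathbf 1$ for the minimal semigroup and the resulting convergence of the difference quotients $\nabla_{xy}P_tf_R$. Everything else---the choice of the tent functions $f_R$, the trivial finite case, and the final ordinary differential equation argument for $c(t)$---is routine. I would also emphasize that the sign of $K$ plays no role here, since $e^{-Kt}/R\to 0$ whether $K$ is positive, negative, or zero.
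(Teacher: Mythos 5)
Your proposal is correct and follows essentially the same route as the paper: the paper also applies Lemma~\ref{lem:RicImpliesGradient} to a family of $[0,1]$-valued non-constant functions $\eta_i \to \mathbf 1$ with $\|\nabla \eta_i\|_\infty \to 0$ and passes to the limit to conclude $\|\nabla P_t\mathbf 1\|_\infty = 0$, hence $P_t\mathbf 1 = \mathbf 1$. You merely make the construction explicit (the tent functions $f_R$) and spell out the final step from constancy of $P_t\mathbf 1$ to $P_t\mathbf 1=\mathbf 1$, which the paper leaves as a one-line remark.
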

\begin{rem}
We note that the proof closely follows the proof of stochastic completeness under a Bakry-Emery curvature bound in (\cite[Theorem~1.2]{hua2017stochastic}).
\end{rem}
\begin{proof}
Let $\eta_i:V \to [0,1]$ be non-constant such that $\eta_i \to 1$ pointwise  and $\|\nabla \eta_i\|_\infty \to 0$ as $i \to \infty$. Then for all $x\neq y$ and $t>0$, Lemma~\ref{lem:RicImpliesGradient} implies that
\begin{align*} 
\nabla_{xy} P_t \mathbf{1} = \lim_{i \to \infty} \nabla_{xy} P_t \eta_i \leq \lim_{i \to \infty} e^{-Kt}\|\nabla \eta_i\|_\infty =0.
\end{align*} 
Hence, $\|\nabla P_t  \mathbf{1}\|_\infty = 0$ which implies stochastic completeness as $P_0 \mathbf{1}=\mathbf{1}$.
\end{proof}

\subsection{Semigroup characterization}

Using Theorem~\ref{thm:nablaDelta} and Lemma~\ref{lem:RicImpliesGradient}, we now give a heat semigroup characterization of lower curvature bounds.
\begin{theorem}[Gradient of the semigroup]\label{thm:gradientGraphs}
	Let $G=(V,w,m)$ be a graph and let $K\in\IR$.  The following statements are equivalent:
	\begin{enumerate}[(1)]
		\item $Ric(G) \geq K$.
		\item For all $f \in C_c(V)$ and all $t>0$ $$\|\nabla P_t f\|_\infty \leq  e^{-Kt}\|\nabla f\|_\infty.$$ 
		\item  For all $f \in \ell_\infty(V)$ and all $t>0$ $$\| \nabla P_t f \|_\infty \leq e^{-Kt}  \| \nabla f \|_\infty.$$ 
		\item  $G$ is stochastically complete and for all $x,y \in V$ and all $t>0$
		\begin{align*}
		W(p^x_t,p^y_t) \leq e^{-Kt} d(x,y)
		\end{align*}
		where $p_t^x := \frac m {m(x)} P_t 1_x$ denotes the heat kernel.
	\end{enumerate}
	
\end{theorem}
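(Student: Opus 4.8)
The plan is to establish the four statements as equivalent through the cycle $(1)\Rightarrow(3)\Rightarrow(4)\Rightarrow(2)\Rightarrow(1)$; note that $(3)\Rightarrow(2)$ is immediate from $C_c(V)\subset\ell_\infty(V)$, so this cycle is the efficient route. The two heavy analytic ingredients are already in hand: that a lower curvature bound forces Lipschitz decay of the semigroup (Lemma~\ref{lem:RicImpliesGradient}) and that it forces stochastic completeness (Lemma~\ref{lem:StochComplete}). Hence the remaining work is (a) upgrading the gradient estimate from $[0,1]$-valued functions to all bounded functions, (b) passing between the gradient estimate and the Wasserstein estimate by Kantorovich duality, and (c) the genuinely new converse, which recovers the curvature bound by differentiating the gradient estimate at $t=0$.

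For $(1)\Rightarrow(3)$ I would first invoke Lemma~\ref{lem:StochComplete} to get $P_t\mathbf 1=\mathbf 1$, so that $\nabla P_t\mathbf 1=0$ and $P_t$ commutes with affine shifts. For non-constant $f\in\ell_\infty(V)$ with $\|\nabla f\|_\infty<\infty$, set $a=\inf f$, $b=\sup f$ and $g=(f-a)/(b-a)\in[0,1]$; then Lemma~\ref{lem:RicImpliesGradient} applied to $g$, together with $P_tf=(b-a)P_tg+a$, yields $\|\nabla P_tf\|_\infty=(b-a)\|\nabla P_tg\|_\infty\le e^{-Kt}\|\nabla f\|_\infty$. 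The cases $f$ constant and $\|\nabla f\|_\infty=\infty$ are trivial.

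For the steps involving (4) I would use the symmetry of the heat kernel to establish, for $g\in Lip(1)\cap\ell_\infty(V)$, the identity $\sum_z g(z)(p_t^x(z)-p_t^y(z))=P_tg(x)-P_tg(y)$. Kantorovich duality then gives $W(p_t^x,p_t^y)=\sup_{g\in Lip(1)\cap\ell_\infty}(P_tg(x)-P_tg(y))=d(x,y)\,\sup_{g}\nabla_{xy}P_tg$, an identity that is meaningful only when $p_t^x,p_t^y$ are probability measures, i.e.\ under stochastic completeness. Thus $(3)\Rightarrow(4)$ follows at once, with stochastic completeness deduced from $(3)$ exactly as in Lemma~\ref{lem:StochComplete} (here $P_t\mathbf 1$ becomes a spatially constant solution of the heat equation, forcing $P_t\mathbf 1\equiv 1$), while $(4)\Rightarrow(2)$ follows by reading the duality backwards: the Wasserstein bound gives $\|\nabla P_tg\|_\infty\le e^{-Kt}$ for every $g\in Lip(1)\cap\ell_\infty$, and rescaling covers all $f\in C_c(V)$.

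The crux is $(2)\Rightarrow(1)$. Fix an edge $x_0\sim y_0$ and, using Theorem~\ref{thm:nablaDelta}, take any $f\in Lip(1)\cap C_c(V)$ with $\nabla_{y_0x_0}f=1$, so that $\|\nabla f\|_\infty=1$. The function $h(t):=\nabla_{y_0x_0}P_tf$ satisfies $h(0)=1$ and, by $(2)$, $h(t)\le\|\nabla P_tf\|_\infty\le e^{-Kt}$; comparing one-sided difference quotients at $t=0$ gives $h'(0^+)\le-K$. Since $f\in C_c(V)$ on a locally finite graph, $\partial_tP_tf|_{t=0}=\Delta f$, whence $h'(0^+)=\nabla_{y_0x_0}\Delta f=-\nabla_{x_0y_0}\Delta f$, and therefore $\nabla_{x_0y_0}\Delta f\ge K$. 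Letting $f$ range over all admissible test functions yields $\kappa(x_0,y_0)\ge K$ via Theorem~\ref{thm:nablaDelta}, and since it suffices to check adjacent vertices, $Ric(G)\ge K$. The main obstacle I anticipate is justifying the $t=0$ differentiation cleanly, namely that $t\mapsto P_tf(z)$ is right-differentiable at $0$ with derivative $\Delta f(z)$ for $f\in C_c(V)$ and that this passes through the one-sided inequality $h'(0^+)\le -K$; this is exactly where local finiteness and the identification of $\Delta$ as the generator of $P_t$ on $C_c(V)$ are needed.
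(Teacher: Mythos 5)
Your proposal is correct and follows essentially the same route as the paper: the same two lemmas (Lemma~\ref{lem:RicImpliesGradient} and Lemma~\ref{lem:StochComplete}) carry $(1)\Rightarrow(3)$, the same Kantorovich-duality identity $\sum_z g(z)(p_t^x(z)-p_t^y(z))=P_tg(x)-P_tg(y)$ links $(3)$ and $(4)$, and the converse $(2)\Rightarrow(1)$ is obtained exactly as in the paper by differentiating $\nabla_{y_0x_0}P_tf\le e^{-Kt}$ at $t=0$ and invoking Theorem~\ref{thm:nablaDelta}. The only differences are organizational (you close the loop through $(4)$ rather than proving $(3)\Leftrightarrow(4)$ separately) and that you spell out the affine rescaling in $(1)\Rightarrow(3)$, which the paper leaves implicit.
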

\begin{rem}
We note that stochastic completeness is needed to state (4) since the Wasserstein distance $W$ is only defined on probability measures and $p_t^x$ is a probability measure only in the case of stochastic completeness.
\end{rem}

\begin{proof}

We first prove $(3) \Leftrightarrow (4)$.

For all bounded $1-$Lipschitz functions $f$, we have
\begin{align}
\int f dp^x_t -  \int f dp^y_t = \sum_{z \in V}f(z)\left(p_t^x(z)-p_t^y(z)\right) = P_t f(x) - P_t f(y) \label{eq:Wassertstein-Lipschitz}.
\end{align}
By definition, assertion $(4)$ is equivalent to
\begin{align*}
\int f dp^x_t -  \int f dp^y_t  \leq e^{-Kt} d(x,y)
\end{align*}
for all bounded Lipschitz functions which is equivalent to assertion $(3)$ due to
(\ref{eq:Wassertstein-Lipschitz}).
It is also clear that $(3)$ implies stochastic completeness by noting that $(3)$ implies that $\| \nabla P_t \mathbf{1} \|_\infty =0$.

The implication $(1)\Rightarrow (3)$ follows from Lemma~\ref{lem:RicImpliesGradient} if $f$ is non-constant and Lemma~\ref{lem:StochComplete} if $f$ is constant.

The implication $(3)\Rightarrow (2)$ is trivial.

We finally prove $(2) \Rightarrow (1)$.
Fix $x\sim y \in V$.
By Theorem~\ref{thm:nablaDelta},
it suffices to show that
$$\inf_{\substack {f\in Lip(1) \cap C_c(V)\\ \nabla_{yx}f=1}}  \nabla_{xy} \Delta f \geq K.$$


Let $f \in Lip(1)\cap C_c(V)$ be such that $\nabla_{yx}f =1$. 
By assertion $(2)$, we have 
$$\nabla_{yx} P_t f  \leq e^{-Kt}.$$

Hence, by taking the time derivative at $t=0$, 
\begin{align*}
\nabla_{xy} \Delta f = - \partial^+_t \nabla_{yx} P_t f |_{t=0} 
= \lim_{t\to 0^+} \frac 1 t \left( \nabla_{yx} f - \nabla_{yx} P_t f \right)
\geq \limsup_{t\to 0^+} \frac 1 t \left(1 - e^{-Kt} \right)
= K
\end{align*}
which proves assertion $(1)$ of the theorem since $f$ is arbitrary. 
\end{proof}

\section{Laplacian comparison principle}\label{sec:LaplaceCompare}

The classical Laplacian comparison theorem on manifolds compares the Laplacian of the distance function on the manifold to that of a model space with constant curvature. This means, for a given Riemannian manifold $M$ with Ricci curvature bounded from below by $K$ and for the model space $H$ with constant Ricci curvature $K$, one has
$$
\Delta^M d(x_0^M,\cdot) \leq \Delta^H d(x_0^H,\cdot).
$$
For a survey of comparison geometry of Ricci curvature on manifolds see \cite{zhu1997comparison}.

We give a discrete analogue of the above theorem in the sense that we upper bound the Laplacian of the distance function.
As a replacement of a model space, we will associate a birth-death chain to a given graph having the same sphere measure (see Section~\ref{sec:LaplaceCompareAndLineGrpahs}).  
We will also introduce a new quantity called the sphere curvature which depends only on the distance to a fixed vertex instead of considering all curvatures between neighbors.

We first give a discrete Laplacian comparison principle without a model space by explicitly estimating $\Delta d(x_0,\cdot)$.  
Even though the proof is a one-liner in light of Theorem~\ref{thm:nablaDelta}, the following discrete Laplacian comparison theorem, and its extension to the case of decaying curvature, turns out to be a foundation of a variety of applications, such as results concerning stochastic completeness and improved diameter bounds.

\begin{theorem}[Laplacian comparison]\label{thm:LaplaceCompare}
	Let $G=(V,w,m)$ be a graph. Let $x_0 \in V$ and suppose that $\kappa(x_0,\cdot) \geq K$ for some $K\in \IR$. Then,
	\begin{align*}
	\Delta d(x_0,\cdot) \leq \Deg(x_0) - Kd(x_0,\cdot).
	\end{align*}
	
	\begin{proof}
		Let $y \in V$, $y \not=x_0$, and set $f:=d(x_0,\cdot)$. Note that $f \in Lip(1)$ and $\nabla_{yx_0}f=1$ so that due to Theorem~\ref{thm:nablaDelta}, we have
		\begin{align*}
		K \leq \kappa(x_0,y) \leq \nabla_{x_0y} \Delta f = \frac{\Delta f(x_0)- \Delta f(y)}{d(x_0,y)} =  \frac{\Deg(x_0) - \Delta f(y)}{d(x_0,y)}.
		\end{align*}
		Rearranging yields the claim.
	\end{proof}
\end{theorem}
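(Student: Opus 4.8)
The plan is to apply Theorem~\ref{thm:nablaDelta} directly to the distance function $f := d(x_0,\cdot)$. The key observation is that this $f$ is precisely the kind of test function over which the infimum in the limit-free curvature formula is taken: it is $1$-Lipschitz (since $d(x_0,\cdot)$ satisfies the triangle inequality, $|d(x_0,u)-d(x_0,v)| \leq d(u,v)$), and for any neighbor direction toward $x_0$ it realizes the gradient constraint. So I would fix an arbitrary vertex $y \neq x_0$ and exploit that $\nabla_{yx_0} f = \frac{f(y)-f(x_0)}{d(x_0,y)} = \frac{d(x_0,y)-0}{d(x_0,y)} = 1$, which is exactly the normalization required in the infimum.

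First I would set $f = d(x_0,\cdot)$ and record that $f \in \Lip(1)$ with $\nabla_{yx_0} f = 1$. Since Theorem~\ref{thm:nablaDelta} expresses $\kappa(x_0,y)$ as an infimum of $\nabla_{x_0 y}\Delta g$ over all admissible $g$, and our specific $f$ is admissible (up to the compact-support technicality, which is harmless here because only the local values near $x_0$ and $y$ enter $\nabla_{x_0y}\Delta f$), we immediately obtain the inequality $\kappa(x_0,y) \leq \nabla_{x_0y}\Delta f$. Combined with the hypothesis $\kappa(x_0,y) \geq K$, this chains to
\begin{align*}
K \leq \kappa(x_0,y) \leq \nabla_{x_0y}\Delta f = \frac{\Delta f(x_0) - \Delta f(y)}{d(x_0,y)}.
\end{align*}

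The final step is a routine rearrangement. I would compute $\Delta f(x_0) = \Delta d(x_0,\cdot)(x_0)$, noting that every neighbor $z$ of $x_0$ satisfies $d(x_0,z)=1$, so $\Delta f(x_0) = \frac{1}{m(x_0)}\sum_{z} w(x_0,z)\bigl(d(x_0,z)-0\bigr) = \Deg(x_0)$. Substituting this and clearing the positive denominator $d(x_0,y)$ yields $K\, d(x_0,y) \leq \Deg(x_0) - \Delta f(y)$, i.e. $\Delta f(y) \leq \Deg(x_0) - K\, d(x_0,y)$. Since $y$ was arbitrary, this is exactly the claimed pointwise bound $\Delta d(x_0,\cdot) \leq \Deg(x_0) - K\, d(x_0,\cdot)$.

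There is essentially no main obstacle here once Theorem~\ref{thm:nablaDelta} is available; the entire argument is a one-line application of the limit-free curvature formula to the distinguished test function $d(x_0,\cdot)$, which is why the author describes the proof as a one-liner. The only point requiring a moment's care is confirming that the distance function legitimately serves as a competitor in the infimum despite the compact-support restriction in the theorem statement, but this is immaterial since $\nabla_{x_0y}\Delta f$ depends only on the values of $f$ on the finite set $B_1(x_0)\cup B_1(y)$, where $f$ can be truncated to a compactly supported $1$-Lipschitz function without altering the relevant local quantities.
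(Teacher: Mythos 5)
Your proof is correct and follows exactly the paper's argument: apply Theorem~\ref{thm:nablaDelta} with the test function $f = d(x_0,\cdot)$, use $\Delta f(x_0) = \Deg(x_0)$, and rearrange. Your extra remark on handling the compact-support restriction by truncation is a valid and slightly more careful treatment of a point the paper leaves implicit.
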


We next give a Laplacian comparison principle for decaying curvature.
To do so, we need to measure the minimal curvature in terms of the distance to some fixed vertex $x_0$.
\begin{defn}[Sphere curvatures]\label{def:sphereCurvature}
Let $x_0 \in V$ be a fixed vertex. By abuse of notation, we denote $S_{r} := S_{r}(x_0)$ and $B_r := B_r(x_0)$.
For $r\geq 1$, we let the \emph{sphere curvatures} be given by
\begin{align*}
\kappa(r) := \min_{y\in S_{r}} \max_{\substack{x \in S_{r-1} \\ x\sim y}} \kappa(x,y).
\end{align*}
\end{defn}
\begin{rem}
We remark that
$$
\kappa(r) \geq \min_{x,y \in B_r} \kappa(x,y)
$$
which describes the the curvature decay in a simpler way. However, for all of our results it will suffice to have a lower bound on $\kappa(r)$.
\end{rem}
\begin{theorem}[Laplacian comparison and decaying curvature]\label{thm:LaplaceComparisonNonConst}
   	Let $G=(V,w,m)$ be a graph, $x_0 \in V$ and $f:= d(x_0,\cdot)$. Then,
	\begin{align*}
	\Delta f \leq  \Phi(f)
	\end{align*}
	with 
	\begin{align*}
	\Phi(R) := \Deg(x_0) - \sum_{r=1}^{R} \kappa(r)
	\end{align*}
	for $R\geq1$ and $\Phi(0) = \Deg(x_0)$.
	The inequality is sharp for birth-death chains where we take $x_0=0$ so that $f(r)=d(0,r)=r$.
\end{theorem}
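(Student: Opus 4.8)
The plan is to establish the bound $\Delta f(y) \le \Phi(R)$ for every $y$ with $R := d(x_0,y)$ by induction on $R$, letting the local curvature formula of Theorem~\ref{thm:nablaDelta} do all of the analytic work. Before starting, I would record the (slightly subtle) point that, although $f = d(x_0,\cdot) \notin C_c(V)$, the infimum in Theorem~\ref{thm:nablaDelta} still applies to it: since $\kappa(x,y)$ depends only on the induced subgraph $B_1(x) \cup B_1(y)$ and $\nabla_{xy}\Delta f$ depends only on the values of $f$ on that subgraph, one may cut $f$ off far away to obtain a competitor in $Lip(1) \cap C_c(V)$ that agrees with $f$ near $x$ and $y$, exactly as in the construction of $\widetilde f$ in the proof of Theorem~\ref{thm:nablaDelta}. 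Hence $\kappa(x,y) \le \nabla_{xy}\Delta f$ whenever $x \sim y$ with $d(x_0,x) < d(x_0,y)$, just as already used in Theorem~\ref{thm:LaplaceCompare}.

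For the base case $R=0$ we have $y = x_0$ and, since $f(z)=1$ for every neighbour $z$ of $x_0$, a direct computation gives $\Delta f(x_0) = \frac{1}{m(x_0)}\sum_{z\sim x_0} w(x_0,z) = \Deg(x_0) = \Phi(0)$. For the inductive step, fix $y \in S_R$ with $R \ge 1$ and assume $\Delta f \le \Phi(R-1)$ on $S_{R-1}$. Connectedness guarantees that the penultimate vertex of a geodesic from $x_0$ to $y$ lies in $S_{R-1}$ and is adjacent to $y$, so the inner maximum in Definition~\ref{def:sphereCurvature} is taken over a nonempty set; consequently there exists $x \in S_{R-1}$ with $x\sim y$ and $\kappa(x,y) \ge \kappa(R)$. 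Since $f \in Lip(1)$ and $\nabla_{yx}f = f(y)-f(x) = 1$, the localized form of Theorem~\ref{thm:nablaDelta} gives $\kappa(x,y) \le \nabla_{xy}\Delta f = \Delta f(x) - \Delta f(y)$, whence $\Delta f(y) \le \Delta f(x) - \kappa(R) \le \Phi(R-1) - \kappa(R) = \Phi(R)$, closing the induction.

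For the sharpness claim, consider a birth-death chain with $x_0 = 0$, where each sphere $S_r = \{r\}$ is a single vertex, so $\kappa(r) = \kappa(r-1,r)$ and Theorem~\ref{thm:line} gives $\kappa(r-1,r) = \Delta f(r-1) - \Delta f(r)$. Telescoping this relation from $r=1$ to $R$ and using $\Delta f(0) = \Deg(0)$ yields $\Delta f(R) = \Deg(0) - \sum_{r=1}^R \kappa(r) = \Phi(R)$, so the inequality becomes an equality.

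The proof is short because the heavy lifting is packaged into Theorem~\ref{thm:nablaDelta}; the only genuine point to get right is recognizing that the max-min form of the sphere curvature is precisely tailored to the induction — the inner maximum supplies a radial predecessor $x \in S_{R-1}$ whose curvature controls $\Delta f(x) - \Delta f(y)$, while the outer minimum makes $\kappa(R)$ a simultaneous lower bound for \emph{every} $y \in S_R$ — and verifying that such a predecessor always exists so that this maximum is over a nonempty set. The secondary technical care is the cutoff argument justifying the use of the $C_c(V)$ infimum of Theorem~\ref{thm:nablaDelta} for the non-compactly-supported distance function, which I expect to be the subtlest step to state cleanly.
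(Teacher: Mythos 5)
Your proof is correct and follows essentially the same route as the paper's: induction on the radius, using the max--min structure of the sphere curvature to produce a radial predecessor $x \in S_{R-1}$ with $\kappa(x,y) \geq \kappa(R)$, applying Theorem~\ref{thm:nablaDelta} to get $\kappa(x,y) \leq \Delta f(x) - \Delta f(y)$, and telescoping Theorem~\ref{thm:line} for sharpness on birth-death chains. Your explicit cutoff justification for applying the $C_c(V)$-infimum of Theorem~\ref{thm:nablaDelta} to the non-compactly-supported distance function is a point the paper leaves implicit, and it is handled correctly.
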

\begin{rem}
Note, in particular, that if $G$ is a graph with $Ric(G)\geq K$ and $H$ is a birth-death chain with $Ric(H)=K$ satisfying $\Deg_G(x_0)=\Deg_H(0)$, then 
\[ \Delta^G d(x_0, x) \leq \Delta^H d(0,R) \]
for all $x \in S_R$.  This makes the analogy to the statement concerning manifolds mentioned above precise.
\end{rem}

\begin{proof}
	We prove the result via induction over the radius $R$. The claim is clear for $R=0$ since $\Delta f(x_0) = \Deg(x_0)$.
	Let $R>0$ and let $y \in S_R$.  
	Let  $x \in S_{R-1}$ with $x\sim y$ be such that $\kappa(x,y)$ is maximal on $\{ (z,y) \ | \ z \in S_{R-1}, z \sim y \}$.
	Due to the definition of $\kappa(R)$ and Theorem~\ref{thm:nablaDelta}, we have
	\begin{align*}
	\kappa(R) \leq \kappa(x,y) \leq \nabla_{xy}\Delta f= \Delta f(x) - \Delta f(y).
	\end{align*}
	By the induction assumption, we have
	\begin{align*}
	\Delta f(x) \leq \Deg(x_0) - \sum_{r=1}^{R-1} \kappa(r).
	\end{align*}
	Rearranging and combining these yields
	\begin{align*}
	\Delta f(y) \leq \Delta f(x) -\kappa(R) \leq \Deg(x_0) - \sum_{r=1}^{R} \kappa(r)
	\end{align*}
	which proves the first statement.
	
	For birth-death chains, due to Theorem~\ref{thm:line}, we have for $r\geq 1$,
	\begin{align*}
	\kappa(r) = \kappa(r-1,r) = \Delta f(r-1) - \Delta f(r).
	\end{align*}
	Summing this up yields
	\begin{align*}
	\Delta f(R) =\Deg(0) - \sum_{r=1}^R \Delta f(r) = \Phi(R) = \Phi(f(R)).
	\end{align*}
	for all $R\geq 1$ which finishes the proof.
\end{proof}

\subsection{Curvature comparison and associated birth-death chains}\label{sec:LaplaceCompareAndLineGrpahs}

We now prove that the Laplacian comparison principle is compatible with the transition to birth-death chains.

\begin{defn}[Associated birth-death chain]\label{def:AssociatedlineGraph}
	Let $G=(V,w,m)$ be a graph with $x_0 \in V$ called the root vertex and let $S_r:=S_r(x_0)$.  We define the \emph{associated birth-death chain} $ \widetilde G = (\IN_0,\widetilde w, \widetilde m)$ via
	\begin{align*}
	\widetilde m(r) &:= m(S_r) \qquad \mbox{ and }\\ 
	\widetilde w(r,r+1)&:= w(S_r,S_{r+1}) := \sum_{\substack{x \in S_r \\y \in S_{r+1}}} w(x,y).
	\end{align*}
\end{defn}

\begin{theorem}[Associated birth-death chain and Laplacian comparison]\label{thm:AssociatedLaplaceComparisonLine}
	Let $G=(V,w,m)$ be a graph, $x_0 \in V$ and $f:=d(x_0,\cdot)$. Let $\widetilde G$ be the associated birth-death chain with Laplacian $\widetilde \Delta$ and $\widetilde f:=d(0,\cdot)$. Let $\Phi :\IN_0 \to \IR$ be a function. Then,
	\begin{align*}
	\Delta f \leq \Phi(f) \qquad \mbox{ implies } \qquad 	\widetilde \Delta \widetilde f \leq \Phi(\widetilde f).
	\end{align*}
\end{theorem}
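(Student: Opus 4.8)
The plan is to make both Laplacians explicit using the decomposition of $V$ into the spheres $S_r = S_r(x_0)$ and then to pass from the pointwise hypothesis on $G$ to the pointwise conclusion on $\widetilde G$ by averaging over each sphere with weight $m$. The whole argument is a weighted sum of the hypothesis, so the engine is the matching between the sphere-to-sphere fluxes on $G$ and the edge weights of $\widetilde G$.

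First I would compute $\widetilde\Delta \widetilde f$. Since $\widetilde G$ is a birth-death chain and $\widetilde f(r)=r$, the only contributions at $r$ come from the neighbors $r\pm 1$, where $\widetilde f$ changes by $\pm 1$, giving
$$\widetilde\Delta \widetilde f(r) = \frac{\widetilde w(r,r+1)-\widetilde w(r,r-1)}{\widetilde m(r)} = \frac{w(S_r,S_{r+1})-w(S_{r-1},S_r)}{m(S_r)},$$
using the symmetry of $w$ together with Definition~\ref{def:AssociatedlineGraph} to identify $\widetilde w(r,r-1)=\widetilde w(r-1,r)=w(S_{r-1},S_r)$. On the graph side, every neighbor $z$ of a vertex $y\in S_r$ lies in $S_{r-1}$, $S_r$ or $S_{r+1}$ (since $d$ is a graph metric, adjacent vertices differ in distance to $x_0$ by at most one), and $f(z)-f(y)$ equals $-1$, $0$ or $+1$ respectively; hence
$$\Delta f(y) = \frac{1}{m(y)}\left(\sum_{z\in S_{r+1}} w(y,z) - \sum_{z\in S_{r-1}} w(y,z)\right).$$

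The key step is the averaging. The hypothesis $\Delta f\le \Phi(f)$ says that for each $y\in S_r$ one has $\sum_{z\in S_{r+1}}w(y,z)-\sum_{z\in S_{r-1}}w(y,z)\le \Phi(r)\,m(y)$. Summing this inequality over all $y\in S_r$ and observing that the two flux sums telescope into the sphere-to-sphere weights,
$$\sum_{y\in S_r}\sum_{z\in S_{r+1}} w(y,z) = w(S_r,S_{r+1}), \qquad \sum_{y\in S_r}\sum_{z\in S_{r-1}} w(y,z) = w(S_{r-1},S_r),$$
I obtain $w(S_r,S_{r+1})-w(S_{r-1},S_r)\le \Phi(r)\,m(S_r)$. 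Dividing by $\widetilde m(r)=m(S_r)>0$ and comparing with the formula for $\widetilde\Delta \widetilde f$ above yields exactly $\widetilde\Delta \widetilde f(r)\le \Phi(r)=\Phi(\widetilde f(r))$ for every $r\ge 1$. The base case $r=0$ is treated separately but is immediate: the inner sphere $S_{-1}$ is empty, so both $\Delta f(x_0)$ and $\widetilde\Delta \widetilde f(0)$ reduce to $\Deg(x_0)$, and the hypothesis at $x_0$ reads $\Deg(x_0)\le\Phi(0)$, which transfers directly.

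I do not expect a genuine obstacle here; once the sphere decomposition is set up, the proof is a one-line consequence of summing the pointwise estimate. The only points requiring care are bookkeeping: correctly splitting the neighbors of $y\in S_r$ according to which sphere they lie in, matching $\widetilde w(r,r-1)$ with $w(S_{r-1},S_r)$ through the symmetry of $w$, and handling the root $r=0$ where there is no inner sphere.
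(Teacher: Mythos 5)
Your proof is correct and follows essentially the same route as the paper: both arguments sum the pointwise inequality $m(y)\Delta f(y) \leq \Phi(r)m(y)$ over the sphere $S_r$, identify the resulting sphere-to-sphere fluxes with $\widetilde w(r,r\pm1)$, and divide by $\widetilde m(r) = m(S_r)$, with the root handled by the identity $\Delta f(x_0)=\Deg(x_0)=\widetilde\Delta\widetilde f(0)$.
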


\begin{proof}
We first note that $\Delta f(x_0) = \Deg(x_0)=\widetilde{\Deg}(0) = \widetilde{\Delta}\widetilde{f}(0)$.

Next, we let $r \in \IN$ and integrate $\Delta f \leq \Phi(f)$ over the sphere $S_r:=S_r(x_0)$.  For $x \in S_r$, we note that $m(x) \Delta f(x)= \sum_{y \in S_{r+1}} w(x,y) - \sum_{y \in S_{r-1}} w(x,y) \leq m(x) \Phi(r)$ so that
	\begin{align*}
	\Phi(r)\widetilde m(r) = \Phi(r) m(S_r) &= \sum_{x \in S_r} \Phi(r) m(x) \\&\geq \sum_{x \in S_r} \left( \sum_{y \in S_{r+1}} w(x,y) - \sum_{y \in S_{r-1}} w(x,y)  \right) \\
	&=\widetilde{w}(r,r+1) - \widetilde w(r,r-1).
	\end{align*}
	Hence,
	\begin{align*}
	\widetilde \Delta \widetilde f(r) = \frac{\widetilde{w}(r,r+1) - \widetilde w(r,r-1)}{\widetilde{m}(r)} \leq \Phi(r) = \Phi(\widetilde f(r))
	\end{align*}
	which finishes the proof.
\end{proof}

\eat{
We now prove that the Laplacian comparison is sharp on birth-death chains.
\begin{theorem}[Sharp Laplacian comparison and birth-death chains]\label{thm:LaplaceComparisonLine}
	Let $G=(\IN_0,w,m)$ be a birth-death chain and let $x_0=0 \in \IN_0$ be the root vertex. Let $f := d(x_0,\cdot)$. Then,
	\begin{align*}
		\Delta f =  \Phi(f)
	\end{align*}
	with 
	\begin{align*}
		\Phi(R) := \Deg(x_0) - \sum_{r=1}^{R} \kappa(r).
	\end{align*}
\end{theorem}

\begin{proof}
	Due to Theorem~\ref{thm:line}, we have for $r\geq 1$,
	\begin{align*}
	\kappa(r) = \kappa(r,r-1) = \Delta f(r-1) - \Delta f(r).
	\end{align*}
	Summing this up yields
	\begin{align*}
	\Delta f(R) =\Delta f(0) - \sum_{r=1}^R \Delta f(r) = \Phi(R) = \Phi(f(R)).
	\end{align*}
	for all $R\geq 0$ which finishes the proof.
\end{proof}
}

Combining this with the sharp Laplacian comparison for birth-death chains allows us to compare the curvature between a graph and its associated birth-death chain.
\begin{corollary}[Associated birth-death chain and curvature comparison]\label{cor:CurvCompare}
	Let $G=(V,w,m)$ be a graph, $x_0 \in V$ be a root vertex and $\kappa(r)$ be the sphere curvatures with respect to $x_0$. Let $\widetilde G= (\IN_0, \widetilde w, \widetilde m)$ be the associated birth-death chain with root vertex $\widetilde x_0 = 0$ and sphere curvatures $\widetilde \kappa(r) = \widetilde \kappa(r,r-1)$.
	Then,
	\begin{align*}
	\sum_{r=1}^R \widetilde \kappa(r) \geq 	\sum_{r=1}^R \kappa(r).
	\end{align*}
\end{corollary}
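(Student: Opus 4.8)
The plan is to chain together the two Laplacian comparison results, Theorem~\ref{thm:LaplaceComparisonNonConst} and Theorem~\ref{thm:AssociatedLaplaceComparisonLine}, and then to exploit the fact that the comparison is an \emph{equality}, not merely an inequality, on a birth-death chain. The key observation is that both $G$ and its associated chain $\widetilde G$ give rise to a profile function $\Phi$ of the same shape, namely $\Deg(x_0) - \sum_{r=1}^R \kappa(r)$, and comparing these two profiles directly yields the claimed inequality between the partial sums of sphere curvatures.

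Concretely, I would proceed as follows. First, apply Theorem~\ref{thm:LaplaceComparisonNonConst} to $G$ with $f := d(x_0,\cdot)$, obtaining $\Delta f \leq \Phi(f)$ where $\Phi(R) = \Deg(x_0) - \sum_{r=1}^R \kappa(r)$. Next, feed exactly this $\Phi$ into Theorem~\ref{thm:AssociatedLaplaceComparisonLine} to transport the estimate to the associated chain: with $\widetilde f := d(0,\cdot)$ this gives $\widetilde\Delta \widetilde f \leq \Phi(\widetilde f)$. On the other hand, since $\widetilde G$ is itself a birth-death chain, the sharpness clause of Theorem~\ref{thm:LaplaceComparisonNonConst} (applied with root $0$) gives the \emph{exact} identity $\widetilde\Delta \widetilde f(R) = \widetilde\Phi(R)$, where $\widetilde\Phi(R) = \widetilde{\Deg}(0) - \sum_{r=1}^R \widetilde\kappa(r)$. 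Combining the inequality with the identity at each radius $R$, and using $\widetilde f(R) = R$, yields $\widetilde\Phi(R) \leq \Phi(R)$, which after cancelling the common degree term and rearranging is precisely $\sum_{r=1}^R \widetilde\kappa(r) \geq \sum_{r=1}^R \kappa(r)$.

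The parts requiring care are the bookkeeping identifications that make the two profiles directly comparable, rather than any deep conceptual difficulty. I would first check that the degrees at the roots agree, $\widetilde{\Deg}(0) = \Deg(x_0)$; this follows from Definition~\ref{def:AssociatedlineGraph} since $S_0 = \{x_0\}$ forces $\widetilde m(0) = m(x_0)$ and $\widetilde w(0,1) = w(S_0,S_1) = \sum_y w(x_0,y)$. Second, I would confirm that on the chain the sphere curvature of Definition~\ref{def:sphereCurvature} collapses to $\widetilde\kappa(r) = \widetilde\kappa(r-1,r) = \widetilde\kappa(r,r-1)$, since every sphere $\widetilde S_r$ is the singleton $\{r\}$, so the inner max and outer min are both trivial and the symmetry of $\kappa$ applies. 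With these two identifications in hand there is no genuine obstacle: the argument is a direct comparison of the two profile functions at each radius. The only place one might slip is in aligning the summation indices of the telescoping sum defining $\Phi$, but this is routine.
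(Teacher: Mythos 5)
Your proposal is correct and follows exactly the paper's own argument: apply the Laplacian comparison (Theorem~\ref{thm:LaplaceComparisonNonConst}) to $G$, transport the bound to $\widetilde G$ via Theorem~\ref{thm:AssociatedLaplaceComparisonLine}, use the sharpness of the comparison on the birth-death chain to turn the transported bound into $\widetilde\Phi(R) \leq \Phi(R)$, and cancel $\widetilde{\Deg}(0) = \Deg(x_0)$. The bookkeeping checks you flag (root degrees agreeing, sphere curvature collapsing on the chain) are exactly the right ones and go through as you describe.
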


\begin{proof}
	Let $f:=d(x_0,\cdot)$ on $G$ and $\widetilde f := d(0,\cdot)$ on $\widetilde G$.
	Let 
	\begin{align*}
	\Phi(R) := \Deg(x_0) - \sum_{r=1}^{R} \kappa(r) \qquad \mbox{ and} \qquad
	\widetilde\Phi(R) := \widetilde \Deg(0) - \sum_{r=1}^{R} \widetilde \kappa(r)	.
	\end{align*}
	Due to Theorem~\ref{thm:LaplaceComparisonNonConst}, we have
	\begin{align*}
	\Delta f \leq \Phi(f) \qquad \mbox{ and } \qquad \widetilde \Delta \widetilde f = \widetilde \Phi(\widetilde f)
	\end{align*}
	Now, Theorems~\ref{thm:AssociatedLaplaceComparisonLine} yields
	\begin{align*}
	\widetilde \Phi(\widetilde f) = \widetilde \Delta \widetilde f \leq \Phi(\widetilde f)
	\end{align*}
	so that $\widetilde \Phi(R) \leq \Phi(R)$.  The fact that $\widetilde \Deg(0) = \Deg(x_0)$ completes the proof.
		
\end{proof}

One might be tempted to think that the sphere curvatures can also be compared without summation, i.e., $\widetilde \kappa(r) \geq \kappa(r)$ for all $r$. But this turns out to be wrong as demonstrated by the following example.
\begin{example}[Graph with $\widetilde\kappa(r) < \kappa(r)=0$]
Let $G=(\IZ,w,m)$ with root $x_0 = 0$ be given by
\begin{align*}
w(z,z+1) := m(z) := 2^z
\end{align*}	
and $w(m,n)=0$ if $|m-n| \neq 1$.
It is easy to see using the same techniques as in the proof of Theorem~\ref{thm:line} that $G$ has curvature $\kappa(r)= \kappa(r-1,r) = \Delta f(r-1) - \Delta f(r) = 0$ everywhere.

The associated birth-death chain $\widetilde{G} =(\IN_0, \widetilde w, \widetilde m)$ is then given by
\begin{align*}
\widetilde w(n,n+1) &= 2^n + 2^{-n-1} \mbox{ for } n\geq  0 \\
\widetilde m(n)&=2^n + 2^{-n} \quad \mbox{ for } n\geq  1  \qquad \mbox{ and } \qquad  \widetilde m(0)=1.
\end{align*}
Let $\widetilde f := d(0,\cdot)$ on $\widetilde G$. Thus, for $n\geq 1$,
$$
\widetilde \Delta \widetilde f(n) = \frac{\widetilde{w}(n,n+1) - \widetilde{w}(n,n-1)}{\widetilde{m}(n)} = \frac{2^{n-1} - 2^{-n-1}}{2^n + 2^{-n}}
$$
which is strictly increasing in $n$.
Hence for $r\geq 2$,
$$
\widetilde \kappa (r) = \widetilde \Delta \widetilde f (r-1) - \widetilde \Delta \widetilde f (r) < 0.
$$
\end{example}

\subsection{Stochastic completeness}

To prove stochastic completeness,
we will use the Khas'minskii criterion on graphs established by Huang in \cite[Theorem~3.3]{huang2011stochastic} which we restate now using our notation.

\begin{theorem}[Khas'minskii's criterion]\label{thm:Huang3.3}
Let $G=(V,w,m)$ be a graph. If there exists a non-negative function $f \in C(V)$
with
\begin{align*}
f(x) \to  \infty  \mbox{ as } \Deg(x) \to \infty
\end{align*}
satisfying
\begin{align*}
\Delta f  \leq \Psi(f)
\end{align*}
outside of a set of bounded vertex degree for some positive, increasing function $\Psi \in C^1([0,\infty))$ with 
$$
\int_0^\infty \frac{dr}{\Psi(r)} = \infty,
$$
then $G$ is stochastically complete.
\end{theorem}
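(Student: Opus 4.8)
The plan is to regard this statement as an adaptation of Huang's Khas'minskii criterion \cite[Theorem~3.3]{huang2011stochastic}, the only genuine novelty being that the exceptional region is permitted to be an infinite set of \emph{bounded} vertex degree rather than a finite set, with the exhaustion requirement correspondingly weakened from ``$f\to\infty$ at infinity'' to ``$f(x)\to\infty$ as $\Deg(x)\to\infty$''. The starting point is the standard characterization of stochastic incompleteness: $G$ fails to satisfy $P_t\mathbf 1=\mathbf 1$ for all $t>0$ if and only if there is a nontrivial $v\in\ell_\infty(V)$ with $0\le v\le 1$ solving $\Delta v=v$ (see, e.g., \cite{keller2012dirichlet,wojciechowski2008heat}). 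It therefore suffices to rule out such a $v$, and I would do so by producing a positive supersolution of $\Delta h=h$ that dominates every multiple $\eps v$ and tends to infinity along the high-degree directions.

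The supersolution is to be built from $f$ and $\Psi$. Setting $h:=\beta(f)$ for an increasing $\beta:[0,\infty)\to[0,\infty)$ and using the discrete chain-rule inequality $\Delta\beta(f)\le\beta'(f)\,\Delta f$, one gets $\Delta h\le\beta'(f)\Psi(f)$ on the set $V\setminus K$ where $\Delta f\le\Psi(f)$ and $K$ has bounded degree. The function $\beta$ should then integrate the scalar relation $\beta'\Psi=\lambda\beta$, so that $\Delta h\le\lambda h$ outside $K$; the divergence $\int_0^\infty dr/\Psi(r)=\infty$ is exactly what forces $\beta$ to be unbounded, whence $h\to\infty$ as $\Deg\to\infty$. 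Morally this is the non-explosion condition for the scalar ODE $y'=\Psi(y)$, which the walk inherits. The one delicate point here is that the chain-rule inequality demands $\beta$ be concave, which is not automatic from $\beta'\Psi=\lambda\beta$; this must be arranged by massaging $\Psi$ (as in \cite{huang2011stochastic}), and replacing $\Delta v=v$ by $\Delta v=\lambda v$ costs nothing since the characterization above holds for every $\lambda>0$.

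With $h$ in hand I would run a maximum principle on $w:=v-\eps h$ for fixed $\eps>0$. Since $\Delta v=\lambda v$ and $\Delta h\le\lambda h$ on $V\setminus K$, we have $\Delta w\ge\lambda w$ there, while $v$ bounded and $h\to\infty$ as $\Deg\to\infty$ force the superlevel sets $\{w\ge c\}$, $c>0$, into a set of bounded vertex degree. If the supremum of $w$ were positive and attained at some $x^*\notin K$, then $\Delta w(x^*)\le 0$ together with $\Delta w(x^*)\ge\lambda w(x^*)>0$ is a contradiction; hence $v\le\eps h$ away from $K$, and letting $\eps\to 0^+$ would give $v\le 0$, i.e. $v\equiv0$, proving stochastic completeness.

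The main obstacle is precisely that $K$ is only assumed to have bounded vertex degree, not to be finite, so the positive supremum of $w$ need neither be attained nor avoid $K$, and on $K$ the estimate $\Delta h\le\lambda h$ is unavailable. I expect to resolve this by exploiting that a subgraph of bounded degree is itself non-explosive: on $K$ the Laplacian acts as a bounded operator, so the walk moves at bounded speed and cannot reach infinity through $K$ in finite time, meaning any escape responsible for a failure of $v\le\eps h$ must pass through the high-degree region where the supersolution bound holds. Concretely, I would exhaust $K$ by finite sets, apply the comparison on the complement, and pass to the limit to recover $v\le\eps h$ globally, following the Dirichlet-exhaustion argument of \cite{huang2011stochastic}; this limiting step on the non-compact bounded-degree region is where the proof requires the most care and where invoking Huang's theorem for the technical core is most natural.
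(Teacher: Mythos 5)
First, a point of orientation: the paper does not prove this statement. It is Huang's Khas'minskii criterion, restated in the paper's notation with an explicit citation to \cite[Theorem~3.3]{huang2011stochastic}, so there is no internal argument to compare yours against, and your closing fallback of ``invoking Huang's theorem for the technical core'' would simply reproduce what the paper already does by citation. Judged as a standalone proof, your architecture is the right one (a bounded nonnegative nontrivial solution of $\Delta v=\lambda v$ witnessing incompleteness, a supersolution obtained by composing $f$ with a scalar function built from $\Psi$, and a comparison $v\leq\eps h$), and the concavity worry is easily dispatched: do not solve $\beta'\Psi=\lambda\beta$, but take $h:=G(f)$ with $G(r)=\int_0^r ds/\Psi(s)$, which is concave because $\Psi$ is increasing, unbounded because the integral diverges, and satisfies $\Delta h\leq G'(f)\Delta f\leq 1\leq\lambda h$ outside a set of bounded vertex degree (after enlarging the exceptional set by the bounded-degree sublevel set $\{h\leq 1/\lambda\}$).

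The genuine gap is your treatment of the exceptional set $K$ and of the possibly unattained supremum of $w=v-\eps h$. The heuristic that ``the walk moves at bounded speed through $K$'' and the plan to ``exhaust $K$ by finite sets and pass to the limit'' do not yield an argument: the inequality $\Delta h\leq\lambda h$ fails on all of $K$, not merely on an unexhausted remainder, and nothing in the sketch converts bounded speed on $K$ into the pointwise comparison you need at a near-maximum of $w$. The mechanism that actually closes the proof is elementary and purely analytic: for any $x$ one has $\Delta v(x)\leq\Deg(x)\,(\sup v - v(x))$, so a bounded solution of $\Delta v=\lambda v$ satisfies $v(x)\leq\Deg(x)\sup v/\lambda$. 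Choosing $\lambda$ larger than twice the degree bound on the (enlarged) exceptional set forces $v\leq\tfrac12\sup v$ there, so for small $\eps$ the near-supremum set of $w$ automatically avoids $K$; the same pointwise inequality, applied to $w$ on its near-supremum set (which lies in a sublevel set of $h$ and hence has bounded degree $D_\eps$), gives $\Delta w\leq D_\eps\delta$ wherever $w>\sup w-\delta$, which collides with $\Delta w\geq\lambda w>\lambda\sup v/2>0$ there as $\delta\to 0$. Both halves of the argument thus rest on the degree-versus-value estimate together with the freedom to take $\lambda$ large relative to the degree bound on $K$ --- a freedom your sketch mentions only to say that changing $\lambda$ ``costs nothing,'' without using it. Without these two ingredients the comparison does not close, because the supremum of $w$ need not be attained and could a priori be approached inside $K$.
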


Combining the Laplacian comparison with the Khas'minskii's criterion using $f = d(x_0, \cdot)$ yields an optimal stochastic completeness result.

\begin{theorem}[Stochastic completeness]\label{thm:StochComplete} \ \\
	\begin{enumerate}[(i)]
		\item If $G=(V,w,m)$ is a graph with 
		$$\kappa(r) \geq -C \log r$$ for some constant $C>0$ and large $r$, then
	 $G$ is stochastically complete. 
	\item  For $\eps>0$, let $G_\eps=(\IN_0, w,m)$ be a birth-death chain with $m\equiv1$ and 
	$$w(R,R+1) = 1 + \sum_{r=1}^R\sum_{k=1}^r  \left(\log k\right)^{1+\eps}.$$
	Then $G_\eps$ is stochastically incomplete and satisfies 
	$$\kappa(r) \geq - (\log r)^{1+\eps}$$ 
	for all $r\geq 2$. 
	\end{enumerate}
\end{theorem}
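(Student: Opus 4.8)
The plan is to handle the two parts by different engines: for (i) I would combine the decaying-curvature Laplacian comparison (Theorem~\ref{thm:LaplaceComparisonNonConst}) with Khas'minskii's criterion (Theorem~\ref{thm:Huang3.3}), while for (ii) I would compute the curvature from the birth-death formula and then prove incompleteness by a direct analysis of the chain. For part (i), fix a root $x_0$ and set $f := d(x_0,\cdot)$. Theorem~\ref{thm:LaplaceComparisonNonConst} gives $\Delta f \le \Phi(f)$ with $\Phi(R) = \Deg(x_0) - \sum_{r=1}^{R}\kappa(r)$. Absorbing the finitely many small-radius terms into a constant, the hypothesis $\kappa(r) \ge -C\log r$ yields
\[ \Phi(R) \le c_0 + C\sum_{r=1}^{R}\log r \le c_0 + C\,R\log R. \]
I would then pick a positive, increasing $\Psi \in C^1([0,\infty))$ comparable to $r\log r$ (for example $\Psi(r) = c_1 + c_2\,(r+e)\log(r+e)$ with $c_2 > C$), so that $\Phi(R) \le \Psi(R)$ for all large $R$.

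It then remains to verify the hypotheses of Khas'minskii's criterion. The condition $f(x) \to \infty$ as $\Deg(x)\to\infty$ is where local finiteness enters: each ball $B_R(x_0)$ is finite and therefore carries bounded vertex degree, so vertices of arbitrarily large degree must lie at arbitrarily large distance from $x_0$. The inequality $\Delta f \le \Phi(f) \le \Psi(f)$ then holds outside the finite (hence bounded-degree) ball on which $\Phi \le \Psi$ could fail, and the decisive point is that $\int^{\infty} dr/\Psi(r) = \infty$ because $\int^{\infty} dr/(r\log r)$ diverges. Khas'minskii's criterion then yields stochastic completeness. I would stress that $-\log r$ sits exactly at the threshold of this divergence, which is why the constant $C$ plays no role and the rate is optimal.

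For part (ii), the curvature is a short computation. Since $m\equiv 1$ one has $\widetilde f(r)=d(0,r)=r$ and $\Delta\widetilde f(n)=w(n,n+1)-w(n,n-1)$, so, writing $W_n:=w(n,n+1)$, Theorem~\ref{thm:line} together with Remark~\ref{rem:line} gives for $r\ge 2$
\[ \kappa(r)=\kappa(r-1,r)=\Delta\widetilde f(r-1)-\Delta\widetilde f(r)=-\bigl(W_r-2W_{r-1}+W_{r-2}\bigr). \]
As $W_n-W_{n-1}=\sum_{k=1}^{n}(\log k)^{1+\eps}$, the second difference telescopes to $(\log r)^{1+\eps}$, so in fact $\kappa(r)=-(\log r)^{1+\eps}$ for $r\ge 2$, which is the asserted bound with equality. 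For incompleteness I would invoke the birth-death characterization: with $m\equiv1$ the chain is stochastically incomplete if and only if $\sum_{n}(n+1)/w(n,n+1)<\infty$, which one obtains by solving $\Delta v=v$ explicitly — the increasing solution with $v(0)=1$ satisfies $w_n\bigl(v(n+1)-v(n)\bigr)=\sum_{k=0}^{n}v(k)$ and stays bounded exactly when this sum converges, and a bounded positive solution is equivalent to incompleteness. Estimating $W_n\sim\tfrac12 n^2(\log n)^{1+\eps}$ (using $\sum_{k=1}^{s}(\log k)^{1+\eps}\sim s(\log s)^{1+\eps}$) gives $(n+1)/W_n$ comparable to $1/\bigl(n(\log n)^{1+\eps}\bigr)$, whose sum converges precisely because $1+\eps>1$; hence $G_\eps$ is incomplete.

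I expect the main obstacle to lie in part (ii). The curvature identity and the asymptotics of $W_n$ are routine, but Khas'minskii only supplies the completeness direction, so proving incompleteness requires the sharp if-and-only-if criterion for birth-death chains. Its justification via the explicit bounded solution of $\Delta v=v$, together with the borderline double-sum estimate at the $(\log)^{1+\eps}$ scale, are the genuinely delicate steps, and they are what pin down $\eps>0$ as the exact transition between completeness and incompleteness.
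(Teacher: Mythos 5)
Your proposal is correct and follows essentially the same route as the paper: part (i) via the decaying-curvature Laplacian comparison (Theorem~\ref{thm:LaplaceComparisonNonConst}) fed into Khas'minskii's criterion with $\Psi(r)\asymp r\log r$, and part (ii) via the birth-death curvature formula (Theorem~\ref{thm:line}) together with the summability criterion $\sum_n (n+1)/w(n,n+1)<\infty$ for incompleteness. The only difference is cosmetic: the paper simply cites the weakly spherically symmetric criterion of Keller--Lenz--Wojciechowski for that last step, whereas you sketch a self-contained derivation of it through the bounded increasing solution of $\Delta v=v$, which is a valid (and standard) justification of the same fact.
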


\begin{rem}
We note that the second statement shows that the first statement is optimal in the sense  that the decay rate $-\log r$ cannot be replaced by the faster decay rate $-( \log r )^{1+\eps}$.
\end{rem}

\begin{proof}
For the proof of $(i)$, let $f := d(x_0,\cdot)$.
Using the Laplacian comparison, Theorem~\ref{thm:LaplaceComparisonNonConst}, we have
$$
\Delta f  \leq \Phi(f)
$$
with 
$$\Phi(R) =  \Deg(x_0) -  \sum_{r=1}^{R} \kappa(r)  \leq \Psi(R)\in O(R \log (R))$$
since $-\kappa(r) \in O(\log(R))$, where $\Psi \in C^1([0,\infty))$ is some positive increasing function to which we can apply the Khas'minskii's criterion.
In particular,
$$
\int_0^\infty  \frac {dr}{\Psi(r)} = \infty,
$$	
so that Theorem~\ref{thm:Huang3.3} yields stochastic completeness as desired.

To prove $(ii)$, we let $f :=d(0,\cdot)$.
We first observe that for $R\geq1$,
\begin{align*}
\Delta f(R) = w(R,R+1) - w(R,R-1) = \sum_{k=1}^{R} (\log k)^{1+\eps}.
\end{align*}
Since $G_\eps$ is a birth-death chain, Theorem~\ref{thm:line} yields 
$$\kappa(R) = \kappa(R-1,R) = \Delta f(R-1) - \Delta f(R) = -(\log R)^{1+\eps}$$
for $R \geq 2$ as desired.

Since $\iint  (\log x)^{1+\eps} \in \Theta(x^2 (\log x)^{1+\eps})$, by definition of $w$, we have
\begin{align}
w(R,R+1) \in \Theta(R^2 (\log R)^{1+\eps}).  \label{eq:wThetaR}
\end{align}

Observe that as $G$ is a birth-death chain, it is weakly spherically symmetric with respect to $x_0 = 0 \in V= \IN_0$ in the sense of \cite[Definition~2.3]{keller2013volume}.
Hence, due to \cite[Theorem~5]{keller2013volume}, we know that $G$ is stochastically complete if and only if
$$
\sum_r \frac{r+1}{w(r,r+1)} = \infty.
$$
Due to (\ref{eq:wThetaR}), we have
$$
\frac {r+1}{w(r,r+1)} \in \Theta\left( \frac 1 {r (\log r)^{1+\eps}}\right)
$$
and since
$$\sum_r  \frac 1 {r (\log r)^{1+\eps}} < \infty$$
we have
$$
\sum_r \frac{r+1}{w(r,r+1)} < \infty
$$
which implies stochastic incompleteness.
\end{proof}

As mentioned in the introduction, the optimal curvature decay rate on Riemannian manifolds is of the order $-r^2$. 
As the use of intrinsic metrics has resolved various discrepancies between the manifold and graph settings in the past, 
one might think that using an intrinsic metric $\sigma$ instead of the combinatorial graph metric might give stochastic completeness when assuming $\kappa(r) \geq -C \sigma(0,r)^2$ in line with the manifolds case. This turns out to be wrong as we give an example of a stochastically incomplete graph with $\kappa(r)  \sim -(\log \sigma(0,r))^{1+\eps} $ for an intrinsic metric $\sigma$ where $f(n) \sim g(n)$ means $cf(n)<g(n)<Cf(n)$ for all $n \in \IN$ and some $C>c>0$.

We recall that a metric $\sigma$ on $V$ is called \emph{intrinsic} if 
\[ \Delta \sigma(x,\cdot)^2(x)=\frac 1 {m(x)}\sum_{y \in V} w(x,y)\sigma(x,y)^2 \leq 2 \]
for all $x \in V$.  For various uses the intrinsic metrics in the graph setting, see \cite{keller2015intrinsic}.

\begin{example}\label{ex:incompleteIntrinsic}
Let $G=(\IN_0,w,m)$  be a birth-death chain with 
$m(r)=2^r$  and  $w(r-1,r)=(\log r)^{1+\eps} \cdot r \cdot  2^r$ for $\eps>0$.
By Theorem~\ref{thm:line}, we obtain that $\kappa(r) \sim -(\log r)^{1+\eps}$.
Moreover, one can check that 
\[ \sigma(r,R) :=  \sum_{k=r}^{R-1} \Deg_+(k)^{-1/2} \]
gives an intrinsic metric where $\Deg_+(r) := w(r,r+1)/m(r) \sim r(\log r)^{1+\eps}$. In particular,
$\sigma(0,r) \sim \sqrt{r/(\log r)^{1+\eps}}$ and, thus,
$\kappa(r) \sim -(\log \sigma(0,r))^{1+\eps}$.

An objection to the example above is that the definition of the spherical curvature $\kappa$ depends on the combinatorial graph distance function $d$. 
However, in analogy to Theorem~\ref{thm:nablaDelta}, we can also define a curvature $\kappa_\sigma$ with respect to the intrinsic metric $\sigma$ via 
\[
\kappa_\sigma(x,y)= \inf\left\{\nabla_{xy}^\sigma \Delta f : \nabla_{yx}^\sigma f=1, \; \|\nabla^\sigma f\|_\infty = 1 \right\}
\]
where $\nabla_{xy}^\sigma f := \frac{f(x)-f(y)}{\sigma(x,y)}$.
On birth-death chains and intrinsic path metrics $\sigma$, for $x<y$ this simplifies to
\[
\kappa(x,y)= \nabla_{xy}^\sigma\Delta \sigma(0,\cdot).
\]
In our example, we have
\[
\Delta \sigma(0,\cdot)(r) \sim \sqrt{\Deg_+(r)} \sim \sqrt{r \cdot (\log r)^{1+\eps}}
\]
and, by using the mean value theorem to estimate the difference,
\begin{align*}
\kappa_\sigma(r,r+1) &=\sqrt{\Deg_+(r)} \cdot \left(\Delta \sigma(0,\cdot)(r)- \Delta \sigma(0,\cdot)(r+1) \right) \\
&\sim - \sqrt{\Deg_+(r)} \cdot \sqrt{\frac {(\log  r)^{1+\eps}}r} \\
&\sim - (\log r)^{1+\eps}.
\end{align*}
In particular, we also have $\kappa_\sigma(r) \sim -(\log \sigma(0,r))^{1+\eps}$. 
 
 We are left to show stochastic incompleteness.
 Due to \cite[Theorem~5]{keller2013volume}, $G$ is stochastically complete if and only if
\[
\sum_r \frac{m(\{1,...,r\})}{w(r,r+1)} = \infty.
\] 
However,
\[
\frac{m(\{1,...,r\})}{w(r,r+1)} \sim \frac 1{r (\log r)^{1+\eps}}
\]
which is summable. Therefore, $G$ is stochastically incomplete.
\end{example}

\subsection{Improved diameter bounds}

We prove that a graph with bounded degree and sphere curvatures decaying not faster than $1/R$ must be finite (Corollary~\ref{cor:finite}).  We also show that this decay rate is optimal (Theorem~\ref{thm:FiniteOptimal}).  For various diameter bounds on finite graphs see \cite{paeng2012volume}.

On the other hand, we show that in the case of unbounded degree, even a uniform positive lower curvature bound does not imply finiteness (see Example~\ref{Ex:positiveCurvInfiniteDiam}).
In contrast, if we assume that the measure is bounded from below, then a uniform positive lower curvature bound implies finiteness even in the case of unbounded degree (see Corollary~\ref{cor:FiniteDiamBoundedMeasure}).

As a warm-up, we start with the following diameter bound from \cite[Theorem~4.1]{lin2011ricci} transferred to our setting.
\begin{proposition}\label{prop:FiniteDiamBoundedDegree}
Let $G=(V,w,m)$ be a graph and let $x, y \in V$ with $x \not = y$.  If $\kappa(x,y)>0$, then
$$
d(x,y) \leq \frac{\Deg(x) + \Deg(y)} {\kappa(x,y)}.
$$
\end{proposition}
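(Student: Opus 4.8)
The plan is to apply the limit-free curvature formula of Theorem~\ref{thm:nablaDelta}, which yields $\kappa(x,y) \leq \nabla_{xy}\Delta f$ for every $f \in Lip(1) \cap C_c(V)$ with $\nabla_{yx} f = 1$, to a single cleverly chosen test function, namely (a compactly supported truncation of) the distance function $d(x,\cdot)$. Since only the values of $f$ on $B_1(x) \cup B_1(y)$ enter $\Delta f(x)$ and $\Delta f(y)$, I would first replace $d(x,\cdot)$ by a truncation $\widetilde f$ that agrees with it on $B_1(x) \cup B_1(y)$, exactly as in the proof of Theorem~\ref{thm:nablaDelta}, so that $\widetilde f \in Lip(1) \cap C_c(V)$ and $\nabla_{yx}\widetilde f = 1$; this substitution leaves the relevant Laplacian values unchanged.

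Next I would compute the two Laplacian values appearing in $\nabla_{xy}\Delta f$. At $x$, every neighbour $z$ of $x$ satisfies $d(x,z) = 1$, so $f(z) - f(x) = 1$ and hence $\Delta f(x) = \Deg(x)$. At $y$, the triangle inequality gives $f(z) - f(y) = d(x,z) - d(x,y) \geq -1$ for every $z \sim y$, whence each summand $w(y,z)(f(z)-f(y)) \geq -w(y,z)$ and therefore $\Delta f(y) \geq -\Deg(y)$.

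Combining these through the curvature inequality then yields
\[
\kappa(x,y) \leq \nabla_{xy}\Delta f = \frac{\Delta f(x) - \Delta f(y)}{d(x,y)} \leq \frac{\Deg(x) + \Deg(y)}{d(x,y)},
\]
and dividing by the positive quantity $\kappa(x,y)$ produces the claimed diameter estimate.

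There is no serious obstacle once Theorem~\ref{thm:nablaDelta} is in hand; the only point requiring care is the passage to a compactly supported test function, which is handled by the same cutoff $\phi(z) = [r \wedge (2r - d(x,z))]_+$ with $r = d(x,y) + 1$ employed there, combined with the observation that the computation of $\Delta f$ at $x$ and $y$ is local. I would also emphasize that the argument uses the hypothesis $\kappa(x,y) > 0$ only for the single pair $(x,y)$, in contrast to the Laplacian comparison Theorem~\ref{thm:LaplaceCompare}, which is precisely why the bound cannot merely be quoted from that result.
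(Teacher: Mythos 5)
Your argument is correct, but it follows a genuinely different route from the paper. The paper proves this proposition directly from the definition of $\kappa_\eps$ via the Wasserstein triangle inequality: it computes $W(1_x,m_x^\eps)=\eps\Deg(x)$ and $W(1_x,1_y)=d(x,y)$, bounds $W(m_x^\eps,m_y^\eps)\geq d(x,y)-\eps(\Deg(x)+\Deg(y))$, and then divides by $\eps$ and lets $\eps\to 0^+$; this is the transport-theoretic argument inherited from Lin--Lu--Yau and never invokes Theorem~\ref{thm:nablaDelta}. You instead plug the (truncated) distance function $d(x,\cdot)$ into the limit-free formula, observing $\Delta f(x)=\Deg(x)$ and $\Delta f(y)\geq-\Deg(y)$. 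Your computations check out: with $f=d(x,\cdot)$ one has $\nabla_{yx}f=1$, the cutoff $\widetilde f = f\wedge\phi$ is $1$-Lipschitz, compactly supported by local finiteness, and agrees with $f$ on $B_1(x)\cup B_1(y)$, which is all that enters $\Delta f(x)$, $\Delta f(y)$ and $\nabla_{yx}f$. What your approach buys is transparency of the relation to the Laplacian comparison principle: your proof is essentially the proof of Theorem~\ref{thm:LaplaceCompare} with the extra observation $\Delta d(x,\cdot)(y)\geq -\Deg(y)$, and, as you note, it isolates that only the single curvature value $\kappa(x,y)$ is needed. What the paper's route buys is independence from Theorem~\ref{thm:nablaDelta} (it works straight from the definition of $\kappa_\eps$) and a cleaner probabilistic interpretation in terms of transport costs between the lazy random walk measures.
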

\begin{proof}
It is easy to see that $W(1_x,1_y) = d(x,y)$.  Furthermore,
observe that for sufficiently small $\eps$,
$$
W(1_x,m^\eps_x) = \eps \Deg(x).
$$
This follows as $W(1_x,m_x^\eps) = \sup_{f \in Lip(1)} - \eps \Delta f(x) \leq \eps \Deg(x)$ with equality for $f=1_x$.
Hence, by the triangle inequality,
\begin{align*}
W(m_x^\eps, m_y^\eps) &\geq W(1_x,1_y) - W(1_x,m^\eps_x) -W(1_y,m^\eps_y) \\
&= d(x,y) - \eps(\Deg(x) + \Deg(y)).
\end{align*}

Thus,
\begin{align*}
\kappa_\eps(x,y)   
&= 1 -  \frac{W(m_x^\eps, m_y^\eps)}{d(x,y)} \\
&\leq \eps \cdot  \frac{ \Deg(x) + \Deg(y)} {d(x,y)}.
\end{align*}
This yields the claim since $\kappa(x,y) = \lim_{\eps\to 0^+} \frac{1} {\eps}\kappa_\eps(x,y)$.
\end{proof}

In particular, if the degree is bounded and the curvature is uniformly positive, then the graph is finite.
More specifically, if we let $\diam(G)= \sup_{x,y \in V} d(x,y)$ denote the diameter of $G$, then if $\Deg(x)\leq M$ and $Ric(G)\geq K>0$, then 
$$\diam(G) \leq \frac{2M}{K}.$$
We now improve this result in the sense that we only lower bound the sphere curvatures, which allows for some negative curvature, and consider part of the vertex degrees.
For a fixed vertex $x_0 \in V$, we let for $x \in S_r:= S_r(x_0)$, 
\[ \Deg_{\pm}(x) = \frac{1}{m(x)} \sum_{y \in S_{r\pm1}} w(x,y) \]
denote the \emph{outer} and \emph{inner} degree of $x$.
Using the Laplacian comparison principle for non-constant curvature, we immediately obtain the following improved diameter bound.

 \begin{theorem}[Improved diameter bound]\label{thm:ImprovedDiamBound}
	Let $G=(V,w,m)$ be a graph with $x_0 \in V$. 
	If $S_R \not = \emptyset$ for $R>0$, then
	\begin{align*}
	\sum^R_{r=1} \kappa(r) \leq \Deg(x_0) + \min_{x \in S_R} \left(\Deg_-(x) - \Deg_+(x) \right).
	\end{align*}
In particular, if $\min_{x \in S_r} \left(\Deg_-(x) - \Deg_+(x) \right) \leq M$ and $\kappa(r) \geq K >0$ for all $r\geq1$, then
\[ \diam(G) \leq \frac{2(\Deg(x_0) +M)}{K}. \]
\end{theorem}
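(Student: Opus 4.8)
The plan is to apply the Laplacian comparison principle for decaying curvature (Theorem~\ref{thm:LaplaceComparisonNonConst}) to the distance function $f:=d(x_0,\cdot)$ and then to identify the value of $\Delta f$ on the sphere $S_R$ explicitly. First I would recall that Theorem~\ref{thm:LaplaceComparisonNonConst} gives $\Delta f \leq \Phi(f)$ with $\Phi(R)=\Deg(x_0)-\sum_{r=1}^{R}\kappa(r)$, so that every $x \in S_R$ satisfies $\Delta f(x) \leq \Deg(x_0)-\sum_{r=1}^{R}\kappa(r)$. The whole point is thus to compute the left-hand side.

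The key step is the observation that for $x \in S_R$, any neighbor $y \sim x$ lies in $S_{R-1}\cup S_R \cup S_{R+1}$, since $|d(x_0,y)-d(x_0,x)|\leq 1$ across an edge. Hence $f(y)-f(x)$ equals $-1$ for $y \in S_{R-1}$, equals $0$ for $y \in S_R$, and equals $+1$ for $y \in S_{R+1}$, so that the $S_R$-terms drop out and
\begin{align*}
\Delta f(x) = \frac{1}{m(x)}\sum_{y\in S_{R+1}} w(x,y) - \frac{1}{m(x)}\sum_{y\in S_{R-1}} w(x,y) = \Deg_+(x)-\Deg_-(x).
\end{align*}
Combining this with the comparison inequality and rearranging yields $\sum_{r=1}^{R}\kappa(r) \leq \Deg(x_0)+(\Deg_-(x)-\Deg_+(x))$ for every $x \in S_R$; minimizing over $x \in S_R$ gives the first displayed claim.

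For the ``in particular'' statement I would feed the uniform bound $\kappa(r)\geq K>0$ into the inequality just proved: whenever $S_R \neq \emptyset$ we get $RK \leq \sum_{r=1}^{R}\kappa(r) \leq \Deg(x_0)+M$, so $R \leq (\Deg(x_0)+M)/K$. Thus no sphere $S_R$ is nonempty once $R$ exceeds $(\Deg(x_0)+M)/K$, i.e.\ the eccentricity of $x_0$ is bounded by that quantity, and the triangle inequality $d(x,y)\leq d(x,x_0)+d(x_0,y)$ doubles this to give $\diam(G)\leq 2(\Deg(x_0)+M)/K$.

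I do not expect any serious obstacle here, as the argument is essentially a one-line consequence of Theorem~\ref{thm:LaplaceComparisonNonConst} once the geometry is set up. The only point requiring a little care is the sphere computation of $\Delta f$: one must justify that neighbors of a point in $S_R$ cannot jump more than one sphere and that the $S_R$-to-$S_R$ edges contribute nothing, so that the expression collapses cleanly to $\Deg_+(x)-\Deg_-(x)$. Everything else is bookkeeping with the sum $\sum_{r=1}^R \kappa(r)$ and the triangle inequality.
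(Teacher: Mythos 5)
Your proposal is correct and follows essentially the same route as the paper: apply Theorem~\ref{thm:LaplaceComparisonNonConst} to $f=d(x_0,\cdot)$, observe that $\Delta f(x)=\Deg_+(x)-\Deg_-(x)$ on $S_R$, rearrange, and use the triangle inequality for the diameter bound. The sphere computation you flag as the only delicate point is exactly the ``easy calculation'' the paper invokes, and your justification of it is sound.
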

\begin{proof}
We recall that the Laplacian comparison, Theorem~\ref{thm:LaplaceComparisonNonConst}, gives that
\[ \Delta f(x) \leq \Deg(x_0) - \sum_{r=1}^R \kappa(r) \]
for $x \in S_R$ where $f(x) = d(x,x_0)$.  Now, the first statement follows as $\Delta f(x) = \Deg_+(x) - \Deg_-(x)$ by an easy calculation.
The second statement is an immediate consequence of the first statement and the triangle inequality.

\eat{
	We prove that there is no $x \in B_{R} = B_R(x_0)$ which will prove the result by the triangle inequality. Let $f:=d(x_0,\cdot)$.
	Suppose there exists an $x \in B_{R}$. Then, due to the Laplacian comparison principle for non-constant curvature (Theorem~\ref{thm:LaplaceComparisonNonConst}), we have
	\begin{align*}
	-\Deg(x) \leq \Delta f(x) \leq \Deg(x_0) - \sum_{r=1}^{R} \kappa(r) &< \Deg(x_0) -\left(\Deg(x_0) + \max_{y \in S_R} \Deg(y) \right) \\&\leq -\Deg(x).
	\end{align*}
	This is a contradiction which finishes the proof.}
\end{proof}

The theorem immediately gives us the following corollary.
\begin{corollary}\label{cor:finite}
	If $G=(V,w,m)$ is a graph with bounded degree, then 
	$$\limsup_{R \to \infty} \sum_r^R \kappa(r) < \infty.$$ 
Consequently, there is no infinite graph with bounded vertex degree satisfying 
$$\limsup_{R \to \infty} \sum_r^R \kappa(r)=\infty.$$
\end{corollary}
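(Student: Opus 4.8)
The plan is to read off both statements as immediate consequences of the improved diameter bound in Theorem~\ref{thm:ImprovedDiamBound}, so there is essentially no new argument to construct; the entire content is to bound the right-hand side of that theorem uniformly in $R$ using the bounded-degree hypothesis. Set $D := \Deg_{\max} < \infty$, which is finite precisely because $G$ has bounded degree.

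First I would fix a root $x_0 \in V$ and observe that for every $R>0$ with $S_R \neq \emptyset$, Theorem~\ref{thm:ImprovedDiamBound} gives
$$\sum_{r=1}^R \kappa(r) \leq \Deg(x_0) + \min_{x \in S_R}\left(\Deg_-(x) - \Deg_+(x)\right).$$
Now $\Deg(x_0) \leq D$, and for any single $x \in S_R$ one has $\Deg_-(x) - \Deg_+(x) \leq \Deg_-(x) \leq \Deg(x) \leq D$, since $\Deg_+(x) \geq 0$ and $\Deg_-(x)$ is a partial sum of the nonnegative terms defining $\Deg(x)$. Hence the minimum is also at most $D$, and the partial sum is bounded by $2D$ for every $R$ with $S_R \neq \emptyset$.

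The one point deserving a moment's care is the case where a sphere is empty. I would recall that in a locally finite graph every ball $B_R(x_0)$ is finite, so if $G$ is infinite and connected then $S_R \neq \emptyset$ for all $R \geq 0$; the uniform estimate $\sum_{r=1}^R \kappa(r) \leq 2D$ then holds for all $R$, and taking $\limsup_{R \to \infty}$ yields the first claim. If instead $G$ is finite, the spheres $S_R$ are nonempty only up to the eccentricity of $x_0$, so the partial sums stabilize and the limsup is trivially finite. Either way $\limsup_{R\to\infty}\sum_{r=1}^R \kappa(r) < \infty$.

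The second statement is then merely the contrapositive of the first: any infinite graph of bounded vertex degree has finite limsup by the argument above, so none can satisfy $\limsup_{R\to\infty}\sum_{r=1}^R \kappa(r) = \infty$. The only genuine input is Theorem~\ref{thm:ImprovedDiamBound}; I do not expect any serious obstacle, as everything else reduces to the elementary inequalities $\Deg_\pm(x) \leq \Deg(x) \leq D$ and the standard fact that balls in a locally finite graph are finite.
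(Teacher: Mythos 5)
Your proposal is correct and follows exactly the paper's route: the paper derives the corollary immediately from Theorem~\ref{thm:ImprovedDiamBound}, and your explicit bounds $\Deg(x_0)\leq \Deg_{\max}$ and $\Deg_-(x)-\Deg_+(x)\leq \Deg(x)\leq \Deg_{\max}$ (together with the observation that spheres of an infinite connected locally finite graph are nonempty) are precisely the elementary estimates being left implicit there.
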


We show that the results above are optimal in the sense that whenever we have a given summable positive sequence $k_r$, we can find an infinite graph with bounded degree and 
summable sphere curvatures $\kappa(r)$ larger than $k_r$.

\begin{theorem}\label{thm:FiniteOptimal}
For every positive sequence $(k_r)_{r \in \IN}$ such that
$ \sum_r k_r < \infty$
there exists an infinite graph $G=(V,w,m)$ with bounded degree such that
$$ \kappa(r) \geq k_r \qquad \mbox{ and } \qquad \sum_r \kappa(r) <\infty.$$
\end{theorem}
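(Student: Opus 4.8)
The plan is to realize $G$ as a birth-death chain, since Theorem~\ref{thm:line} provides a fully explicit formula for the sphere curvatures in that case. Writing $f(r):=d(0,r)=r$ and recalling from Theorem~\ref{thm:line} together with Remark~\ref{rem:line} that on a birth-death chain the sphere curvature is $\kappa(r)=\kappa(r-1,r)=\Delta f(r-1)-\Delta f(r)$, I would prescribe the radial Laplacian $\Delta f$ directly rather than the weights. Setting $s_r:=\sum_{j>r}k_j$, which is finite and decreases to $0$ because $\sum_r k_r<\infty$, the telescoping identity $s_{r-1}-s_r=k_r$ shows that a chain with $\Delta f(r)=s_r$ for all $r\geq 0$ would automatically satisfy $\kappa(r)=k_r$. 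This gives both $\kappa(r)\geq k_r$ and $\sum_r\kappa(r)=\sum_r k_r<\infty$ at once, so the entire task reduces to constructing such a chain with bounded degree.

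To build it, note that at each vertex I have two free parameters, the measure $m(r)$ and the edge weight $w(r,r+1)$, linked only by the requirement $\Delta f(r)=(w(r,r+1)-w(r,r-1))/m(r)=s_r$ (with the convention $w(0,-1):=0$). This is a first-order recursion $w(r,r+1)=w(r-1,r)+m(r)s_r$, which determines the weights once $m$ is fixed and keeps them positive since $k_r>0$; explicitly $w(r,r+1)=m(0)s_0+\sum_{j=1}^{r}m(j)s_j$. In particular $\Deg(0)=w(0,1)/m(0)=s_0$, which is already bounded, and the remaining freedom in the choice of $m$ is exactly what I would exploit to control the degrees at the other vertices.

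The only real obstacle is keeping $\Deg_{\max}<\infty$: prescribing the curvature fixes the \emph{difference} $w(r,r+1)-w(r,r-1)$, whereas the degree $\Deg(r)=(w(r,r+1)+w(r,r-1))/m(r)=s_r+2w(r-1,r)/m(r)$ is governed by the \emph{sum}, so I must keep $w(r-1,r)/m(r)$ bounded. Since $w(r-1,r)$ is a weighted partial sum of the $m(j)$, this forces $m$ to grow at least geometrically, and I would simply take $m(r):=2^r$. Then a telescoping estimate bounds $w(r-1,r)/2^{r-1}$ by a convergent geometric series, using only $s_j\leq s_0$, giving $w(r-1,r)/2^{r-1}\leq 3s_0$ and hence $\Deg(r)=s_r+w(r-1,r)/2^{r-1}\leq 4s_0$ uniformly in $r$. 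This yields an infinite, connected, bounded-degree birth-death chain on $\IN_0$ with $\kappa(r)=k_r$ for every $r$, which is exactly what the theorem asks for; the rest is the routine verification of these elementary estimates.
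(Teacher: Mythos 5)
Your proof is correct and follows essentially the same route as the paper: both construct a birth-death chain whose sphere curvatures are computed via the telescoping formula of Theorem~\ref{thm:line} and whose measure grows geometrically so that $w(r-1,r)/m(r)$ stays bounded, giving $\Deg_{\max}<\infty$. The only difference is the parametrization — you prescribe $\Delta f(r)=\sum_{j>r}k_j$ with $m(r)=2^r$ to get $\kappa(r)=k_r$ exactly, while the paper chooses $m(r)=w(r,r-1)/k_{r+1}$ and obtains $\kappa(r)=k_r+k_{r+1}$ — which is immaterial to the conclusion.
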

\begin{proof}
We define a birth-death chain $G=(\IN_0,w,m)$ inductively with $w$ symmetric and $m$ satisfying $m(0)=1$, $w(0,1) = 2 \sum_{i>0} k_i$ 
and for $r\geq 1$,
	\begin{align*}
		m(r) = \frac{w(r,r-1)}{k_{r+1}}\qquad \mbox{ and}  \qquad w(r,r+1) = 2m(r) \sum_{i>r} k_i.  
	\end{align*}
Note, in particular, that $\frac{w(r,r-1)}{m(r)} = k_{r+1}$ while $\frac{w(r-1, r)}{m(r-1)}= 2 \sum_{i >r-1}k_i$.

	Due to Theorem~\ref{thm:line}, for $r>1$,
	\begin{align*}
	\kappa(r) &= \kappa(r-1,r) \\&= \frac{w(r-1,r)-w(r-1,r-2)}{m(r-1)}- \frac{w(r,r+1)-w(r,r-1)}{m(r)}			 \\
	&= 2 \sum_{i>r-1} k_i - k_r - 2 \sum_{i>r} k_i + k_{r+1} \\
	&=k_r + k_{r+1} \geq k_r
	\end{align*}
	which also shows that $\sum_r \kappa(r)<\infty$.
Similarly, $\kappa(1)= 2k_1 + k_2 \geq k_1$.

	It is left to show that the graph has bounded degree.
	We have
	\begin{align*}
	\Deg(r) = \frac{w(r,r-1)}{m(r)} + \frac{w(r,r+1)}{m(r)}  = k_{r+1} + 2 \sum_{i>r}k_i \leq 3 C
	\end{align*}
	with $C:= \sum_r k_r < \infty$	by assumption. This finishes the proof.
\end{proof}

\begin{example} \label{Ex:positiveCurvInfiniteDiam}
In contrast to Theorem~\ref{thm:ImprovedDiamBound}, we now show that there exist graphs with uniformly positive curvature which are infinite.  We note that all such graphs must have unbounded vertex degree. 

We construct an infinite birth-death chain $(\IN_0,w,m)$ such that $\kappa(x,y) = K>0.$
We first let $w(r,r+1)$ be strictly positive and decreasing in $r \in \IN_0$.  By Theorem~\ref{thm:line} and Remark~\ref{rem:line}, it suffices to find a choice of measure $m$ such that $\kappa(0,r)=K$, that is, for $f=d(0,\cdot)$ 
$$
 \Delta f(r) = \Delta f(0) - Kr = \Deg(0)-Kr.
$$

Choose $m(0)$ such that $\Deg(0) < Kr$ for all $r\geq 1$.  For this it suffices that $m(0) > \frac{w(0,1)}{K}$.
Then, for $r\geq 1$, choose
$$
m(r) := \frac{w(r,r-1) - w(r,r+1)}{Kr - \Deg(0)}
$$
guaranteeing
$$
\Delta f(r) = \frac 1 {m(r)} (w(r,r+1) - w(r,r-1)) = \Deg(0) - Kr.
$$
We remark that $m(r) >0$ since $w(r,r+1)$ is strictly decreasing. 

\end{example}

\subsection{Finiteness of the measure}

In this section, we show that a suited positive lower bound on the curvature implies finite measure, that is, $m(V):= \sum_{x\in V} m(x) < \infty$.

\begin{theorem}[Curvature and finite measure]
Let $G=(V,w,m)$ be a graph. If
\[
\liminf_{R\to \infty}\sum_{r=1}^R \kappa(r)>\Deg(x_0),
\]
then $m(V) < \infty$.
\end{theorem}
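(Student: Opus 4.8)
The plan is to feed the distance function $f := d(x_0,\cdot)$ into the decaying-curvature Laplacian comparison, Theorem~\ref{thm:LaplaceComparisonNonConst}, and then integrate the resulting pointwise bound over the spheres $S_R := S_R(x_0)$. Recall that this theorem gives $\Delta f(x) \leq \Phi(R)$ for every $x \in S_R$, where $\Phi(R) = \Deg(x_0) - \sum_{r=1}^R \kappa(r)$. The hypothesis $\liminf_{R\to\infty} \sum_{r=1}^R \kappa(r) > \Deg(x_0)$ is precisely the statement that $\liminf_{R\to\infty}\Phi(R) < 0$, so I would first extract a uniform negative bound: there exist $\delta > 0$ and $R_0 \in \IN$ with $\Phi(R) \leq -\delta$ for all $R \geq R_0$.

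Next, I would turn the pointwise estimate into a sphere-by-sphere recursion. Since $f$ takes the value $R$ on $S_R$, a neighbor $y$ of $x \in S_R$ contributes $f(y)-f(x) \in \{-1,0,+1\}$ according to whether $y \in S_{R-1}$, $S_R$, or $S_{R+1}$, giving $m(x)\Delta f(x) = \sum_{y \in S_{R+1}} w(x,y) - \sum_{y \in S_{R-1}} w(x,y)$. Summing the comparison inequality weighted by $m(x)$ over $x \in S_R$ and using the notation $w(S_R,S_{R+1}) = \sum_{x \in S_R, y \in S_{R+1}} w(x,y)$ from Definition~\ref{def:AssociatedlineGraph} then yields
\begin{align*}
w(S_R,S_{R+1}) - w(S_{R-1},S_R) = \sum_{x \in S_R} m(x)\Delta f(x) \leq \Phi(R)\, m(S_R),
\end{align*}
which is exactly the integrated comparison already appearing in the proof of Theorem~\ref{thm:AssociatedLaplaceComparisonLine}. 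For $R \geq R_0$ this rearranges to $w(S_{R-1},S_R) - w(S_R,S_{R+1}) \geq \delta\, m(S_R)$.

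The final step is a telescoping sum. Summing the last inequality from $R = R_0$ to $N$ collapses the left-hand side, and discarding the non-negative term $w(S_N,S_{N+1}) \geq 0$ gives
\begin{align*}
\delta \sum_{R=R_0}^N m(S_R) \leq w(S_{R_0-1},S_{R_0}) - w(S_N,S_{N+1}) \leq w(S_{R_0-1},S_{R_0}).
\end{align*}
Since the right-hand side is a finite constant independent of $N$ (the graph is locally finite, so $S_{R_0}$ is a finite set), letting $N \to \infty$ shows $\sum_{R \geq R_0} m(S_R) < \infty$. Adding the finitely many terms $m(S_0), \dots, m(S_{R_0-1})$, each finite because the spheres are finite, gives $m(V) = \sum_{R \geq 0} m(S_R) < \infty$.

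I expect no serious obstacle here: once the curvature decay is packaged as the uniform bound $\Phi(R) \leq -\delta$, the argument is essentially the telescoping of a monotone sequence. The one point requiring minor care is recognizing that the weighted sphere sum of $\Delta f$ telescopes through the cross-sphere weights $w(S_{R-1},S_R)$, which is the same computation underlying the transition to the associated birth-death chain; alternatively, one could apply Theorem~\ref{thm:AssociatedLaplaceComparisonLine} to pass to $\widetilde G$ and run the identical telescoping argument there, using $m(V) = \sum_R \widetilde m(R)$.
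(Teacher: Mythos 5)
Your proof is correct and follows essentially the same route as the paper's: the paper formally passes to the associated birth-death chain via Corollary~\ref{cor:CurvCompare} and telescopes there, while you inline that reduction by summing the Laplacian comparison over spheres (exactly the computation in Theorem~\ref{thm:AssociatedLaplaceComparisonLine}) and telescope the cross-sphere weights directly. One trivial slip: the hypothesis is equivalent to $\limsup_{R\to\infty}\Phi(R)<0$, not merely $\liminf_{R\to\infty}\Phi(R)<0$; the uniform bound $\Phi(R)\leq-\delta$ for all large $R$ that you actually use is precisely the $\limsup$ statement, so nothing breaks.
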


\begin{proof}
We first show that it suffices to prove the theorem for birth-death chains.
Let $\widetilde G = (\IN_0,\widetilde w, \widetilde m)$ be the birth-death chain associated to $G$.
Due to Corollary~\ref{cor:CurvCompare}, we also have 
\[
\liminf_{R\to \infty}\sum_{r=1}^R \widetilde\kappa(r)>\Deg(x_0)
\]
where $\widetilde \kappa(r)$ are the sphere curvatures of $\widetilde G$. Assuming that the theorem is proven for birth-death chains, we obtain that $m(V)=\widetilde m(\IN_0) < \infty$ which would finish the proof.

Now we prove the theorem for birth-death chains.
Let $f=d(0,\cdot)$. Due to Theorem~\ref{thm:LaplaceComparisonNonConst} and since $\liminf_{R\to \infty} \sum_r^R \kappa(r)> \Deg(0)$, we get
\[ \limsup_{R \to \infty} \Delta f(R) = \limsup_{R \to \infty} \left(\Deg(0) - \sum_{r=1}^R \kappa(r) \right) = \Deg(0) - \liminf_{R \to \infty} \sum_{r=1}^R \kappa(r) <0\]
so that
there exists $\eps>0$ and $R>0$ such that $\Delta f(r)\leq -\eps$ for all $r\geq R$.
This implies that
\[
\eps m(r) \leq w(r,r-1) - w(r,r+1)
\]
for $r\geq R$. Summing up, we obtain 
\[
 \eps \sum_{r=R}^\infty m(r) \leq w(R,R-1)
\]
which yields the finiteness of the measure of the birth-death chain.
This finishes the proof.
\end{proof}

The theorem immediately gives the following corollary.
\begin{corollary}\label{cor:FiniteDiamBoundedMeasure}
Let $G=(V,w,m)$ be a graph. If $\liminf_{R\to \infty}\sum_{r=1}^R \kappa(r) = \infty$, then $m(V)$ is finite. If, additionally, $\inf_{x \in V} m(x) >0$, then $G$ is finite.
\end{corollary}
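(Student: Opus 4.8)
The plan is to deduce both assertions directly from the preceding Theorem (Curvature and finite measure), so that the corollary is essentially a repackaging of that result together with one elementary counting estimate. Throughout, the sphere curvatures $\kappa(r)$ are understood relative to a fixed root vertex $x_0 \in V$, and the same $x_0$ will be used when invoking the theorem.

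First I would observe that the hypothesis $\liminf_{R\to\infty}\sum_{r=1}^R \kappa(r)=\infty$ is strictly stronger than the hypothesis $\liminf_{R\to\infty}\sum_{r=1}^R \kappa(r)>\Deg(x_0)$ required by the theorem: the quantity $\Deg(x_0)$ is a fixed finite number, and any liminf equal to $+\infty$ exceeds it. Hence the theorem applies verbatim and yields $m(V)<\infty$, which is the first claim.

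For the second claim I would argue by a simple counting estimate. Writing $c:=\inf_{x\in V} m(x)>0$, we have
\[
m(V)=\sum_{x\in V} m(x)\geq c\cdot \# V.
\]
Since $m(V)<\infty$ by the first part and $c>0$, this forces $\# V<\infty$, so that $V$ is a finite set. As $G$ is locally finite by standing assumption, a finite vertex set immediately gives that $G$ is a finite graph, as desired.

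There is essentially no obstacle here: the corollary follows from the theorem together with the observation that a finite total measure combined with a uniform positive lower bound on the vertex weights permits only finitely many vertices. The only point requiring minor care is bookkeeping about the root vertex, namely that the condition $\liminf_{R\to\infty}\sum_{r=1}^R \kappa(r)=\infty$ is stated for the fixed $x_0$ with respect to which the $\kappa(r)$ are defined, and that this is precisely the $x_0$ appearing in $\Deg(x_0)$ in the theorem; no comparison across different root vertices is needed.
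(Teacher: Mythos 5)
Your proposal is correct and matches the paper's intent: the paper states this corollary as an immediate consequence of the preceding theorem (since $\liminf_{R\to\infty}\sum_{r=1}^R\kappa(r)=\infty$ certainly exceeds the finite quantity $\Deg(x_0)$), and the finiteness of $V$ then follows from $m(V)\geq \left(\inf_{x\in V}m(x)\right)\cdot \# V$ exactly as you argue.
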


Combining this with Corollary~\ref{cor:finite} we get the following dichotomy.
\begin{corollary}
Let $G=(V,w,m)$ be a graph and suppose that 
\[ \kappa(r)\geq \frac{C}{r} \]
for some $C>0$ and all large $r$.  Then either
$G$ is finite or $G$ is infinite with unbounded vertex degree and finite measure.
\end{corollary}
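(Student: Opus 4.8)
The plan is to read off the dichotomy directly from the two previous corollaries, using only the divergence of the harmonic series.

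First I would record the single analytic input: the hypothesis $\kappa(r) \geq C/r$ holding for all $r \geq r_0$ forces the partial sums of the sphere curvatures to diverge. Indeed,
\[
\sum_{r=1}^R \kappa(r) = \sum_{r=1}^{r_0-1} \kappa(r) + \sum_{r=r_0}^R \kappa(r) \geq \left(\sum_{r=1}^{r_0-1}\kappa(r)\right) + C\sum_{r=r_0}^R \frac 1 r,
\]
and since the initial segment contributes a fixed finite (possibly negative) constant while the harmonic tail diverges with $C>0$, I conclude $\liminf_{R\to\infty}\sum_{r=1}^R\kappa(r) = \infty$, and a fortiori $\limsup_{R\to\infty}\sum_{r=1}^R\kappa(r)=\infty$.

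Next I would split into the two alternatives of the dichotomy. If $G$ is finite, we are already in the first case and there is nothing to prove, so suppose $G$ is infinite. Since $G$ is connected and locally finite, every sphere $S_R=S_R(x_0)$ is non-empty (otherwise $V=B_{R-1}$ would be finite), so the sphere curvatures $\kappa(r)$ are defined for all $r\geq 1$ and the partial sums above are meaningful. Now I invoke the two corollaries: because $\limsup_{R\to\infty}\sum_{r=1}^R\kappa(r)=\infty$, Corollary~\ref{cor:finite} excludes bounded vertex degree, so $G$ has unbounded vertex degree; and because $\liminf_{R\to\infty}\sum_{r=1}^R\kappa(r)=\infty$, Corollary~\ref{cor:FiniteDiamBoundedMeasure} yields $m(V)<\infty$. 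Hence an infinite such $G$ is automatically of unbounded vertex degree and finite measure, which is precisely the second alternative.

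There is no genuine obstacle here, as the result is an immediate synthesis of Corollaries~\ref{cor:finite} and~\ref{cor:FiniteDiamBoundedMeasure}; the only point requiring a moment's care is that the ``all large $r$'' qualifier in the hypothesis is harmless, since the finitely many small-$r$ terms alter the partial sums only by a bounded amount and cannot affect divergence. It is worth noting for the reader that the second branch genuinely occurs: Example~\ref{Ex:positiveCurvInfiniteDiam} constructs an infinite birth-death chain of uniformly positive curvature (so $\kappa(r)=K\geq C/r$ for large $r$), which one checks indeed has unbounded vertex degree and finite measure, confirming that neither alternative is vacuous.
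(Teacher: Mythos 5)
Your proof is correct and is exactly the argument the paper intends: it states this corollary without proof as an immediate combination of Corollary~\ref{cor:finite} and Corollary~\ref{cor:FiniteDiamBoundedMeasure}, and your write-up simply makes that combination explicit (divergence of the harmonic tail, non-emptiness of spheres for an infinite connected locally finite graph, then the two corollaries). No gaps.
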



\section{Ricci curvature for continuous-time Markov processes}\label{sec:MarkovProcesses}

In this section, we compare our curvature notion to the curvature defined in \cite{veysseire2012coarse} for continuous time Markov processes, which generalize both locally finite graphs and Riemannian manifolds.
To make the comparison clear, we recall our curvature definition
\[
\kappa(x,y) =  \lim_{t \to 0^+} \frac 1 t \left(1 - \frac{W(m_x^t, m_y^t)}{d(x,y)} \right)
\]
where the discrete time
Markov kernel $m_x^t$ with laziness parameter $t \in(0,\infty)$ is given by
\[
\int{ f dm_x^t} = (f + t \Delta f)(x).
\]
Note that $m^t$ is only non-negative if the vertex degree is bounded and if $t$ is sufficiently small. By abuse of notation, we call $m^t$ a Markov kernel in any case.

The idea to define curvature in \cite{veysseire2012coarse} is to replace the measure $m_x^t$ by the continuous time heat kernel $p_x^t$ which has already appeared in Theorem~\ref{thm:gradientGraphs} and is
given by
\[
\int{ f dp_x^t} := P_t f (x).
\]
We note that this is equivalent to
\[
p_x^t (y)= P_t 1_y(x) = \frac{m(y)}{m(x)}P_t 1_x (y).
\]

Due to Taylor's theorem, it is reasonable to hope that $m_x^t$ is a good approximation for $p_x^t$ as $t \to 0^+$. Criteria for this approximation will be investigated in the next subsection.

Corresponding to  \cite[Definition~6]{veysseire2012coarse}, the coarse Ricci curvature on stochastically complete, continuous time Markov processes
is defined by
\[
\overline \kappa(x,y) := \limsup_{t \to 0^+} \frac 1 t \left( 1 - \frac{W(p_x^t, p_y^t)}{d(x,y)} \right)
\]
and
\[
\underline \kappa(x,y) :=  \liminf_{t \to 0^+} \frac 1 t \left( 1 - \frac{W(p_x^t, p_y^t)}{d(x,y)} \right).
\]
We recall that $\overline \kappa$ and $\underline \kappa$ do not coincide in general (see e.g. \cite[Example~8]{veysseire2012coarse}).
Furthermore, the above definition only makes sense in the stochastically complete case since, otherwise, $p_x^t$ is not a probability measure and, therefore, the Wasserstein distance is not well defined.

The main result of  \cite{veysseire2012coarse} is the equivalence of the lower curvature bound $\overline \kappa(x,y) \geq K$ and the Wasserstein contraction property
\[
W(p_x^t,p_y^t) \leq d(x,y)e^{-Kt}.
\]
We note that the same statement for a lower bound on $\kappa$ was shown in Theorem~\ref{thm:gradientGraphs}. 
We will show in Corollary~\ref{cor:MarkovChains} that $\kappa=\underline\kappa=\overline \kappa$ when assuming that $\kappa$ is uniformly bounded from below.  Therefore, the result in  \cite{veysseire2012coarse} combined with this equality gives an alternative proof of Theorem~\ref{thm:gradientGraphs}.

\subsection{Discrete and continuous time Markov kernels}
We will next give conditions guaranteeing that  discrete and continuous time Markov kernels approximate each other.

As a convenient notation, we extend the definition of the  semigroup $P_t$ to possibly unbounded non-negative functions.
\begin{defn}
For $f\geq 0$, we define \[P_t f :=  \sup_{\substack{0\leq g\leq f \\ g \in \ell_\infty(V)}} P_t g. \]
\end{defn}
The aim of this subsection is to prove that $W(p_x^t,m_x^t)=O(t^2)$ if and only if $P_t d(x,\cdot)<\infty$ for small $t>0$.
As a first step, we show a uniform boundedness property of the semigroup when applied to unbounded functions.
\begin{lemma}\label{lem:uniformFinitePt}
Let $x \in V$ and let $f \geq 0$. If $P_T f(x) < \infty$ for some $T>0$, then
\begin{enumerate}[(i)]
\item $
\sup_{t \in [0,T]} P_t f(x) < \infty.
$
\item $P_t f < \infty$ for all $t < T$.
\end{enumerate}
\end{lemma}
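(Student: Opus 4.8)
The plan is to replace the variational definition of $P_t f$ by an explicit kernel sum and then exploit the Chapman--Kolmogorov identity together with the positivity improving property of the semigroup. \textbf{Step 1 (kernel representation).} Write $k_t(x,y):=P_t 1_y(x)\ge 0$. For bounded $g$ one has $P_t g(x)=\sum_{y}k_t(x,y)g(y)$: this holds for finitely supported $g$ by linearity and extends to $g\in\ell_\infty(V)$ by dominated convergence, using $\sum_y k_t(x,y)=P_t\mathbf 1(x)\le 1$. Applying this to an increasing sequence $g_n\nearrow f$ of bounded functions (e.g. $g_n=f\wedge n$) and invoking monotone convergence, the definition $P_t f=\sup_{0\le g\le f,\,g\in\ell_\infty(V)}P_t g$ yields
\[
P_t f(z)=\sum_{y\in V}k_t(z,y)\,f(y)\in[0,\infty]
\]
for every $z\in V$, $t\ge 0$ and $f\ge 0$. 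In particular the hypothesis reads $\sum_y k_T(x,y)f(y)=P_Tf(x)<\infty$.

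\textbf{Step 2 (single-term Chapman--Kolmogorov bound and part (ii)).} Since $P_{s+t}1_y=P_s(P_t 1_y)$ and $P_t1_y\in\ell_\infty(V)$, Step 1 gives the Chapman--Kolmogorov identity $k_{s+t}(x,y)=\sum_{z}k_s(x,z)k_t(z,y)$. Dropping all but one non-negative summand, for any $z\in V$ and $0\le t\le T$ one has $k_T(x,y)\ge k_{T-t}(x,z)\,k_t(z,y)$, whence $k_t(z,y)\le k_T(x,y)/k_{T-t}(x,z)$ whenever $k_{T-t}(x,z)>0$. Because $G$ is connected, $P_s$ is positivity improving, so $k_{T-t}(x,z)=P_{T-t}1_z(x)>0$ for every $z$ as soon as $T-t>0$. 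Multiplying by $f(y)$ and summing over $y$ then gives, for $t<T$,
\[
P_tf(z)=\sum_y k_t(z,y)f(y)\le\frac{1}{k_{T-t}(x,z)}\sum_y k_T(x,y)f(y)=\frac{P_Tf(x)}{k_{T-t}(x,z)}<\infty,
\]
which is exactly $(ii)$.

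\textbf{Step 3 (uniform bound at $x$, part (i)).} For $(i)$ I would take $z=x$ in the inequality of Step 2, obtaining $P_tf(x)\le P_Tf(x)/k_{T-t}(x,x)$ for all $t\in[0,T]$; here the endpoint $t=T$ is included since $k_0(x,x)=1_x(x)=1>0$. The function $s\mapsto P_s1_x(x)=k_s(x,x)$ is continuous on $[0,T]$ with $P_01_x(x)=1$, and it is strictly positive for $s>0$ by positivity improving, so $c:=\inf_{s\in[0,T]}P_s1_x(x)>0$. Therefore $\sup_{t\in[0,T]}P_tf(x)\le P_Tf(x)/c<\infty$.

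The only genuinely delicate points are the passage to the kernel representation for unbounded $f$, where I rely on monotone convergence and on $\sum_y k_t(x,y)\le 1$ to legitimise the term-by-term manipulations, and the observation that the constant $c$ in Step~3 remains positive up to and including the endpoint $t=T$; this is precisely what lets the closed interval $[0,T]$ work rather than just $[0,T)$, and it uses continuity of the semigroup at $t=0$ together with positivity improving for $t>0$. The main conceptual step is recognising that a single term of the Chapman--Kolmogorov sum already dominates $k_t$ by $k_T$, which reduces both assertions to the finiteness hypothesis at the single point $x$.
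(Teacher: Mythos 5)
Your proof is correct, and part (ii) is, modulo the kernel notation, exactly the paper's argument: your single-term Chapman--Kolmogorov bound $k_T(x,y)\ge k_{T-t}(x,z)\,k_t(z,y)$ is the paper's inequality $P_T g(x)\ge P_{T-t}1_z(x)\cdot P_t g(z)$ for bounded $0\le g\le f$, and both conclude via positivity improving. For part (i), however, you take a genuinely different route: you reuse the same single-term bound with $z=x$ and then need the extra observation that $c:=\inf_{s\in[0,T]}P_s 1_x(x)>0$, obtained from continuity on the compact interval together with strict positivity. The paper instead derives $P_t g(x)\le e^{(T-t)\Deg(x)}P_T g(x)$ directly from the differential inequality $\partial_s P_s g(x)=\Delta P_s g(x)\ge -\Deg(x)P_s g(x)$ for $g\ge 0$, which requires no kernel representation and gives the explicit constant $e^{T\Deg(x)}$. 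The two are quantitatively linked, since $P_s 1_x(x)\ge e^{-s\Deg(x)}$ shows your $1/c$ is at most $e^{T\Deg(x)}$. What your version buys is a unified treatment of both parts from a single inequality; what the paper's version of (i) buys is that it works directly with the definition of $P_t f$ as a supremum over bounded minorants, avoiding the standard but unproved monotone-limit facts you invoke in Step 1 to identify $P_t f$ with the kernel sum $\sum_y k_t(\cdot,y)f(y)$ for unbounded $f$ --- facts which are true for the minimal semigroup but which you should at least cite rather than dispatch with ``dominated convergence.''
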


\begin{proof}
We first prove $(i)$.
Let $t \in [0,T]$ and let $g \in \ell_\infty(V)$ be such that $0\leq g \leq f$.
Then, \[P_t g(x) \leq e^{(T-t)\Deg(x)}P_T g(x) \leq e^{T\Deg(x)} P_T f(x) < \infty\] independently of $t$ and $g$.
Taking the supremum over $t \in [0,T]$ and $g$ yields $(i)$.

We now prove $(ii)$. Let $t<T$ and $y \in V$.
Let $g \in \ell_\infty(V)$ be such that $0\leq g \leq f$.
Observe that $P_T g(x) \geq  P_{T-t}1_y(x) \cdot  P_t g(y)$ where $P_{T-t}1_y(x)>0$ due to connectedness.
Hence, taking the supremum over $g$ yields
\[
P_t f(y) \leq \frac{P_T f(x)}{P_{T-t}1_y(x)} < \infty
\]
due to assumption. This proves $(ii)$ and finishes the proof of the lemma.
\end{proof}

We now characterize when the ball measure $m_x^t$ approximates the heat kernel measure $p_x^t$ as $t \to 0^+$.

\begin{proposition}\label{pro:PtdANDWpm} Suppose that $G=(V,w,m)$ is stochastically complete and
let $x \in V$.
The following statements are equivalent:
\begin{enumerate}[(1)]
\item $P_t d(x,\cdot) < \infty$ for some $t>0$.
\item $P_t f < \infty$ for all $f\geq 0$ with $\|\nabla f \|_\infty < \infty$ and some $t>0$.
\item $W(p_x^t,m_x^t) = O(t^2)$ as $t \to 0^+$.
\item $W(p_x^t,m_x^t) < \infty$ for some $t>0$.
\end{enumerate}
\end{proposition}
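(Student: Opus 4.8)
The plan is to close the cycle $(2)\Rightarrow(1)$, $(1)\Rightarrow(2)$, $(2)\Rightarrow(3)$, $(3)\Rightarrow(4)$, $(4)\Rightarrow(1)$, so that all four statements become equivalent. Throughout I use that stochastic completeness makes each $p_x^t$ a probability measure, so that for non-negative $f$ the extended semigroup is represented by $P_tf(z)=\int f\,dp_z^t$ (apply monotone convergence to the truncations $f\wedge n$); in particular $P_t$ is an $\ell_\infty$-contraction since $P_t\mathbf 1=\mathbf 1$.

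For the easy implications I would argue as follows. The implication $(2)\Rightarrow(1)$ is immediate, since $d(x,\cdot)$ is non-negative and lies in $Lip(1)$. For $(1)\Rightarrow(2)$ I would start from $P_{t_0}d(x,\cdot)(x)<\infty$, use Lemma~\ref{lem:uniformFinitePt}(ii) to get $P_t d(x,\cdot)<\infty$ as a function for $t<t_0$, and then, for non-negative $f$ with $\|\nabla f\|_\infty=L<\infty$, apply the pointwise bound $f\le (f(z)+L\,d(z,x))\mathbf 1+L\,d(x,\cdot)$ together with $P_t\mathbf 1=\mathbf 1$ to conclude $P_tf(z)\le f(z)+L\,d(z,x)+L\,P_td(x,\cdot)(z)<\infty$. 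The implication $(3)\Rightarrow(4)$ is trivial. For $(4)\Rightarrow(1)$ I would choose a coupling $\rho$ of $p_x^t$ and $m_x^t$ with transport cost $\sum_{a,b}\rho(a,b)d(a,b)\le W(p_x^t,m_x^t)+1<\infty$ and, since $m_x^t$ is finitely supported and hence has finite first moment, estimate $\int d(x,\cdot)\,dp_x^t=\sum_{a,b}\rho(a,b)d(x,a)\le\sum_{a,b}\rho(a,b)\big(d(x,b)+d(b,a)\big)<\infty$, which is $(1)$ at $x$ (and Lemma~\ref{lem:uniformFinitePt}(ii) upgrades it to finiteness as a function).

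The content is in $(2)\Rightarrow(3)$. I would first bound the Wasserstein distance by a distance-weighted total variation: testing against $g\in Lip(1)$ normalized by $g(x)=0$ gives $W(p_x^t,m_x^t)\le\sum_y d(x,y)\,|p_x^t(y)-m_x^t(y)|$. Since $m_x^t$ is supported on $B_1(x)$, I split the right-hand side into the finite sum over $S_1(x)$ and the tail over $\{d(x,\cdot)\ge2\}$. On the finite part, Taylor's theorem applied to $s\mapsto P_s 1_y(x)$ gives $p_x^t(y)-m_x^t(y)=\tfrac{t^2}{2}P_\xi\Delta^2 1_y(x)$; because $1_y$ is finitely supported, $\Delta^2 1_y\in\ell_\infty(V)$, and the $\ell_\infty$-contraction bounds this by $\tfrac{t^2}{2}\|\Delta^2 1_y\|_\infty$, so summing over the finitely many $y\in S_1(x)$ yields $O(t^2)$.

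The tail is the main obstacle. Writing $h:=d(x,\cdot)\,\mathbf 1_{\{d(x,\cdot)\ge2\}}\ge0$, the tail equals $P_t h(x)$, and the decisive feature is that $h\equiv0$ on all of $B_1(x)$. I would exploit this via Duhamel's first-jump formula for the associated continuous-time chain: solving $\partial_t P_th(x)=-\Deg(x)P_th(x)+\tfrac1{m(x)}\sum_{z\sim x}w(x,z)P_th(z)$ with $h(x)=0$ gives $P_th(x)=\int_0^t e^{-\Deg(x)(t-s)}\tfrac1{m(x)}\sum_{z\sim x}w(x,z)P_sh(z)\,ds$. For each $z\in S_1(x)$ one again has $h(z)=0$, so the same formula applied at $z$ bounds $P_sh(z)\le s\cdot\tfrac1{m(z)}\sum_{z'\sim z}w(z,z')\sup_{r\le T}P_rh(z')$. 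The suprema are finite for the finitely many $z'\in B_2(x)$ because $h\le d(x,\cdot)$ and $(1)$/$(2)$ together with Lemma~\ref{lem:uniformFinitePt}(i) give $\sup_{r\le T}P_r d(x,\cdot)(z')<\infty$. Substituting back produces a double time-integral, whence $P_th(x)\le C t^2$ and the tail is $O(t^2)$. The technical point to verify is that $P_th$, defined on the unbounded $h$ by the supremum, genuinely satisfies the pointwise heat equation and the Duhamel identity; this follows by a routine monotone-limit argument over the truncations $h\wedge n$, since at the fixed vertex $x$ the Laplacian is a finite sum. Combining the finite part with the tail gives $W(p_x^t,m_x^t)=O(t^2)$ and closes the cycle.
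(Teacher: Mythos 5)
Your proof is correct, and for the substantive implication it takes a genuinely different route from the paper. The paper proves $(1)\Rightarrow(3)$ by first establishing, via Kantorovich duality, the identity $W(p_x^t,m_x^t)=\sup_f\,(P_tf-f-t\Delta f)(x)$ over non-negative $1$-Lipschitz $f$ normalized by $f(x)=1$, and then applying a single Taylor expansion uniformly over this class: the bound $0\le f\le 1+d(x,\cdot)$ together with Lemma~\ref{lem:uniformFinitePt}(i) gives a uniform bound on $P_sf$ over $B_2(x)$, hence on $\Delta\Delta P_\delta f(x)$, and the remainder is $O(t^2)$. You instead majorize $W$ by the distance-weighted total variation $\sum_y d(x,y)\lvert p_x^t(y)-m_x^t(y)\rvert$, treat the finitely many points of $S_1(x)$ by Taylor on the heat kernel itself, and control the tail $P_t\bigl(d(x,\cdot)1_{\{d(x,\cdot)\ge 2\}}\bigr)(x)$ by a two-fold first-jump expansion, exploiting that the function vanishes on $B_1(x)$; this is more probabilistic and avoids duality in the upper bound (though you still invoke the coupling formulation for your $(4)\Rightarrow(1)$, where the paper instead reuses its duality identity with the truncations $(d(x,\cdot)+1)\wedge n$). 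Each approach has its merits: the paper's is shorter and produces a formula that is recycled later, while yours isolates more transparently \emph{why} the error is second order (two jumps are needed to leave $B_1(x)$). Two small points you should tighten. First, writing the Lagrange remainder as $\tfrac{t^2}{2}P_\xi\Delta^2 1_y(x)$ presumes the commutation $\Delta P_s=P_s\Delta$ on $C_c(V)$, which the paper itself flags as delicate in the remark after Lemma~\ref{lem:subharmonicPt}; it does hold here since $\Delta C_c(V)\subseteq C_c(V)$ for locally finite graphs, but it is cleaner to write the remainder as $\tfrac{t^2}{2}\Delta\Delta P_\xi 1_y(x)$ and bound it using $0\le P_\xi 1_y\le 1$ and the fact that $\Delta\Delta$ at $x$ involves only the finitely many vertices of $B_2(x)$ with fixed coefficients. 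Second, your Duhamel step needs the integrated heat equation for $P_th$ at the finitely many vertices of $B_1(x)$; your proposed monotone-limit argument over $h\wedge n$ does work, but you should note that the domination required to pass to the limit in $\int_0^t\Delta P_s(h\wedge n)\,ds$ is exactly $\sup_{r\le T}P_rh(z')<\infty$ on $B_2(x)$, which is where Lemma~\ref{lem:uniformFinitePt} (first part (ii), then part (i)) enters.
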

\begin{rem}
We remark that the above properties also play an important role as a standing assumption in \cite{joulin2007poisson} denoted by 
$P_t (x,\cdot) \in \mathscr{P}_1(E)$.
\end{rem}

\begin{proof}
The implication (1) $\Rightarrow$ (2) follows since $f \leq d(x,\cdot) + f(x)$ implies that $P_t f \leq f(x) + P_td(x,\cdot) < \infty$.

The implication (2) $\Rightarrow$ (1) is obvious.

We now show that (1) $\Rightarrow$ (3).
Due to Kantorovich duality, we have
\begin{align*}
W(p_x^t,m_x^t) &= \sup_{f \in Lip(1) \cap \ell_\infty(V)} \sum_{y \in V}     f(y) \left( p_x^t(y) - m_x^t(y) \right) \\
&= \sup_{f \in Lip(1) \cap \ell_\infty(V)} \sum_{y\in V}     f(y) \left( P_t 1_y(x) - 1_y(x) - t \Delta 1_y(x) \right)\\
&= \sup_{f \in Lip(1) \cap \ell_\infty(V)}  \left( P_t f - f - t \Delta f \right)(x).
\end{align*}
When optimizing, we can assume that $f(x)=1$ without loss of generality.   Since $f \in Lip(1)$, replacing $f$ by its positive part does not change the values of $f$ on $B_1(x)$ and does not decrease the values on $V\setminus B_1(x)$. 
Since   $p_x^t - m_x^t$ is non-negative on $V \setminus B_1(x)$, the objective function $\sum_y f(y) \left( p_x^t(y) - m_x^t(y) \right)$ is not decreased when replacing $f$ by its positive part. Therefore, we can assume that $f(x)=1$ and $f\geq 0$ when optimizing. This gives
\begin{align}\label{eq:WPepsmEps}
W(p_x^t,m_x^t) &  =  \sup_{\substack{f \in Lip(1) \cap \ell_\infty(V)  \\ f\geq 0, f(x)=1}}  (P_t f - f - t \Delta f)(x). 
\end{align}

Therefore, let $f \in Lip(1) \cap \ell_\infty(V)$ with $f\geq0$ and $f(x)=1$.
Then, $0 \leq f \leq 1 + d(x,\cdot).$
Due to Lemma~\ref{lem:uniformFinitePt} $(i)$ and by assumption, there exists $C>0$ such that $P_t(1 + d(x,\cdot)) \leq C$ on $B_2(x)$ for all $t \in [0,T]$.
Thus, we also have $0 \leq P_t f \leq C$ on $B_2(x)$ for all $t \in [0,T]$. This yields the existence of $C'$ independent of $f$ and $t \in [0,T]$ such that
\[
|\Delta \Delta P_t f| \leq C'.
\]

Due to Taylor's theorem, there exists  $\delta \in [0,t]$ such that
\begin{align*}
(P_t f - f  -  t \Delta f)(x) = \frac {t^2} 2 \Delta \Delta P_\delta f(x) \leq \frac {t^2} 2 C' = O(t^2).
\end{align*}
Putting this together with (\ref{eq:WPepsmEps}) proves that (1) $\Rightarrow$ (3).

The implication (3) $\Rightarrow$ (4) is obvious.

We now show that (4) $\Rightarrow$ (1).
Let $f = d(x,\cdot)+1$ and $f_n := f \wedge n \in \ell_\infty(V)$.
Due to (\ref{eq:WPepsmEps}), we have 
\[
\infty > W(p_x^t,m_x^t) \geq (P_t f_n - f_n -t \Delta f_n)(x)
\]
yielding 
\[
P_t f(x) = \sup_n P_t f_n(x) \leq W(p_x^t,m_x^t) + f(x) + t \Delta f(x) <\infty.
\]
Thus by Lemma~\ref{lem:uniformFinitePt} $(ii)$,
$P_s f < \infty$ for all $s < t$ as desired.
This finishes the proof.
\end{proof}

\subsection{Another Ricci curvature characterization}
We now prove that on locally finite graphs with Ricci curvature bounded from below, our definition of $\kappa$ coincides with $\overline{\kappa}$ and $\underline{\kappa}$ as defined in \cite[Definition~6]{veysseire2012coarse}.
This will yield another characterization of lower Ricci curvature bounds by combining with \cite[Theorem 9]{veysseire2012coarse}.

As a preparation, we show the subexponential behavior of non-negative $\lambda$-subharmonic functions under the heat equation.
\begin{lemma}\label{lem:subharmonicPt}
Let $f \geq 0$ satisfy $\Delta f \leq \lambda f$ for some $\lambda >0$. Then, $P_t f \leq e^{\lambda t} f$.
\end{lemma}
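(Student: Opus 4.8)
The plan is to exhibit $u(x,t) := e^{\lambda t} f(x)$ as a supersolution of the heat equation and then compare it against $P_t g$ for bounded $0 \le g \le f$, using a finite exhaustion to circumvent the possible unboundedness of $f$. First I would record, using the hypothesis $\Delta f \le \lambda f$, that
\[ (\partial_t - \Delta) u = e^{\lambda t}\left(\lambda f - \Delta f\right) \ge 0, \]
so that $u$ is a supersolution with initial data $u(\cdot,0) = f$. Since the extended semigroup is defined by $P_t f = \sup\{ P_t g : g \in \ell_\infty(V),\ 0 \le g \le f \}$, it suffices to prove $P_t g \le e^{\lambda t} f$ for every bounded $g$ with $0 \le g \le f$, and then take the supremum.

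Next I would fix a finite exhaustion $W_1 \subset W_2 \subset \ldots$ with $\bigcup_n W_n = V$ and pass to the Dirichlet Laplacians $\Delta_{W_n} h := 1_{W_n} \Delta(1_{W_n} h)$, whose semigroups $e^{t\Delta_{W_n}}$ are positivity preserving and increase to $P_t g$ pointwise as $n \to \infty$ (as used in the proof of Lemma~\ref{lem:RicImpliesGradient} and guaranteed by Theorem~\ref{thm:Cutoff}~(ix)). The key observation, which is exactly where the sign hypothesis $f \ge 0$ enters, is that for $x \in W_n$
\[ \Delta_{W_n} f(x) - \Delta f(x) = -\frac{1}{m(x)} \sum_{y \notin W_n} w(x,y) f(y) \le 0, \]
so that $\Delta_{W_n} f \le \Delta f \le \lambda f$ on $W_n$ as well; the truncated data are still $\lambda$-subharmonic for the Dirichlet operator.

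On the finite set $W_n$ the comparison then becomes elementary. Writing $v_n(t) := e^{\lambda t} f - e^{t\Delta_{W_n}} g$, one checks that on $W_n$ we have $v_n(0) = f - g \ge 0$ and $(\partial_t - \Delta_{W_n}) v_n = e^{\lambda t}(\lambda f - \Delta_{W_n} f) \ge 0$. By Duhamel's formula,
\[ v_n(t) = e^{t\Delta_{W_n}} v_n(0) + \int_0^t e^{(t-s)\Delta_{W_n}} \left[(\partial_s - \Delta_{W_n}) v_n\right](s)\, ds, \]
and since $e^{\cdot\,\Delta_{W_n}}$ preserves positivity while both $v_n(0)$ and the integrand are nonnegative, I conclude $v_n(t) \ge 0$, i.e. $e^{t\Delta_{W_n}} g \le e^{\lambda t} f$ on $W_n$ (the inequality being trivial off $W_n$). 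Letting $n \to \infty$ gives $P_t g \le e^{\lambda t} f$, and taking the supremum over bounded $0 \le g \le f$ yields $P_t f \le e^{\lambda t} f$.

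The main obstacle is entirely the unboundedness of $f$: a direct maximum-principle comparison on all of $V$ is unavailable because $e^{\lambda t} f$ need not be bounded nor lie in a convenient domain of the generator. Reducing to the finite Dirichlet problems removes this difficulty, and the only analytic input beyond finite-dimensional positivity is the standard monotone convergence $e^{t\Delta_{W_n}} g \uparrow P_t g$ for the minimal heat semigroup.
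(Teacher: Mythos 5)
Your argument is correct and follows essentially the same route as the paper: exhaust by finite sets, observe that the Dirichlet truncation preserves the $\lambda$-subharmonicity precisely because $f\geq 0$, and conclude by positivity preservation of $e^{t\Delta_{W_n}}$ before passing to the limit. The only cosmetic difference is that the paper shows $t\mapsto e^{-\lambda t}e^{t\Delta_W}f_W$ is nonincreasing by differentiating and commuting $\Delta_W$ through the semigroup, whereas you package the same positivity input via Duhamel's formula for the difference $v_n$.
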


\begin{proof}
Let $W \subset V$ be finite.
Let $f_W:= f 1_W$ and let $e^{t\Delta_W}$ be the semigroup corresponding to $\Delta_W$ with $\Delta_W g := 1_W \Delta (g 1_W)$ representing Dirichlet boundary conditions.
First, we observe that $\Delta_W f_W \leq \lambda f_W$ since $\Delta f \leq \lambda f$ and $f \geq 0$.

Let $\phi := e^{t\left(\Delta_W-\lambda \right)} f_W$. Then, 
\[
\partial_t \phi = e^{-\lambda t} \left( \Delta_W -\lambda \right) e^{t\Delta_W} f_W = e^{-\lambda t} e^{t\Delta_W} \left( \Delta_W -\lambda \right) f_W \leq 0  
\]
showing that $\phi(t)=e^{-\lambda t}e^{t \Delta_W} f_W \leq  f_W = \phi(0)$.
Since $e^{t_W} f_W \to P_t f$ pointwise as $W\to V$, we obtain the desired claim that $P_t f \leq e^{\lambda t} f$.
\end{proof}
\begin{rem}
We remark that the step in the proof above where we take Dirichlet boundary conditions is necessary to ensure that $\Delta P_t f = P_t \Delta f$ which generally only holds true on the domain $D(\Delta) \subseteq \ell^2(V)$ on which $\Delta$ is self-adjoint.
\end{rem}

We next prove that a lower Ricci curvature bound implies that $P_t d(x,\cdot)<\infty$.
\begin{lemma}\label{lem:Ptd} 
Let $x \in V$ and $f := d(x,\cdot)$.  If $Ric(G) \geq -K$ for some $K >0$,
then 
\[P_t f \leq e^{Kt} (f + \Deg(x)/K)<\infty.\]
\end{lemma}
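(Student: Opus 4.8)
The plan is to combine the Laplacian comparison principle with the subexponential growth estimate for subharmonic functions established in Lemma~\ref{lem:subharmonicPt}.

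First, I would apply the Laplacian comparison, Theorem~\ref{thm:LaplaceCompare}, at the fixed vertex $x$. Since $Ric(G) \geq -K$ gives in particular $\kappa(x,\cdot) \geq -K$, the theorem yields
\[
\Delta f = \Delta d(x,\cdot) \leq \Deg(x) + K\, d(x,\cdot) = \Deg(x) + Kf.
\]

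Next, I would shift $f$ by a constant to absorb the inhomogeneous term $\Deg(x)$. Setting $g := f + \Deg(x)/K$, one has $g \geq 0$ and
\[
\Delta g = \Delta f \leq \Deg(x) + Kf = K\left(f + \Deg(x)/K\right) = Kg,
\]
so that $g$ is a non-negative $K$-subharmonic function. Applying Lemma~\ref{lem:subharmonicPt} with $\lambda = K > 0$ then gives $P_t g \leq e^{Kt} g$.

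Finally, since $f \leq g$ and the extended semigroup $P_t$ is monotone, I conclude
\[
P_t f \leq P_t g \leq e^{Kt} g = e^{Kt}\left(f + \Deg(x)/K\right),
\]
which is finite at each vertex because $f = d(x,\cdot)$ takes finite values on the connected locally finite graph. The only point requiring care is that $g$ is unbounded, so one must invoke the extended definition of $P_t$ on non-negative functions; this causes no genuine difficulty, since Lemma~\ref{lem:subharmonicPt} is already formulated for general (possibly unbounded) non-negative subharmonic functions and the monotonicity $P_t f \leq P_t g$ is immediate from that definition. Thus the proof reduces to bookkeeping once the comparison bound $\Delta f \leq \Deg(x) + Kf$ is in hand.
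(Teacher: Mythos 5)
Your proposal is correct and is essentially identical to the paper's own proof: both apply Theorem~\ref{thm:LaplaceCompare} to show that $f + \Deg(x)/K$ is a non-negative $K$-subharmonic function, then invoke Lemma~\ref{lem:subharmonicPt} together with monotonicity of the extended semigroup. Your additional remarks about absorbing the inhomogeneous term and about the extended definition of $P_t$ on unbounded functions simply make explicit what the paper leaves implicit.
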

\begin{proof}
Due to the Laplacian comparison principle, Theorem~\ref{thm:LaplaceCompare}, we have that
\[
\Delta(f + \Deg(x)/K) = \Delta f \leq K(f + \Deg(x)/K).
\]
Thus, Lemma~\ref{lem:subharmonicPt} yields
\[
P_t f \leq P_t (f + \Deg(x)/K) \leq e^{Kt} (f + \Deg(x)/K)\]
as desired.
\end{proof}

We now present the main theorem of this section.

\begin{theorem}[Continuous and discrete time curvature]\label{thm:MarkovChains}
Let $G=(V,w,m)$ be a stochastically complete graph. Suppose that $P_t d(x_0,\cdot) < \infty$ for some $x_0 \in V$ and some $t>0$.  Then, for all $x \neq y$,
\[
\kappa(x,y) = \lim_{t \to 0^+} \frac 1 t \left(1 - \frac{W(p_x^t,p_y^t)}{d(x,y)} \right) = \overline\kappa(x,y) = \underline \kappa(x,y).
\]
\end{theorem}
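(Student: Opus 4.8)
The plan is to show that the continuous-time difference quotient $\frac{1}{t}\left(1 - W(p_x^t,p_y^t)/d(x,y)\right)$ and the discrete-time difference quotient $\frac{1}{t}\left(1 - W(m_x^t,m_y^t)/d(x,y)\right)$, whose limit \emph{defines} $\kappa(x,y)$, differ by a quantity that vanishes as $t\to 0^+$. The engine for this is Proposition~\ref{pro:PtdANDWpm}, which converts the finiteness hypothesis into the quantitative comparison $W(p_z^t,m_z^t)=O(t^2)$ of the heat kernel with the ball measure; the triangle inequality for the Wasserstein distance then transfers this to a comparison of $W(p_x^t,p_y^t)$ with $W(m_x^t,m_y^t)$.

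First I would upgrade the single-vertex hypothesis $P_t d(x_0,\cdot)<\infty$ to the statement that $P_s d(z,\cdot)<\infty$ for every $z\in V$ and every sufficiently small $s>0$, so that Proposition~\ref{pro:PtdANDWpm} becomes applicable at both $x$ and $y$. This uses the triangle inequality $d(z,\cdot)\leq d(z,x_0)+d(x_0,\cdot)$ together with stochastic completeness to absorb the additive constant $d(z,x_0)$: if a bounded $g$ satisfies $0\leq g\leq c+h$ then $P_s g = P_s(g\wedge c)+P_s((g-c)_+)\leq c+P_s h$ since $P_s\mathbf 1=\mathbf 1$, whence $P_s d(z,\cdot)\leq d(z,x_0)+P_s d(x_0,\cdot)$; finiteness of the right-hand side for small $s$ follows from Lemma~\ref{lem:uniformFinitePt}~$(ii)$.

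Then I would apply the implication $(1)\Rightarrow(3)$ of Proposition~\ref{pro:PtdANDWpm} at $x$ and at $y$ to obtain $W(p_x^t,m_x^t)=O(t^2)$ and $W(p_y^t,m_y^t)=O(t^2)$. Since the Kantorovich dual formulation $W(\mu,\nu)=\sup_{f\in Lip(1)\cap\ell_\infty(V)}\left(\int f\,d\mu-\int f\,d\nu\right)$ satisfies the triangle inequality for any measures of equal total mass (all four measures here have mass one, $p_z^t$ by stochastic completeness and $m_z^t$ since $\Delta\mathbf 1=0$), I obtain
\[
\left| W(p_x^t,p_y^t) - W(m_x^t,m_y^t) \right| \leq W(p_x^t,m_x^t)+W(p_y^t,m_y^t)=O(t^2).
\]
Dividing by $t\,d(x,y)$ shows that the two difference quotients differ by $O(t)$, hence share the same limit as $t\to 0^+$. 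As the discrete-time quotient converges to $\kappa(x,y)$ by definition, so does the continuous-time quotient; in particular the $\limsup$ and $\liminf$ defining $\overline\kappa(x,y)$ and $\underline\kappa(x,y)$ both equal $\kappa(x,y)$.

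The main obstacle is the bookkeeping in the first step: one must verify that the extended semigroup $P_s$ on unbounded non-negative functions is monotone and correctly absorbs additive constants under stochastic completeness, and that finiteness at the single base point $x_0$ genuinely propagates to every vertex for all small times through Lemma~\ref{lem:uniformFinitePt}. Once the comparison $W(p_z^t,m_z^t)=O(t^2)$ is available at both vertices, the remaining triangle-inequality argument is routine.
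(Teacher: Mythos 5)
Your proposal is correct and follows essentially the same route as the paper: the paper's proof consists precisely of invoking Proposition~\ref{pro:PtdANDWpm} and the triangle inequality for $W$ to write $W(m_x^t,m_y^t)=W(p_x^t,p_y^t)+O(t^2)$ and then dividing by $t$. Your first step, propagating the hypothesis $P_t d(x_0,\cdot)<\infty$ from the single base point $x_0$ to arbitrary vertices $x$ and $y$ via $d(z,\cdot)\leq d(z,x_0)+d(x_0,\cdot)$, stochastic completeness and Lemma~\ref{lem:uniformFinitePt}, is a detail the paper leaves implicit but which is genuinely needed to apply the proposition at both endpoints, and you handle it correctly.
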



\begin{proof}
Let $x \neq y \in V$.
Due to the triangle inequality and Proposition~\ref{pro:PtdANDWpm}, we have
\[
W(m_x^t,m_y^t) = W(p_x^t,p_y^t) + O(t^2)
\]
as $t\to 0^+$.
By definition,
\begin{align*}
\kappa(x,y) = \lim_{t\to 0^+} \frac 1 t \left(1 - \frac{W(m_x^t,m_y^t)}{d(x,y)} \right) &=\lim_{t\to 0^+} \frac 1 t \left(1 - \frac{W(p_x^t,p_y^t) + O(t^2)}{d(x,y)} \right)\\
&=\lim_{t \to 0^+} \frac 1 t \left(1 - \frac{W(p_x^t,p_y^t)}{d(x,y)} \right).
\end{align*}
This finishes the proof.
\end{proof}
Since both stochastic completeness and $P_t d(x_0,\cdot)<\infty$ are implied by a lower Ricci curvature bound (see Theorem~\ref{thm:StochComplete} and Lemma~\ref{lem:Ptd}), we immediately obtain the following corollary.

\begin{corollary}\label{cor:MarkovChains}
Let $G=(V,w,m)$ be a graph with $Ric(G) \geq K$ for some $K \in \IR$. Then, for all $x \neq y$,
\[
\kappa(x,y) = \lim_{t \to 0^+} \frac 1 t \left(1 - \frac{W(p_x^t,p_y^t)}{d(x,y)} \right).
\]
\end{corollary}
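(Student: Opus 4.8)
The plan is to obtain this corollary as an immediate application of Theorem~\ref{thm:MarkovChains}. That theorem establishes exactly the claimed identity, but under two standing hypotheses: that $G$ is stochastically complete, and that $P_t d(x_0,\cdot) < \infty$ for some $x_0 \in V$ and some $t > 0$. Hence the whole task reduces to checking that both hypotheses follow from the single assumption $Ric(G) \geq K$.

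First I would verify stochastic completeness, which is precisely the content of Lemma~\ref{lem:StochComplete} under the assumption $Ric(G) \geq K$. (One could alternatively invoke Theorem~\ref{thm:StochComplete}~(i), since a constant lower bound decays no faster than $-C\log r$, but the direct citation is cleaner.) Next I would verify the finiteness $P_t d(x_0,\cdot) < \infty$ via Lemma~\ref{lem:Ptd}. The only point requiring care is that Lemma~\ref{lem:Ptd} is phrased for a lower bound $Ric(G) \geq -K$ with $K > 0$, so I would set $K' := \max(-K,1) > 0$; then $Ric(G) \geq K \geq -K'$ holds, and Lemma~\ref{lem:Ptd} yields $P_t d(x_0,\cdot) \leq e^{K't}\left(d(x_0,\cdot) + \Deg(x_0)/K'\right) < \infty$ for every $t > 0$ and every $x_0$.

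With both hypotheses in place, the conclusion of Theorem~\ref{thm:MarkovChains} applies verbatim. I do not expect any genuine obstacle here: this is a packaging result, and the only subtlety is the sign bookkeeping needed to meet the positivity requirement of Lemma~\ref{lem:Ptd}. The final proof should amount to a single sentence chaining the three cited results.
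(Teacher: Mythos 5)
Your proposal is correct and matches the paper's own proof, which likewise deduces stochastic completeness and $P_t d(x_0,\cdot)<\infty$ from the curvature bound (via Theorem~\ref{thm:StochComplete} and Lemma~\ref{lem:Ptd}) and then applies Theorem~\ref{thm:MarkovChains}. Your sign bookkeeping with $K':=\max(-K,1)$ is a sensible way to make the application of Lemma~\ref{lem:Ptd} explicit, but the argument is otherwise identical.
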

Combining with \cite[Theorem~9]{veysseire2012coarse}, we immediately obtain that $W(p_t^x,p_t^y) \leq e^{-Kt} d(x,y)$ whenever $Ric(G)\geq K$.
We remark that this gives an alternative method for proving Theorem~\ref{thm:gradientGraphs}.

\TOCstop

\subsection*{Acknowledgments}
F.M. wants to thank the German National Merit Foundation for financial support.
R.K.W. gratefully acknowledges financial support from PSC-CUNY Awards, jointly funded by the Professional Staff Congress and the City University of New York, and the Collaboration Grant for Mathematicians, funded by the Simons Foundation.
R.K.W. would also like to thank Fudan and Hokkaido Universities for their generous hospitality while parts of this work were completed.
Furthermore, both authors want to thank the Harvard University Center of Mathematical Sciences and Applications for their hospitality.

\printbibliography

Florentin M\"unch, \\
Department of Mathematics,
University of Potsdam, Potsdam, Germany\\
Currently: Center of Mathematical Sciences and Applications, Harvard University, Cambridge MA, USA \\
\texttt{chmuench@uni-potsdam.de}\\
\\
Rados{\l}aw K. Wojciechowski,\\
York College and the Graduate Center of the City University of New
York, New York, USA\\
\texttt{rwojciechowski@gc.cuny.edu}

\TOCstart

\end{document}